\documentclass[10pt]{article}
\usepackage{color}
\usepackage{amssymb}
\usepackage{amsthm,array,amssymb,amscd,amsfonts,latexsym, url}
\usepackage{amsmath}
\usepackage{verbatim}
\usepackage{tikz-cd}
\usepackage{enumitem}

\usepackage{MnSymbol}

\usepackage{hyperref}

\usepackage{longtable} 
\usepackage[center]{caption}
\setlength{\belowcaptionskip}{\baselineskip}
\usepackage{diagbox}

\usepackage[all]{xy}

\makeatletter
\def\th@remark{%
  \thm@headfont{\bfseries}%
  \normalfont 
  \thm@preskip\topsep \divide\thm@preskip\tw@
  \thm@postskip\thm@preskip
}
\makeatother

\newtheorem{theo}{Theorem}[section]
\newtheorem{thm}{Theorem}[section]
\newtheorem{prop}[theo]{Proposition}

\newtheorem{lemma}[theo]{Lemma}

\newtheorem{conj}[theo]{Conjecture}

\theoremstyle{remark}

\newtheorem{rmk}[theo]{Remark}
\newtheorem{example}[theo]{Example}

\voffset=-1in
\setlength{\hoffset}{1,5cm}
\setlength{\oddsidemargin}{0cm}
\setlength{\textheight}{23cm}
\setlength{\textwidth}{14cm}

\newcommand{\BC}{{\mathbb{C}}}

\newcommand{\BE}{{\mathbb{E}}}

\newcommand{\BQ}{{\mathbb{Q}}}
\newcommand{\BR}{{\mathbb{R}}}

\newcommand{\BZ}{{\mathbb{Z}}}

\newcommand{\CC}{{\mathcal C}}

\newcommand{\CO}{{\mathcal O}}

\newcommand{\pt}{{\mathsf{p}}}


\newcommand{\blangle}{\big\langle}
\newcommand{\brangle}{\big\rangle}

\newcommand{\Mbar}{{\overline M}}
\newcommand\ev{\operatorname{ev}}

\newcommand{\GW}{\mathsf{GW}}

\DeclareFontFamily{OT1}{rsfs}{}
\DeclareFontShape{OT1}{rsfs}{n}{it}{<-> rsfs10}{}
\DeclareMathAlphabet{\curly}{OT1}{rsfs}{n}{it}

\newcommand{\p}{\mathbb{P}}

\newcommand\End{\operatorname{End}}

\newcommand\wt{\operatorname{wt}}

\newcommand{\vir}{\mathsf{vir}}
\newcommand{\red}{\mathsf{red}}

\newcommand{\QMod}{\mathsf{QMod}}
\newcommand{\Mod}{\mathsf{Mod}}

\title{On the descendent Gromov--Witten theory of a K3 surface}
\author{Georg Oberdieck}

\date{}

\newfont{\gothic}{eufb10}
\begin{document}
\maketitle

\begin{abstract} We study the reduced descendent Gromov--Witten theory of K3 surfaces in primitive curve classes. We present a conjectural closed formula for the stationary theory, which generalizes the Bryan-Leung formula. We also prove a new recursion that allows to remove descendent insertions of $1$ in many instances.
Together this yields an efficient way to compute a large class of invariants (modulo the conjecture on the stationary part).
As a corollary we conjecture a surprising polynomial structure which underlies the Gromov--Witten invariants of the K3 surface.
 \end{abstract}

\setcounter{tocdepth}{1} 


\setcounter{section}{-1}

\section{Introduction}
\subsection{State of the art}
Let $S$ be a K3 surface and let $\beta \in H_2(S,\BZ)$ be an effective curve class.
The (reduced) descendent Gromov--Witten invariants of $S$ are defined by integrating over the
moduli space of $n$-marked genus $g$ degree $\beta$ stable maps:
\begin{equation} \label{GWinvariant}
\left\langle \tau_{k_1}(\gamma_1) \cdots \tau_{k_n}(\gamma_n) \right\rangle^{S}_{g,\beta}
=
\int_{[ \Mbar_{g,n}^{\circ}(S,\beta) ]^{\red}} \prod_i \ev_i^{\ast}(\gamma_i) \psi_i^{k_i},
\end{equation}
where $k_1, \ldots, k_n \geq 0$ and $\gamma_1, \ldots, \gamma_n \in H^{\ast}(S)$.
We refer to Section~\ref{subsec:GW theory of K3 definitions} for more details on the definition.
We say that the descendent invariant \eqref{GWinvariant} is:
\begin{itemize}[itemsep=0pt]
\item {\em stationary} if $\deg(\gamma_i) > 0$ for all $i$,
\item {\em primitive} if the curve class $\beta \in H_2(S,\BZ)$ is primitive.
\end{itemize}

For dimension reasons 
the invariant \eqref{GWinvariant} vanishes unless 
we have\footnote{Here $\deg_{\BC}(\gamma)$ is the complex cohomological degree of $\gamma$, that is $\gamma \in H^{2 \deg_{\BC}(\gamma)}(S)$.}
\[ g = \sum_{i=1}^{n} (k_i + \deg_{\BC}(\gamma_i) - 1). \]
Hence we fix $g$ by this constraint and often drop it from notation.

The most important conjecture about the Gromov--Witten theory of the K3 surface
says that the descendent invariants are completely determined by the primitive invariants.
We recall the conjecture.
Let $\pt \in H^4(S,\BZ)$ denote the class of a point.

\begin{conj}[{Multiple cover conjecture \cite[Conj. C2]{K3xE}}]
For every positive divisor $k|\beta$ let $S_k$ be a K3 surface and let $\varphi_k : H^2(S,\BR) \to H^2(S_k,\BR)$ be a real isometry such that $\varphi_k(\beta/k)$ is a primitive effective curve class. 
Extend $\varphi_k$ to an isomorphism $\varphi_k : H^{\ast}(S,\BR) \to H^{\ast}(S_k,\BR)$ by setting $\varphi_k(1) = 1$ and $\varphi_k(\pt) = \pt$.
Then we have:
\[
\left\langle \tau_{k_1}(\gamma_1) \cdots \tau_{k_n}(\gamma_n) \right\rangle^{S}_{g,\beta}
=
\sum_{k|\beta}
k^{2g-3+\sum_{i=1}^{n} \deg_{\BC}(\gamma_i)}
\left\langle \tau_{k_1}(\varphi_k(\gamma_1)) \cdots \tau_{k_n}(\varphi_k(\gamma_n)) \right\rangle^{S_k}_{g,\varphi_k(\beta/k)}.
\]
\end{conj}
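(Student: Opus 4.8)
The statement is a conjecture, so what follows is a proposed line of attack rather than a complete argument. The plan is first to reduce to the stationary case and then to transfer the problem to a three-fold where a genuine multiple-cover structure is available. Using the descendent recursion proved in this paper, I would eliminate every insertion $\tau_{k_i}(1)$ from both sides of the claimed identity; one then has to check that this recursion is compatible with the multiple-cover weights, i.e.\ that it intertwines the divisor sum on the left with the divisor sum on the right. Granting this, the conjecture is reduced to its stationary form, which is exactly where the conjectural closed formula of this paper lives: concretely, one wants to show that this closed formula has the predicted behaviour under $\beta \mapsto k\beta$.

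Next I would use deformation invariance of the reduced virtual class together with the Torelli theorem to place $S$ in a convenient family, for instance an elliptically fibered K3 surface with a section. The geometric origin of the weight $k^{2g-3+\sum_i \deg_{\BC}(\gamma_i)}$ should then be exhibited by passing to the Calabi--Yau three-fold $X = S \times E$, where $E$ is an elliptic curve: the reduced descendent invariants of $S$ can be related to the (non-reduced) descendent invariants of $X$ in classes $(\beta,d)$, and on $X$ there is no reduced class, so the full Gromov--Witten/stable pairs machinery applies. On the stable-pairs side, after degeneration a curve in the imprimitive class $k\beta'$ is supported on a thickening of a primitive curve; the contribution of such configurations should factorize, with the power of $k$ arising from the shift of holomorphic Euler characteristic and the dimension of the deformation spaces. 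Translating this factorization back through the correspondence and the reduction to $S$ should produce the sum over divisors $k \mid \beta$ with the stated exponent.

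The main obstacle is twofold. First, the descendent Gromov--Witten/stable pairs correspondence for $X = S \times E$ is not available in the generality required, and matching descendents of arbitrary classes on $S$ with natural insertions on $X$ is delicate. Second, and more seriously, even granting the correspondence the multiple-cover factorization on the sheaf side must be established directly: at present no localization or degeneration cleanly separates covers of different divisibility while keeping track of the descendents, and it is exactly the appearance of the specific exponent $2g-3+\sum_i \deg_{\BC}(\gamma_i)$---rather than some other power of $k$---that must be extracted from the virtual deformation theory. I expect this last point to be the crux, and a realistic attack would probably first settle the descendent-free case, recovering the imprimitive Katz--Klemm--Vafa formula, and then propagate the result through the recursion.
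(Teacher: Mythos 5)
This statement is the multiple cover conjecture, which the paper does not prove: it is quoted from \cite{K3xE} and the text explicitly records that it is known only in divisibility $2$ (Bae--B\"ulles) and ``remains wide open for higher divisibility.'' So there is no proof in the paper to compare yours against; what can be assessed is whether your proposed strategy is sound, and it contains at least one concrete step that fails as stated.

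The step that fails is the claim that you can ``eliminate every insertion $\tau_{k_i}(1)$ from both sides'' using the recursion of this paper. That recursion (Proposition~\ref{prop:Removing 1}, Theorem~\ref{thm:recursion}) is built entirely on the holomorphic anomaly equation, Theorem~\ref{thm:HAE}, which is established only for the generating series over the \emph{primitive} classes $B+hF$ on an elliptic K3; there is no analogue available for imprimitive classes, so the recursion cannot be run on the left-hand side of the multiple cover formula, which is exactly the imprimitive side. (Checking that such a recursion ``intertwines the divisor sums'' would anyway presuppose a statement at least as strong as the conjecture itself in the presence of $\tau_k(1)$ insertions.) The second half of your plan --- passing to $X=S\times E$, invoking a descendent GW/stable-pairs correspondence, and extracting the weight $k^{2g-3+\sum_i\deg_{\BC}(\gamma_i)}$ from a factorization on the sheaf side --- is essentially the heuristic that motivated the conjecture in \cite{K3xE} in the first place; you correctly identify that both the descendent correspondence for $S\times E$ and the multiple-cover factorization on the stable-pairs side are unavailable, but that means the proposal reduces the conjecture to two other open problems rather than proving it. As a research program this is a reasonable (and known) direction; as a proof it has no complete step beyond the deformation-invariance reductions of Theorem~\ref{thm:dependence}, which apply equally to both sides but do not touch the divisor sum.
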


The conjecture was proven by Bae and B\"ulles \cite{BB} when $\beta$ has divisibility $2$,
but remains wide open for higher divisibility.

Although the imprimitive invariants are difficult to understand, the situation for the primitive invariants is much better.
Indeed, Maulik, Pandharipande and Thomas provided in \cite{MPT} an algorithm which can determine all primitive invariants using a combination of tautological relations coming from the moduli space of curves and degenerations techniques. 
As a corollary they showed that 
the natural
 generating series of primitive invariants
are quasi-modular forms.
Further in \cite{HAE} it was shown using this algorithm that these quasi-modular forms satisfy a holomorphic anomaly equation. We review these results in Section~\ref{sec:Background}.

One could say that the story is finished here and the primitive invariants are completely determined. 
However, there are two problems:
First, the algorithm of \cite{MPT} is extremely complicated and increasingly slow when the genus grows.
We refer to work of Sendra \cite{Sendra} where this algorithm was implemented. Computations in his implementation are feasible only up to genus $g=3$.
Second, very few explicit formulas for the primitive invariants are known.
This is in strong contrast to the case of elliptic curves, where the Bloch-Okounkov formula explicitly evaluates all invariants 
in closed form \cite{OP, OP3, Pixton}.
Therefore, when it comes to actual computations,
the structure of the primitive invariants of the K3 surface is still very mysterious.
At this point, the only general formula for the descendent invariants
is the following beautiful
result of Bryan and Leung:\footnote{We restrict ourselves here to the pure descendent invariants.
There are more formulas known if one allows more general insertions, such as the Hodge classes $\lambda_i$. Most notable here is the Katz-Klemm-Vafa formula proven in \cite{PT_KKV}.
Arbitrary linear Hodge integrals with descendents
are better considered as part of the
Pandharipande-Thomas theory of $S \times \BC$.
Their explicit form is taken up in \cite{OS}.
For Gromov--Witten invariants of the K3 surface involving the double ramification cycle, see also \cite{vIOP}.}

For all $k \geq 2$ even, define the weight $k$ Eisenstein series
\[ G_k(q) = - \frac{B_k}{2 \cdot k} + \sum_{n \geq 1} \sum_{d|n} d^{k-1} q^n \]
(with $B_k$ the Bernoulli numbers) and the modular discriminant
\[
\Delta(q) = q \prod_{n \geq 1} (1-q^n)^{24}.
\]

\begin{thm}[Bryan-Leung, {\cite{BL}}] For primitive $\beta$ we have
\[
\left\langle \tau_{0}(\pt)^n \right\rangle^{S}_{\beta} = 
\mathrm{Coeff}_{q^{\beta^2/2}} \left[
\frac{1}{\Delta(q)} \left( q \frac{d}{dq} G_2(q) \right)^n \right].
\]
\end{thm}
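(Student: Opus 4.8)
The plan is to exploit the deformation invariance of the reduced Gromov-Witten invariants to replace $S$ by a convenient elliptically fibered K3 surface, and then to degenerate the count into a purely local, fiberwise problem. Concretely, since the reduced invariant $\langle \tau_0(\pt)^n \rangle^S_\beta$ depends on the primitive class $\beta$ only through the square $\beta^2$, I would take $\pi \colon S \to \BP^1$ to be a generic elliptic K3 surface with a section $\sigma$ of class $s$, fiber class $f$, and exactly $24$ nodal fibers, and set $\beta = s + d f$ with $d = \beta^2/2 + 1$. The dimension constraint forces the genus to be $g = n$ \emph{independently} of $d$, so for $d > n$ the genus-$n$ stable maps cannot map birationally onto an irreducible curve of arithmetic genus $d$. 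This mismatch is exactly what produces multiple-cover contributions supported on fibers, and it is the source of the infinite $q$-series on the right-hand side.

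Next I would impose the point conditions by placing the $n$ insertion points $p_1, \dots, p_n$ on $n$ distinct smooth fibers $F_1, \dots, F_n$. The heart of the argument is a rigidity statement: after moving within the twistor (hyperkähler) family of $S$, every stable map contributing to the reduced invariant has image equal to the section $\sigma$ together with components that are \emph{vertical}, i.e. contained in fibers of $\pi$. The reason is that the holomorphic symplectic form obstructs all horizontal deformations, so the reduced obstruction theory localizes the count onto configurations supported on $\sigma \cup \pi^{-1}(\text{points})$; the section is a rigid $(-2)$-curve and each marked point forces its fiber $F_i$ to appear in the image. I would make this precise using the deformation invariance of the reduced virtual class together with a dimension count showing that non-vertical configurations do not survive in the generic member of the family.

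With the count reduced to vertical configurations, the generating series in $q$ factorizes into independent local contributions, one per relevant fiber. Each of the $24$ nodal fibers is an arithmetic-genus-one rational curve, and summing the multiple-cover contributions of all degrees over such a fiber produces a factor $\prod_{m \geq 1}(1-q^m)^{-1}$; taking the product over all $24$ nodal fibers and incorporating the degree shift coming from the section class yields precisely $1/\Delta(q)$. Each of the $n$ marked fibers carries a single point condition, and the corresponding local count -- essentially the stationary theory of the elliptic fiber with one point insertion -- assembles into the factor $q \frac{d}{dq} G_2(q)$. Multiplying these contributions and extracting the coefficient of $q^{\beta^2/2} = q^{d-1}$ gives the claimed formula.

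The main obstacle is the rigidity and localization step of the second paragraph: establishing rigorously that only section-plus-vertical configurations contribute, and then bookkeeping the multiple-cover contributions over the nodal and marked fibers with the correct automorphism factors and genus accounting, so that the discriminant and Eisenstein factors emerge exactly. The remaining ingredients -- deformation invariance and the extraction of a single $q$-coefficient -- are comparatively routine once this local analysis is in place.
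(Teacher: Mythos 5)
The paper does not prove this statement: it is quoted verbatim from Bryan--Leung \cite{BL}, so there is no internal proof to compare against. Your proposal is, in outline, exactly the original Bryan--Leung strategy (deform to an elliptic K3 with section, reduce to section-plus-vertical configurations, and multiply local contributions of fibers), so the approach is the right one. Two remarks on the outline itself. First, the rigidity of the image is cleaner than you make it sound and does not require moving in the twistor family: since $\beta\cdot f=(s+df)\cdot f=1$, every stable map in class $s+df$ has exactly one horizontal component, which is a section, and for a K3 whose Picard lattice is spanned by $s,f$ the only effective section class is $s$ itself (a class $s+kf$ with square $-2$ forces $k=0$); hence every image curve is $\sigma$ union vertical components, and the $n$ generic point conditions force the $n$ marked smooth fibers to appear. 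The twistor family (equivalently, the reduced class) enters only to make the invariant nonzero and to kill the contributions of the smooth \emph{unmarked} fibers, a vanishing your bookkeeping omits but which is needed for the product to close up.

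Second, and more seriously as a matter of completeness: the two steps you flag as ``the main obstacle'' --- the localization of the reduced virtual class onto these configurations, and the local multiple-cover computations showing that each nodal fiber contributes $\prod_{m\geq 1}(1-q^m)^{-1}$ and each marked fiber contributes $\sum_{a\geq 1}a\sigma_1(a)q^a=q\frac{d}{dq}G_2(q)$ --- are not side issues but the entire mathematical content of the theorem; as written the proposal is a correct plan rather than a proof. If you want to carry it out, those local contributions are computed in \cite{BL} by identifying the moduli of vertical components over a nodal fiber with configurations counted by partitions, and the marked-fiber factor by a separate analysis of maps to a smooth elliptic fiber through a fixed point; alternatively the modern route deduces the statement from the degeneration/localization package of \cite{MPT}.
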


Here $\mathrm{Coeff}_{q^m}(f)$ stands for the $q^m$-coefficient of a Laurent series $f$.

\subsection{A conjectural formula for the stationary theory}
The first result of this paper is a conjectural formula
for the stationary primitive invariants of the K3 surface,
which will generalize the Bryan-Leung evaluation.

To state the formula we use the Laurent expansion of the Weierstra{\ss} elliptic function $\wp(z)$
around the origin, which reads
\[ \wp(z) = \frac{1}{z^2} + 2 \sum_{k \geq 4} G_{k}(q) \frac{z^{k-2}}{(k-2)!}, \]
and where we have set $G_k=0$ for $k$ odd. 
For all $k \geq 0$ define the series
\begin{align}
A_k(q) & = \frac{(-1)^k}{(2k+1)!!} \mathrm{Res}_{z=0}\left[ (\wp(z) - 4 G_2)^{k+\frac{1}{2}} \right]  \label{A} \\
B_k(q) & = \frac{(-1)^k}{(2k+3)!!} \mathrm{Res}_{z=0}\left[ (\wp(z) - 4 G_2)^{k+\frac{3}{2}} (\wp + 2 G_2 ) \right] \label{B}
\end{align}
and for $k, \ell \geq 0$ the series
\begin{multline}
C_{k \ell}(q) = \frac{(-1)^{k+\ell-1}}{(2k+1)!! (2\ell+1)!!} \\
\cdot \mathrm{Res}_{z_1=0} \mathrm{Res}_{z_2=0}
\left[ (\wp(z_1) - 4 G_2 )^{k+\frac{1}{2}} (\wp(z_2) - 4 G_2)^{\ell+\frac{1}{2}} ( \wp(z_1-z_2) + 2 G_2 ) \right]. \label{C}
\end{multline}
The Weierstra{\ss} elliptic function $\wp(z)$ is taken above as a formal power series in $z$ with coefficients quasi-modular forms (see Section~\ref{subsec:quasi modular forms}),
so that
\[ \wp(z) - 4 G_2 = \frac{1}{z^{2}} - 4 G_{2} + G_{4}z^{2} + O(z^{4}). \]
Its square root is then computed formally, as in
\[
(\wp - 4 G_2)^{\frac{1}{2}} = \frac{1}{z} - 2 G_{2}z + \left(-2 G_{2}^{2} + \frac{1}{2} G_{4}\right)z^{3} + O(z^{4}).
\]
We obtain the well-defined $z$-series with quasi-modular coefficients:
\[
(\wp - 4 G_2)^{k+\frac{1}{2}} := (\wp(z) - 4 G_2)^k (\wp - 4 G_2)^{\frac{1}{2}}
\]
Taking the residue at $z=0$ means simply taking the $z^{-1}$ coefficient.
The double factorial stands for the product of odd factors:
\[ (2k+1)!! = \frac{(2k+1)!}{2^k \cdot k!} = (2k+1) (2k-1) \cdots 3 \cdot 1. \]
For example,
\begin{gather*}
A_0  =  1, \quad A_1  =  2 G_{2}, \quad A_2  =  2 G_{2}^{2} + \frac{1}{6} G_{4}, \quad \ldots  \\
B_0 = -2 G_{2}^{2} + \frac{5}{6} G_{4}, \quad B_1  =  -\frac{8}{3} G_{2}^{3} + \frac{4}{3} G_{2} G_{4} - \frac{7}{360} G_{6}, \quad \ldots \\
C_{0,0} = 0, \quad C_{1,0} = B_0, \quad C_{1,1} = -\frac{16}{3} G_{2}^{3} + \frac{10}{3} G_{2} G_{4} - \frac{7}{72} G_{6}, \quad \ldots .
\end{gather*}

Given cohomology classes $\gamma_0, \gamma_1, \gamma_2, \ldots \in H^{\ast}(S)$
define the partition function
\begin{align*} Z_{\beta}(\gamma_0, \gamma_1, \ldots ) 
& := \left\langle \exp\left( \sum_{k \geq 0} \tau_k(\gamma_k) \right) \right\rangle^{S}_{g,\beta} \\
& =
\sum_{m_0, m_1, m_2, \ldots \geq 0}
\frac{1}{m_0! m_1! m_2! \cdots }
\Big\langle \tau_{0}(\gamma_0)^{m_0} \tau_{1}(\gamma_1)^{m_1} \tau_2(\gamma_2)^{m_2} \cdots  \Big\rangle^{S}_{\beta}.
\end{align*}
The partition function encodes all Gromov--Witten invariants of $S$ in class $\beta$.\footnote{
Concretely, choose a basis $(e_a)_{a=0}^{23}$ of $H^{\ast}(S)$, let $t_{a,k}$ be formal variables and consider the classes
$\gamma_k = \sum_{a=0}^{23} t_{a,k} e_{a}$. Then we can extract
Gromov--Witten series from the partition function $Z_{\beta}$ by the rule:
\[
\left\langle \tau_{k_1}(e_{a_1}) \cdots \tau_{k_n}(e_{a_n}) \right\rangle^S_{\beta} = 
\left( \frac{d}{d t_{a_1,k_1}} \cdots \frac{d}{dt_{a_n,k_n}} Z_{\beta}(\gamma_1(t), \gamma_2(t), \ldots, ) \right)\Bigg|_{t=0}.
\]}

Let $(\gamma_1, \gamma_2) = \int_{S} \gamma_1 \cup \gamma_2$ denote the intersection pairing on $H^{\ast}(S)$.

The following determines all primitive stationary invariants.

\begin{conj} \label{conj:GW stationary} Let $\beta \in H_2(S,\BZ)$ be primitive and assume that $\deg(\gamma_i) > 0$ for all $i$. Then:
\[ Z_{\beta}(\gamma_0, \gamma_1, \ldots ) = 
\mathrm{Coeff}_{\beta^2/2}\left[ 
\frac{1}{\Delta(q)}
\exp\left( 
\sum_{k \geq 0} (\gamma_k, \beta) A_k(q) 
+ \sum_{k \geq 0} (\gamma_k, 1) B_k(q)
+ \frac{1}{2} \sum_{k,\ell \geq 0} (\gamma_k \cdot \gamma_{\ell}) C_{k \ell}(q) \right)
\right].
\]
\end{conj}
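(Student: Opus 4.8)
The plan is to prove the conjecture by extending the degeneration argument of Bryan--Leung from the point counts $\langle \tau_0(\pt)^n\rangle$ to arbitrary stationary descendents. First I would invoke deformation invariance: since reduced invariants in primitive classes depend only on $\beta^2$, I may replace $S$ by an elliptically fibered K3 surface $\pi\colon S\to\BP^1$ with a section $B$ and $24$ nodal fibers, and take $\beta=B+dF$ with $F$ the fiber class, so that $\beta^2/2=d-1$ is the target $q$-coefficient. I would then represent each stationary insertion $\tau_k(\gamma)$ by a geometric cycle adapted to the fibration --- divisor classes by fiber components and the section, and the point class $\pt$ by a point on a chosen generic fiber --- so that the incidence conditions pin the marked points to specified fibers.

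The second step is to analyze the moduli space exactly as in Bryan--Leung. The maps contributing to the reduced invariant are rigid ``comb'' configurations consisting of a single copy of the section $B$ together with genus-producing fiber components supported over the special points of the base: the $24$ nodal fibers and the finitely many fibers carrying insertions. The contribution of the section, summed over the distribution of fiber degrees among the $24$ nodal fibers, produces the universal prefactor $1/\Delta(q)=q^{-1}\prod_{n\ge 1}(1-q^n)^{-24}$. The insertions then contribute local factors supported on the relevant fibers, and the connected-versus-disconnected bookkeeping is what turns these local factors into the exponential on the right-hand side: insertions on distinct fibers multiply and exponentiate, while two divisor insertions forced onto a common fiber produce the connected two-point term.

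The crux --- and the reason this remains a conjecture --- is the exact evaluation of the local fiberwise descendent contributions. One must show that a single divisor insertion $\tau_k(\gamma)$ contributes $(\gamma,\beta)A_k(q)$ with $A_k$ as in \eqref{A}, a point insertion $\tau_k(\gamma)$ contributes $(\gamma,1)B_k(q)$ as in \eqref{B}, and the interaction of two divisor insertions contributes $(\gamma_k\cdot\gamma_\ell)C_{k\ell}(q)$ as in \eqref{C}. Here the half-integer powers $(\wp-4G_2)^{k+\frac12}$ should encode the $\psi^k$-weighting together with the genus of the fiber component, and the variable $z$ should record the position of the marked point along the elliptic fiber, so that the residue at $z=0$ extracts the contribution as the point degenerates toward the node. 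As a consistency check, the single-point case $k=0$ must reduce to $B_0=q\frac{d}{dq}G_2$, recovering the Bryan--Leung evaluation.

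I expect this local computation to be the main obstacle. The cleanest route is probably to pass to the Gromov--Witten / stable pairs theory of $S\times E$, which governs the descendents of $S$ (as noted in the introduction) and in which the Weierstra{\ss} function $\wp$ arises naturally as the genus-one propagator on the Fock space. Establishing the precise dictionary between the fiberwise descendent integrals and the $\wp$-residues \eqref{A}--\eqref{C} --- in particular the appearance of the shift by $4G_2$ and the factor $\wp(z_1-z_2)+2G_2$ in the interaction term --- is the heart of the problem, and is exactly where the quasi-modularity and the holomorphic anomaly structure would have to be matched against the geometry.
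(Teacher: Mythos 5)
The statement you are trying to prove is Conjecture~\ref{conj:GW stationary}; the paper offers no proof of it, so there is nothing to compare your argument against line by line. What matters is whether your proposal actually closes the gap, and it does not: you yourself concede that ``the exact evaluation of the local fiberwise descendent contributions'' is ``the main obstacle'' and is left undone. That evaluation is not a technical detail to be deferred --- it is the entire quantitative content of the conjecture. Without a derivation of why a divisor insertion contributes $(\gamma,\beta)A_k$, a point insertion $(\gamma,1)B_k$, and a pair of divisor insertions $(\gamma_k\cdot\gamma_\ell)C_{k\ell}$ with the specific residues of half-integer powers of $\wp-4G_2$, what you have written is a plausibility narrative, not a proof.

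Two further points of calibration against what the paper actually establishes. First, the multiplicative (exponential) structure you attribute to ``connected-versus-disconnected bookkeeping'' is, for insertions of $F$, $\pt$, and classes orthogonal to the hyperbolic lattice, already a theorem: Lemma~\ref{lemma:splitting formula}, via the standard degeneration argument of \cite{MPT}, shows the normalized correlators factor into one-point (and two-point, for the orthogonal classes) pieces. So that part of your outline is sound but not new; the open question is purely the identification of the resulting universal series with \eqref{A}, \eqref{B}, \eqref{C}. Second, the evidence the paper does assemble for that identification is of a different nature from your proposed Bryan--Leung-style local analysis: Section~\ref{sec:polynomial behaviour} characterizes $A_k$, $B_k$, $C_{k\ell}$ axiomatically by their weight, their behaviour under $\frac{d}{dG_2}$ (matching the holomorphic anomaly equation of Theorem~\ref{thm:HAE}), and a partial polynomiality of Fourier coefficients, so that e.g.\ Theorem~\ref{thm:Point insertions} reduces the point-insertion case of the conjecture to the polynomiality statement of Conjecture~\ref{conj:polynomiality}. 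If you want to make progress, either route --- computing the fiberwise local contributions directly (say via the theory of $S\times\BC$ or $S\times E$ as you suggest), or verifying the polynomiality that the characterization theorems show to be equivalent --- would need to be carried out in full; as it stands your write-up proves neither.
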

\vspace{8pt}

For example, we obtain the full evaluation of the descendents of point classes:
\[
\left\langle \prod_{i=1}^{n} \tau_{k_i}(\pt) \right\rangle^S_{\beta}
=
\mathrm{Coeff}_{q^{\beta^2/2}} \left[ \frac{B_{k_1}(q) \cdots B_{k_n}(q)}{\Delta(q)} \right].
\]
This generalizes the Bryan-Leung formula by the simple observation:
\[
B_0 = q \frac{d}{dq} G_2 
\]
If $F \in H^2(S)$ is a class satisfying $F^2=0$ and $\beta \cdot F=1$ we obtain
\[
\left\langle \prod_{i=1}^{n} \tau_{k_i}(F) \right\rangle^S_{\beta}
=
\mathrm{Coeff}_{q^{\beta^2/2}} \left[ \frac{A_{k_1}(q) \cdots A_{k_n}(q)}{\Delta(q)} \right].
\]
For $\alpha, \alpha' \in H^2(S)$ with $\alpha \cdot \beta = \alpha' \cdot \beta = 0$ we get:
\[
\blangle \tau_k(\alpha) \tau_{\ell}(\alpha') \brangle^S_{\beta} = \mathrm{Coeff}_{\beta^2/2} (\alpha, \alpha') C_{k \ell}(q) \Delta(q)^{-1}.
\]

The structure of the conjecture is inspired by the structure of multiplicative genera of
 the Hilbert schemes of points of a surface, as given in \cite{EGL}.

\subsection{Descendents of $1$}
For elliptic curves there are explicit rules (called Virasoro constraints) which
recursively remove the descendents insertions $\tau_k(1)$ from the Gromov--Witten bracket \cite{OP3}.
This means that for elliptic curves, the stationary theory determines all Gromov--Witten invariants,
see \cite{Pixton} for explicit formulas.
Virasoro constraints have been conjectured to hold for any smooth projective variety \cite{RahulICM}.
However, for K3 surfaces we use reduced Gromov--Witten theory which is non-standard.
One can check that the usual Virasoro constraints do not hold in this case.
A modified formulation of Virasoro constraints for reduced invariants is not known currently, not even on a conjectural level. Hence currently it is not clear what the dependence on $\tau_k(1)$ factors should be for K3 surfaces.

In this paper we will explain a simple trick that still allows us to gain information
about descendents of $1$ for a large class of invariants. The trick is based on the {\em holomorphic anomaly equation} 
and yields a recursion.
We start with a basic example:

\begin{thm} \label{thm:Example 1} Let $\beta$ be primitive. For $k \geq 2$ we have
\[ \blangle \tau_k(1) \brangle^S_{\beta} = \blangle \tau_{k-3}(\pt) \brangle^S_{\beta} + 2 (k-2) \blangle \tau_{k-1}(F) \brangle^S_{\beta} \]
where $F \in H^2(S)$ is any class such that $\beta \cdot F = 1$ and $F^2=0$.
\end{thm}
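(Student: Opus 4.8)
The plan is to pass to generating series and argue with quasi-modular forms. For a collection of insertions write $\mathsf{F}(\tau_{k_1}(\gamma_1)\cdots\tau_{k_n}(\gamma_n)) := \sum_{\beta} \langle \tau_{k_1}(\gamma_1)\cdots\tau_{k_n}(\gamma_n)\rangle^S_{\beta}\, q^{\beta^2/2}$, where $\beta$ runs over the primitive classes of a fixed deformation type; since a primitive invariant depends only on $\beta^2$, this is a well-defined $q$-series, and by \cite{MPT} it is a quasi-modular form. A short weight count (using that $\wp$ and $G_2$ carry weight $2$ and $z$ carries weight $-1$, so that $A_k$ and $B_k$ have weights $2k$ and $2k+4$) shows that all three series $\mathsf{F}(\tau_k(1))$, $\mathsf{F}(\tau_{k-3}(\pt))$ and $\mathsf{F}(\tau_{k-1}(F))$ are quasi-modular of the same weight $2k-14$, as they must be for the asserted identity to hold. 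I also record the numerology $2(k-2)=2g-2$, where $g=k-1$ is the genus fixed by the dimension constraint for $\tau_k(1)$; the appearance of the dilaton/anomaly factor $2g-2$ is a first hint that the second term on the right is produced by the holomorphic anomaly.

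The engine of the proof is the holomorphic anomaly equation of \cite{HAE}, which computes the derivative $\tfrac{d}{dG_2}\mathsf{F}(\cdots)$ of any such series through a genus-reduced bracket: one splits off a handle, inserting two new marked points that carry the K\"unneth factors of the diagonal of $S$ and redistributing the $\psi$-powers. First I would apply this to $\mathsf{F}(\tau_k(1))$. Because the inserted class is the unit, the comparison of $\psi$-classes under the gluing and forgetful maps converts the single descendent $\tau_k(1)$ into descendents of the K\"unneth factors: I expect the component $1\otimes\pt$ of the diagonal to produce the genus-$(g{-}1)$ term $\tau_{k-3}(\pt)$ (the shift $k\to k-3$ matching the degree raise by the point class together with the two $\psi$'s absorbed at the node), and the $H^2\otimes H^2$ component, paired against $\beta$ through the class $F$ with $F\cdot\beta=1$, to produce the same-genus term $\tau_{k-1}(F)$, with the factor $2(g-1)=2(k-2)$ emerging from the internal coefficient of the anomaly operator. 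Carrying this out, and performing the analogous computation for the two stationary series on the right, I would verify that $\tfrac{d}{dG_2}$ of the two sides of the claimed identity agree. The bookkeeping that yields exactly the shifts $k-3$, $k-1$ and the constant $2(k-2)$ is the computational heart of this step.

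It then remains to match the genuinely modular parts. A quasi-modular form is determined by its $G_2$-derivative only up to a modular form of the same weight, so matching $\tfrac{d}{dG_2}$ shows only that $D := \mathsf{F}(\tau_k(1)) - \mathsf{F}(\tau_{k-3}(\pt)) - 2(k-2)\,\mathsf{F}(\tau_{k-1}(F))$ is modular of weight $2k-14$. Since each series has at worst a simple pole at $q=0$ coming from the $1/\Delta$ factor, $\Delta\cdot D$ lies in the finite-dimensional space $M_{2k-2}$ of holomorphic modular forms, so it suffices to check the identity for the polar term and for finitely many small values of $\beta^2$, where the invariants are directly accessible (the $q^{-1}$-coefficient, for instance, is governed by low-genus data computable by Bryan--Leung). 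Fixing this modular part — equivalently, justifying the exact constant $2(k-2)$ in front of the same-genus term, which the $G_2$-derivative leaves undetermined — is the step I expect to be the main obstacle. Here the reformulation $\langle \tau_k(1)\rangle^S_\beta = \int_{[\overline M_{g}(S,\beta)]^{\red}} \kappa_{k-1}$ obtained by pushing $\psi_1^k$ forward along the forgetful map, together with the base case $k=2$ (where the claim reduces to the vanishing $\langle\tau_2(1)\rangle^S_\beta=0$), supplies the extra input I would use to close the gap; alternatively, if the holomorphic anomaly equation of \cite{HAE} is invoked together with its boundary condition at the cusp $q\to 0$, that boundary value pins the modular part directly.
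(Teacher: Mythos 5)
Your overall instinct---that the holomorphic anomaly equation is the engine---is right, but the way you deploy it is not the paper's, and as set up it does not close. The paper never applies the anomaly equation to $\langle\tau_k(1)\rangle^{\GW}$ itself. If you do (Theorem~\ref{thm:HAE}), almost everything drops out: $\pi^{\ast}\pi_{\ast}(1)=0$, $(1,F)=0$, and the $\sigma$-term needs two insertions, so you are left with $\tfrac{d}{dG_2}\langle\tau_k(1)\rangle^{\GW}=2\langle\tau_k(1)\tau_0(1)\tau_0(F)\rangle^{\GW}$, which after the string and divisor equations still contains a descendent of $1$. The single-insertion ``diagonal splitting'' you describe, producing $\tau_{k-3}(\pt)$ and $\tau_{k-1}(F)$ with coefficient $2(k-2)$ directly from $\tau_k(1)$, is not a term of the anomaly equation in this paper. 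The actual mechanism (Section~\ref{sec:descendents of 1}) is: introduce the auxiliary series $I_W=\langle\tau_{k-1}(W)\rangle^{\GW}$, whose anomaly equation contains $-2\langle\tau_k(\pi^{\ast}\pi_{\ast}W)\rangle^{\GW}=-2\langle\tau_k(1)\rangle^{\GW}$; then compute $\tfrac{d}{dG_2}I_W$ a \emph{second} way by first using deformation invariance (Theorem~\ref{thm:dependence} / Proposition~\ref{prop:Removing W}) to write $I_W=D_q\langle\tau_{k-1}(F)\rangle^{\GW}+\langle\tau_{k-1}(\alpha_1)\rangle^{\GW}$ and then applying the commutator $[\tfrac{d}{dG_2},D_q]=-2\wt$ together with the anomaly equation for the stationary series. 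Equating the two computations gives a linear equation that determines $\langle\tau_k(1)\rangle^{\GW}$ \emph{exactly}: the unknown sits inside the $G_2$-derivative of a different, known series, so there is no modular ambiguity to resolve. Your proposal contains no analogue of this $W$-versus-$(dF+\alpha_1)$ comparison, which is the one genuinely new idea in the proof.

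This matters because the step you flag as ``the main obstacle''---pinning down the modular part---is a real gap in your route, not a formality. Matching $G_2$-derivatives leaves $\Delta\cdot D$ undetermined in $M_{2k-2}$, whose dimension grows like $k/12$; the coefficients you would need to check are the invariants $\langle\tau_k(1)\rangle^S_{\beta}$ for $\beta^2/2$ up to roughly $k/12$, which are exactly the quantities being computed and are not covered by the Bryan--Leung formula (that formula only evaluates $\tau_0(\pt)^n$; the $\beta^2=-2$ case is due to Maulik). Likewise the proposed base case $\langle\tau_2(1)\rangle^S_{\beta}=0$ is asserted, not proved, and ``the boundary condition at the cusp'' is not part of Theorem~\ref{thm:HAE} as stated. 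So the argument as written establishes at best the identity modulo an undetermined modular form of growing dimension, and the inputs offered to remove that ambiguity are either unavailable or circular.
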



A more complicated example is:

\begin{thm} \label{thm:Example 2}
Let $\beta$ be primitive. For $k \geq 2$ and $\ell \geq 0$ we have
\begin{align*}
\left\langle \tau_k(1) \tau_{\ell}(\pt) \right\rangle
& = \blangle \tau_{k - 3}(\pt) \tau_{\ell}(\pt) \brangle 
+ \blangle \tau_{k-2}(\pt) \tau_{\ell-1}(\pt) \brangle  \\
& + (2k+2\ell) \blangle \tau_{k-1}(F) \tau_{\ell}(\pt) \brangle - \blangle \tau_{k-1}(\alpha_1) \tau_{\ell+1}(\alpha_2) \brangle
\end{align*} 
where the class $F \in H^2(S,\BQ)$ is as before and $\alpha_1, \alpha_2 \in H^2(S,\BQ)$ are any classes orthogonal to $\beta$ and $F$ satisfying $\alpha_i^2=0$ and $\alpha_1 \cdot \alpha_2=1$.
\end{thm}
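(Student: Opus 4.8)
The plan is to deduce the identity from the holomorphic anomaly equation, exploiting the fact that the unit insertion $\tau_k(1)$ enters the anomaly through the K\"unneth diagonal. Recall from the discussion preceding Theorem~\ref{thm:Example 1} that the primitive descendent series are quasi-modular and satisfy a holomorphic anomaly equation: the formal derivative $\partial/\partial G_2$ of any primitive bracket is computed geometrically as a genus-reduction term, obtained by gluing two new markings at a non-separating node against the diagonal $\Delta_S = 1 \otimes \pt + \pt \otimes 1 + \sum_a \theta_a \otimes \theta^a$ (for a basis $\theta_a$ of $H^2(S)$); since $\beta$ is primitive, the separating contributions vanish. The components $1 \otimes \pt$ and $\pt \otimes 1$ are exactly what generate descendents of the unit, so applying the anomaly equation to $\blangle \tau_k(1) \tau_{\ell}(\pt) \brangle$ produces, after reducing the $\tau_0$-insertions coming from the diagonal by the string and divisor equations, an expression for $\partial_{G_2} \blangle \tau_k(1)\tau_{\ell}(\pt)\brangle$ in terms of lower unit- and stationary two-point brackets.

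The second ingredient is an independent, purely algebraic evaluation of $\partial/\partial G_2$ on the stationary side. The key observation is that $\wp(z)$ contains no $G_2$ in its $z$-expansion, so that $\partial_{G_2}(\wp - 4 G_2) = -4$ and $\partial_{G_2}(\wp + 2 G_2) = 2$. Differentiating the residue formulas \eqref{A}, \eqref{B} and \eqref{C} under the $\Res$ and simplifying the double factorials gives the clean recursions
\[ \frac{\partial A_k}{\partial G_2} = 2 A_{k-1}, \qquad \frac{\partial B_k}{\partial G_2} = 2 B_{k-1} - 2 A_{k+1}, \qquad \frac{\partial C_{k\ell}}{\partial G_2} = 2 C_{k-1,\ell} + 2 C_{k,\ell-1} - 2 A_k A_{\ell}. \]
These are the algebraic shadow of the anomaly equation for the $F$- and $\pt$-descendents (using $\partial_{G_2}\Delta = 0$), e.g. $\partial_{G_2}\blangle \tau_a(F)\brangle = 2 \blangle \tau_{a-1}(F)\brangle$ and $\partial_{G_2}\blangle \tau_a(\pt)\brangle = 2\blangle \tau_{a-1}(\pt)\brangle - 2 \blangle \tau_{a+1}(F)\brangle$, and likewise for the stationary two-point series governing the right-hand side of the theorem.

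With these two computations of $\partial/\partial G_2$ in hand I would argue by induction on $k + \ell$. Let $E_{k,\ell}$ denote the difference between the two sides of the asserted identity. Feeding the geometric anomaly equation for $\blangle \tau_k(1)\tau_{\ell}(\pt)\brangle$ into the left-hand side, and the algebraic recursions above into the right-hand side, the two computations of $\partial/\partial G_2$ match term by term provided the correction coefficients produced by the diagonal are the expected ones; this yields a homogeneous recursion of the schematic form $\partial_{G_2} E_{k,\ell} = 2 E_{k-1,\ell} + 2 E_{k,\ell-1}$. By the inductive hypothesis the right-hand side vanishes, so $E_{k,\ell}$ has vanishing $G_2$-derivative, i.e. it is an honest modular form of its weight; a finite check of the lowest coefficients $\mathrm{Coeff}_{q^{\beta^2/2}}$ (equivalently, the low-genus base cases, together with Theorem~\ref{thm:Example 1} for the one-pointed specialization) then forces $E_{k,\ell} = 0$.

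The main obstacle is the precise geometric bookkeeping of the genus-reduction term for the unit insertion. One must track the $\psi$-classes at the gluing node and the three components of $\Delta_S$ carefully enough to pin down the exact correction coefficients --- in particular the factor $2k + 2\ell$ multiplying $\blangle \tau_{k-1}(F)\tau_{\ell}(\pt)\brangle$ and the precise sign of the $\blangle \tau_{k-1}(\alpha_1)\tau_{\ell+1}(\alpha_2)\brangle$ term coming from the $H^2$-part $\sum_a \theta_a \otimes \theta^a$ --- and justify that the string and divisor equations may be applied to the diagonal insertions in the reduced theory. A secondary point is to confirm that $E_{k,\ell}$, once shown to be modular, lies in a space pinned down by the finitely many base cases, so that the vanishing argument closes the induction.
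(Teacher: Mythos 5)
There is a genuine gap, and it is at the heart of the argument. Your plan is to apply the holomorphic anomaly equation directly to $\blangle \tau_k(1)\tau_\ell(\pt)\brangle$ and read off a stationary expression. But the reduced K3 anomaly equation (Theorem~\ref{thm:HAE}) is \emph{inert} on unit insertions: it is not the naive diagonal-splitting $\Delta_S = 1\otimes\pt+\pt\otimes 1+\sum_a\theta_a\otimes\theta^a$ you describe, but involves the elliptic fibration through $\pi^{\ast}\pi_{\ast}$, the extra insertion $\tau_0(1)\tau_0(F)$, and the operator $\sigma$, which kills anything in $H^0$. Applied to $\blangle\tau_k(1)\tau_\ell(\pt)\brangle^{\GW}$ it yields only
\[
\frac{d}{dG_2}\blangle\tau_k(1)\tau_\ell(\pt)\brangle^{\GW} = 2\blangle\tau_k(1)\tau_\ell(\pt)\tau_0(1)\tau_0(F)\brangle^{\GW} - 2\blangle\tau_k(1)\tau_{\ell+1}(F)\brangle^{\GW},
\]
since $\pi^{\ast}\pi_{\ast}(1)=0$, $(1,F)=(\pt,F)=0$ and $\sigma(1\boxtimes\pt)=0$: the $\tau_k(1)$ insertion survives in every term, so no recursion in the number of unit insertions is obtained. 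The mechanism the paper actually uses is absent from your proposal: one introduces the auxiliary series $I_W=\blangle\tau_k(W)\tau_\ell(\pt)\brangle^{\GW}$ and computes $\frac{d}{dG_2}I_W$ in two ways. Applying the anomaly equation directly produces $-2\blangle\tau_{k+1}(1)\tau_\ell(\pt)\brangle$ because $\pi^{\ast}\pi_{\ast}(W)=1$ --- this is the only place a new unit insertion is created. Alternatively, deformation invariance (Theorem~\ref{thm:dependence} / Proposition~\ref{prop:Removing W}) lets one replace $W$ by $dF+\alpha_1$, turning $I_W$ into $D_q\blangle\tau_k(F)\tau_\ell(\pt)\brangle^{\GW}+\blangle\tau_k(\alpha_1)\tau_\ell(\pt)\brangle^{\GW}$; differentiating this and using $[\frac{d}{dG_2},D_q]=-2\wt$ is precisely what produces the coefficient $2k+2\ell$ (the weight of the series) and the $\alpha_1,\alpha_2$ term. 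Equating the two computations and solving for the unit-insertion term gives the theorem directly, with no induction and no modularity argument. Your proposal has no source for the $D_q$, hence no source for the weight factor, which is why you are reduced to hoping that "the correction coefficients produced by the diagonal are the expected ones."

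A secondary problem is the closing step of your induction. Even granting a homogeneous relation $\partial_{G_2}E_{k,\ell}=2E_{k-1,\ell}+2E_{k,\ell-1}$, concluding $E_{k,\ell}=0$ from modularity requires checking the first $\lfloor w/12\rfloor+1$ Fourier coefficients of a weight-$w$ form with $w$ growing linearly in $k+\ell$ (cf.\ Lemma~\ref{Modconstraint}); this is not a uniformly finite set of base cases, so "a finite check of the lowest coefficients" does not close the argument as stated.
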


\begin{rmk}
The formulas in Theorem~\ref{thm:Example 1} and~\ref{thm:Example 2} also hold for $k \in \{ 0,1 \}$ if we use the convention $\tau_{k}(\gamma) = \delta_{k+2,0} \int_{S} \gamma$ for all $k<0$. \\
\end{rmk}

An even more general example can be found in Theorem~\ref{thm:more general example of recursion} below.
Our recursion applies to a very large class of descendent invariants,
but does not seem to give nice formulas in general.
Hence for now we just formulate the existence of the recursion
and its scope
and refer to Section~\ref{sec:descendents of 1} for details.

\begin{thm} \label{thm:recursion}
For $\gamma_1, \ldots, \gamma_n \in H^{\ast}(S)$ let $r=|\{ i : \deg_{\BC}(\gamma_i)=1 \}|$. Assume that there are
\[ \alpha_1^{(i)}, \alpha_2^{(i)} \in H^2(S,\BC), \quad i=1,\ldots, 2r \]
such that
\begin{itemize}[itemsep=0pt]
\item each $\alpha_{\ell}^{(i)}$ is orthogonal to $\beta$ and all $\gamma_i$ that lie in $H^2(S)$,
\item $\alpha_1^{(i)} \cdot \alpha_1^{(i)} = \alpha_2^{(i)} \cdot \alpha_2^{(i)} = 0$ and
$\alpha_1^{(i)} \cdot \alpha_2^{(i)} = 1$
\item $\alpha_{\ell}^{(i)} \cdot \alpha_m^{(j)} = 0$ for all $\ell,m$ and $i,j$.
\end{itemize}
Then the recursion described in Section~\ref{sec:descendents of 1}
determines the primitive descendent invariant
$\langle \tau_{k_1}(\gamma_1) \cdots \tau_{k_n}(\gamma_n) \rangle^S_{g,\beta}$
in terms of the stationary invariants.
\end{thm}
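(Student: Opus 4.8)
The plan is to realize the recursion of Section~\ref{sec:descendents of 1} as a terminating reduction whose single engine is the holomorphic anomaly equation recalled in Section~\ref{sec:Background}, and to prove termination by exhibiting a strictly decreasing complexity. The starting point is a \emph{single-step reduction}: given a distinguished insertion $\tau_k(1)$ with $k\geq 2$ (the cases $k\in\{0,1\}$ being the string and dilaton equations, together with the convention for $\tau_{<0}$ from the Remark after Theorem~\ref{thm:Example 2}), I would express
\[
\blangle \tau_k(1) \prod_i \tau_{k_i}(\gamma_i) \brangle^S_\beta
\]
as a sum of four kinds of terms, exactly as in Theorems~\ref{thm:Example 1} and~\ref{thm:Example 2}: a term in which $1$ is replaced by $\pt$ and $k$ drops by $3$; interaction terms with each of the other insertions, obtained from the topological recursion relations, in which $1$ merges into $\gamma_i$ and the descendent orders shift; a divisor term carrying a class $F$ with $\beta\cdot F=1$ and $F^2=0$ and a dilaton-type coefficient linear in the descendent data, coming from the divisor equation; and finally an \emph{anomaly term}. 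The last term is the crucial one, and the only place the \emph{reduced} nature of the theory enters: the holomorphic anomaly equation controls the $G_2$-dependence of the quasi-modular generating series, and converts the non-modular behaviour of the unit descendent into a self-gluing insertion of the $H^2$-part of the diagonal, which I would realize concretely as the insertion of a hyperbolic pair $(\alpha_1,\alpha_2)$ of divisor classes with $\alpha_i^2=0$ and $\alpha_1\cdot\alpha_2=1$. Choosing this pair orthogonal to $\beta$ and to every $\gamma_i$ lying in $H^2$ is precisely what forces the cross terms of the gluing to vanish so that only the single pair survives; this is the origin of the orthogonality hypotheses in the statement.

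Next I would set up the induction on a lexicographic complexity recording the multiset of descendent orders of the unit insertions $\tau_{k_j}(1)$. In the single-step reduction the distinguished $\tau_k(1)$ is replaced throughout by insertions of strictly positive degree --- $\pt$, the divisor $F$, the hyperbolic pair $(\alpha_1,\alpha_2)$, or an absorption into some $\gamma_i$ --- while any unit descendent that the diagonal might reintroduce has strictly smaller order $<k$. Hence the complexity strictly decreases and the recursion terminates. Its leaves carry no unit descendents of order $\geq 2$, and a final application of the string and dilaton equations turns them into stationary invariants, all of whose insertions have positive degree. This identifies the output of the recursion with an explicit combination of stationary invariants, as claimed.

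It then remains to account for the auxiliary classes. Each time the anomaly term fires it consumes one hyperbolic pair, and in order to iterate I must ensure that every newly introduced pair is orthogonal not only to $\beta$ and to the original $H^2$-insertions, but also to all pairs introduced at earlier stages, so that no spurious pairings appear in later gluings. Tracking the reduction shows that resolving the $r$ divisor insertions $\gamma_i\in H^2$ (and the point classes that the self-gluing opens up along the way) requires at most $2r$ mutually orthogonal hyperbolic pairs; the hypothesis that such $\alpha_1^{(i)},\alpha_2^{(i)}$ with $i=1,\dots,2r$ exist therefore supplies exactly the combinatorial room needed to run the recursion to the end. Over $\BC$ these constraints are purely linear-algebraic and are satisfiable as soon as the rank of $H^2(S)$ is large enough, so the hypothesis is mild; in the low-complexity situations of Theorems~\ref{thm:Example 1} and~\ref{thm:Example 2} the relevant pairs can moreover be written down by hand.

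Finally, I expect the main obstacle to be the rigorous derivation and bookkeeping of the single-step reduction itself. The delicate points are: disentangling the descendent shifts produced by the topological recursion relations from those produced by the divisor equation in the interaction terms; pinning down the exact dilaton-type coefficient multiplying the $F$-term; and, above all, verifying that the anomaly term can always be presented through an \emph{honest} hyperbolic pair of cohomology classes subject to all the accumulated orthogonality constraints. Controlling this last point uniformly along the entire recursion --- rather than only in the one- and two-insertion cases where everything can be made explicit --- is what the hypothesis on the $\alpha_\ell^{(i)}$ is designed to guarantee, and is where most of the work of Section~\ref{sec:descendents of 1} will go.
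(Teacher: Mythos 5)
Your proposal correctly identifies the holomorphic anomaly equation and the mutually orthogonal hyperbolic pairs as the essential inputs, but it is missing the actual mechanism by which a $\tau_k(1)$ gets removed, and the devices you substitute for it would not work. The anomaly equation (Theorem~\ref{thm:HAE}) computes $\frac{d}{dG_2}$ of a generating series; it contains no term that deletes a unit descendent --- on the contrary, its only interaction with unit descendents is to \emph{create} one, via $\tau_{k_i+1}(\pi^{\ast}\pi_{\ast}\gamma_i)$ when $\gamma_i$ pairs nontrivially with $F$ (the $\tau_0(1)$ in its first term being disposed of by the string equation). The paper exploits exactly this: one forms the auxiliary invariant $I_W$ in which one unit insertion is traded for a $W$-insertion, and computes $\frac{d}{dG_2}I_W$ in two ways. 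Applied directly, the anomaly equation yields $-2I$ plus terms with fewer unit descendents (Lemma~\ref{lemma:path 1}). Applied after first invoking deformation invariance (Theorem~\ref{thm:dependence}, in the form of Proposition~\ref{prop:Removing W}) to rewrite $W\mapsto dF+\alpha_1$ and $F\mapsto F+\alpha_2$ --- which converts the $d$-dependence into the operator $D_q$ --- one uses the commutation relation \eqref{commutation relation}, $[\frac{d}{dG_2},D_q]=-2\wt$, and the anomaly equation again; no new unit descendents appear because no $W$-insertion remains. Equating the two computations and solving for $I$ is the recursion step (Proposition~\ref{prop:Removing 1}). Your proposal contains no analogue of this two-sided computation, hence no equation in which the target invariant can be isolated.

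Two of your specific claims are also off. The ``interaction terms obtained from the topological recursion relations'' are not available here: the paper stresses that no Virasoro/TRR-type constraints are known for reduced K3 invariants, and the interaction terms of Theorems~\ref{thm:Example 1}, \ref{thm:Example 2} and \ref{thm:more general example of recursion} come from the $\sigma$-term of the anomaly equation together with the string equation (Lemma~\ref{lemma:divisor string dilaton}). Likewise, the hyperbolic pairs $(\alpha_1^{(i)},\alpha_2^{(i)})$ do not arise as a ``self-gluing insertion of the $H^2$-part of the diagonal'' in the anomaly term; they enter through the replacement $W\mapsto dF+\alpha_1$, $F\mapsto F+\alpha_2$ of Proposition~\ref{prop:Removing W}, and the orthogonality hypotheses are there so that Theorem~\ref{thm:dependence} applies and so that pairs introduced in different rounds do not produce cross terms in later applications of the $\sigma$-term. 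Finally, the termination argument is simpler than your lexicographic one: once the $W$'s are removed, each round strictly decreases the number of $\tau_k(1)$-insertions, so induction on that count suffices.
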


For example, if $\gamma_i \in \{ 1, \beta, \pt \}$, then the assumption of Theorem~\ref{thm:recursion} is satisfied for $r \leq 5$.

Together with Conjecture~\ref{conj:GW stationary} this allows to (conjecturally) compute a large class of primitive invariants.
An implementation of this algorithm has been made by the author
and can be found on his webpage.
To give a concrete example, a short computer computation and assuming Conjecture~\ref{conj:GW stationary} gives the genus $29$ invariant
\[
\langle \tau_8(1) \tau_{5}(1) \tau_{10}(1) \tau_4(\pt) \tau_3(\pt) \rangle^S_{g=29, \beta^2/2=3}
=
- \frac{13094491}{333598540006510406597452234752000000}.
\]

\subsection{Polynomial behaviour}
There is a  striking polynomial behaviour
that can be numerically observed in the descendent invariants of the K3 surface.
Let $\beta \in H_2(S,\BZ)$ be a primitive effective curve class with $\beta^2 \neq 0$,
let $\delta_1, \ldots, \delta_t \in H^2(S)$ with $\delta_i \cdot \beta = 0$,
and consider the descendent invariant:
\begin{equation} \label{GWBracket2} \left\langle 
\prod_{i=1}^{r} \tau_{k_i}(1) 
\prod_{i=1}^{s} \tau_{\ell_i}(\beta)
\prod_{i=1}^{t} \tau_{m_i}(\delta_i)
\prod_{i=1}^{u} \tau_{n_i}(\pt)
\right\rangle^S_{\beta}.
\end{equation}

Assuming $k_i, m_i \geq 1$, we can normalize the invariant by a certain combinatorial factor, defining:
\begin{multline*}
\left\llangle 
\prod_{i=1}^{r} \tau_{k_i}(1) 
\prod_{i=1}^{s} \tau_{\ell_i}(\beta)
\prod_{i=1}^{t} \tau_{m_i}(\delta_i)
\prod_{i=1}^{u} \tau_{n_i}(\pt)
\right\rrangle^S_{\beta} \\
:=
\prod_{i=1}^r (-4)^{k_i-1} (2k_i-1)!!
\prod_{i=1}^{s} (-4)^{\ell_i} (2 \ell_i + 1)!!
\prod_{i=1}^{t} (-4)^{m_i-1} (2m_i-1)!!
\prod_{i=1}^{t} (-4)^{n_i} (2n_i+1)!! \\
\cdot
\left\langle 
\prod_{i=1}^{r} \tau_{k_i}(1) 
\prod_{i=1}^{s} \tau_{\ell_i}(\beta)
\prod_{i=1}^{t} \tau_{m_i}(\delta_i)
\prod_{i=1}^{u} \tau_{n_i}(\pt)
\right\rangle^S_{\beta}.
\end{multline*}

We then make the following conjecture:

\begin{conj} \label{conj:polynomiality}
There exists a polynomial $p(x_1, \ldots, x_{r+s+t+u})$ of degree
$\beta^2+2-2u - t + r$ 
such that for all
\begin{equation} k_i \geq \beta^2/2 + 3 - (u+\frac{1}{2} t), \quad \quad \ell_i, m_i, n_i \geq \beta^2/2 +1 - (u+t/2), 
 \label{polynomial range} \end{equation}
we have
\begin{multline*}
\left\llangle 
\prod_{i=1}^{r} \tau_{k_i}(1) 
\prod_{i=1}^{s} \tau_{\ell_i}(\beta)
\prod_{i=1}^{t} \tau_{m_i}(\delta_i)
\prod_{i=1}^{u} \tau_{n_i}(\pt)
\right\rrangle^S_{\beta}
=
p(k_1, \ldots, k_r, \ell_1, \ldots, \ell_s, m_1, \ldots, m_t, n_1, \ldots, n_u).
\end{multline*}
\end{conj}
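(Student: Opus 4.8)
\emph{Plan.} Granting Conjecture~\ref{conj:GW stationary}, I would deduce the polynomiality in three stages. First, use the recursion of Theorem~\ref{thm:recursion} to strip off the $r$ descendents of $1$, rewriting the bracket \eqref{GWBracket2} as a $\BZ$-linear combination, with coefficients that are polynomials in the $k_i$, of purely stationary brackets in the classes $\beta,\delta_i,\pt$ together with the auxiliary divisors $F$ (with $F\cdot\beta=1$, $F^2=0$) and $\alpha_\ell^{(i)}$ introduced by the recursion. Second, apply Conjecture~\ref{conj:GW stationary} to express each such stationary bracket as $\mathrm{Coeff}_{q^{\beta^2/2}}$ of a finite product of the series $A_\bullet, B_\bullet, C_{\bullet\bullet}$ divided by $\Delta$. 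Third, show that after the combinatorial normalization the relevant $q$-coefficients of these series are polynomial in their descendent indices, so that coefficient extraction yields a polynomial whose degree can be tracked.

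\emph{Polynomiality of the one-variable series.} The heart of the matter is the index dependence of $A_k$ and $B_k$. I would introduce the local coordinate $w=(\wp-4G_2)^{-1/2}=z+2G_2z^3+\cdots$, an odd power series, and invert it. By the residue change-of-variables (Lagrange inversion),
\[
\Res_{z=0}\big[(\wp-4G_2)^{m+\frac12}\big]=\big[w^{2m}\big]\tfrac{dz}{dw},
\]
and a direct computation using $(\wp')^2=4\wp^3-20G_4\wp-\tfrac73 G_6$ gives $\tfrac{dz}{dw}=P(w^2)^{-1/2}$, where $P$ is a power series in $w^2$ with quasimodular coefficients and constant ($q^0$) term $P\big|_{q=0}=(1-\tfrac14 w^2)^2$. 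Thus $\tfrac{dz}{dw}\big|_{q=0}=(1-\tfrac14 w^2)^{-1}$, so the coefficient $[w^{2k}][q^j]\tfrac{dz}{dw}$ equals $4^{-k}$ times a polynomial in $k$, coming from the expansion of $(1-\tfrac14 w^2)^{-(2j+1)}$. Writing $\widetilde A_k=(-4)^k(2k+1)!!\,A_k=4^k[w^{2k}]\tfrac{dz}{dw}$ and similarly $\widetilde B_k=\tfrac{4^k}{2k+3}\Res_{z=0}[(\wp-4G_2)^{k+\frac32}(\wp+2G_2)]$, the factor $4^k$ cancels the geometric decay, and one finds that $[q^m]\widetilde A_k$ is a polynomial in $k$ of degree $2m$, while $[q^m]\widetilde B_k$ is a polynomial of degree $2m-2$ with vanishing $q^0$-coefficient (indeed $[q^1]\widetilde B_k\equiv 1$). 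These two facts produce respectively the base degree $\beta^2+2$ and the $-2u$ correction.

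\emph{Degree bookkeeping and the two-variable series.} With $1/\Delta=q^{-1}(1+24q+\cdots)$, extracting $\mathrm{Coeff}_{q^{\beta^2/2}}$ from a product such as $\prod_i\widetilde B_{n_i}/\Delta$ distributes a total $q$-degree $\beta^2/2+1$ among the factors; since each point factor is forced to start at $q^1$ and contributes degree $2m_i-2$, the total degree is maximised at $\beta^2+2-2u$, matching the conjecture for $r=s=t=0$. A $\beta$- or $F$-insertion contributes $\widetilde A$ (degree $2m$, nonvanishing at $q^0$) and gives no correction, which is why $s$ is absent from the degree. The recursion turns each $\tau_k(1)$ into a polynomial of degree one in $k$ times shifted series: after normalization Theorem~\ref{thm:Example 1} reads $16(2k-1)(2k-3)\widetilde B_{k-3}+2(k-2)\widetilde A_{k-1}$, raising the degree by $1$ and explaining the $+r$. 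The divisor insertions $\delta_i$ (and the recursion's $\alpha_\ell^{(i)}$) are paired through $C_{k\ell}$; I would treat these by the same substitution in both $z_1,z_2$ and expand the propagator $\wp(z_1-z_2)+2G_2$, showing $[q^m]\widetilde C_{k\ell}$ is a polynomial of the appropriate bidegree and accounting for the $-t$.

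\emph{Main obstacle.} The clean one-variable picture rests on the identity $P\big|_{q=0}=(1-\tfrac14 w^2)^2$ together with uniform control of the onset of polynomiality. The genuinely harder step is the two-variable series $C_{k\ell}$: the propagator $\wp(z_1-z_2)$ is singular on the diagonal and couples the two iterated residues, so proving polynomiality of $[q^m]\widetilde C_{k\ell}$ and pinning down its exact bidegree is delicate. Equally, I expect the precise range \eqref{polynomial range} — in particular the $-(u+\tfrac12 t)$ shift of the threshold — to require a uniform estimate of how large each index must be before the Lagrange-inversion coefficients become genuinely polynomial at a fixed $q$-order, since the $q$-budget argument shows that point and divisor insertions consume $q$-degree and thereby lower the threshold. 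Establishing this onset uniformly, rather than merely the existence of the limiting polynomials, is the crux of the argument.
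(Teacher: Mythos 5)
The statement you are trying to prove is a \emph{conjecture} in this paper: the author gives no proof of Conjecture~\ref{conj:polynomiality}, only numerical evidence, the $\beta^2=-2$ example, and a one-line remark after Proposition~\ref{prop:Char C} that the characterizations of $A_k,B_k,C_{k\ell}$ show that Conjecture~\ref{conj:GW stationary} implies Conjecture~\ref{conj:polynomiality} \emph{in the stationary case only} (i.e.\ $r=0$). Your proposal is therefore not a proof and cannot be compared against one; it is a conditional reduction, resting on two unproven inputs (Conjecture~\ref{conj:GW stationary}, and the applicability of the recursion), plus several steps you yourself flag as open ($C_{k\ell}$, the onset thresholds in \eqref{polynomial range}, the degree bookkeeping through the recursion). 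Beyond that, there is a structural gap in stage one: Theorem~\ref{thm:recursion} requires $2r$ mutually orthogonal hyperbolic planes in $H^2(S,\BC)$ orthogonal to $\beta$ and to the $\delta_i$, which is impossible once $4r$ (plus the span of $\beta$ and the $\delta_i$) exceeds $22$ --- the paper notes the recursion already fails for $r\geq 6$ with $\gamma_i\in\{1,\beta,\pt\}$. Since Conjecture~\ref{conj:polynomiality} is asserted for arbitrary $r$ and $t$, your strategy cannot cover its full scope even granting both conjectural inputs.

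That said, your Lagrange-inversion computation is correct and is genuinely different from what the paper does: setting $w=(\wp-4G_2)^{-1/2}$ gives $\Res_{z=0}(\wp-4G_2)^{k+\frac12}=[w^{2k}]\tfrac{dz}{dw}$ with $(\tfrac{dw}{dz})^2$ a polynomial in $w^2$ whose $q^0$-part is $(1-\tfrac14w^2)^2$ (one checks $w|_{q=0}=2\tanh(z/2)$), so $[q^j]\tfrac{dz}{dw}$ is $(1-\tfrac14w^2)^{-(2j+1)}$ times a polynomial in $w^2$ and the normalized coefficient $4^k[w^{2k}][q^j]\tfrac{dz}{dw}$ is polynomial of degree $2j$ in $k$ for $k$ large. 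This recovers property (b) of Theorem~\ref{thm:Characterization2} more cleanly than the paper's contour-integral argument, which compares $[\,\cdot\,]_{p^0}$ with $\Res_{z=0}$ via a branch-cut analysis and an $O(q^{k+1})$ estimate. If you want a result you can actually establish, I would recommend writing up that computation as an alternative proof of Theorem~\ref{thm:Characterization2}(b) (and attempting the analogues for $B_k$ and $C_{k\ell}$), rather than presenting the above as a proof of Conjecture~\ref{conj:polynomiality}.
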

\vspace{8pt}

In the above conjecture we define the degree of a multivariable polynomial $p(x_1, \ldots, x_n) = 
\sum_{i_1,\ldots,i_n} a_{i_1,\ldots,i_n} x_1^{i_1} \cdots x_n^{i_n}$ to be the maximum of $i_1 + \ldots + i_n$ for which $a_{i_1,\ldots,i_n} \neq 0$.

The bound $\beta^2/2 +1 - (u+t/2)$ is precisely the dimension of the locus of curves in the linear system $|\CO(\beta)|$ incident to $u$ points as well as to generic smooth cycles representing the classes $\delta_i$. This points to a geometric reason for the polynomiality.

\begin{example}
Let $\beta^2 = -2$, so that $\beta$ is the class of a smooth rational curve.
Then all reduced invariants have been computed by Maulik \cite{Maulik}.
The bracket \eqref{GWBracket2} vanishes if $t>0$ or $u>0$ (since the rational curve cannot move).
For the remaining terms the formula is:
\[
\left\llangle 
\prod_{i=1}^{r} \tau_{k_i}(1) 
\prod_{i=1}^{s} \tau_{\ell_i}(\beta)
\right\rrangle^S_{\beta} 
=
(\beta \cdot \beta)^{s} \prod_{p=1}^{r} (2g+p+s-3),
\]
where $k_i \geq 2$ and $\ell_i \geq 0$.
Since $g=\sum_i k_i + \sum_{i} \ell_i - r$,
we find that this matches Conjecture~\ref{conj:polynomiality}.
\end{example}

\begin{rmk}
If some of the $k_i, \ell_i, m_i, n_i$ lie outside the polynomial range (i.e. do not satisfy \eqref{polynomial range}), then we still expect the invariant to be polynomial in those parameters which lie in the polynomial range.
A precise conjecture is given in Section~\ref{subsec:upgraded polynomiality}.
For example, for $k, \ell \geq 1$ we expect in the polynomial range the following:
\[
\llangle \tau_{k}(\pt) \tau_{\ell}(\pt) \rrangle^S_{\beta^2/2=2} = 
8  k^{2} + 8  l^{2} - 12  k - 12  l + 20.
\]
Let $P(k,\ell)$ denote the polynomial on the right. Then for $\ell=0$ (which is outside the polynomial range) one has:
\[
\llangle \tau_{k}(\pt) \tau_{0}(\pt) \rrangle^S_{\beta^2/2=2} = 
8 \, k^{2} - 12 \, k + 28 \ \neq \  P(k,0).
\]
\end{rmk}

The polynomial behaviour of the descendent invariants is a strong constraint on them.
In basic cases it can be used to determines the Gromov--Witten invariants invariants.
We explain this in the case of point insertions:

\begin{thm} \label{thm:Point insertions}
The following statements are equivalent:
\begin{enumerate}[itemsep=0pt]
\item[(i)] For every primitive $\beta$, the series $\langle \tau_k(\pt) \rangle^S_{\beta}$
equals a polynomial of degree $\beta^2$ for $k \geq \beta^2/2$.
\item[(ii)] For every primitive $\beta$, we have the generalized Bryan-Leung formula:
\[
\left\langle \prod_{i=1}^{n} \tau_{k_i}(\pt) \right\rangle^S_{\beta}
=
\mathrm{Coeff}_{q^{\beta^2/2}} \left[ \frac{B_{k_1}(q) \cdots B_{k_n}(q)}{\Delta(q)} \right].
\]
\end{enumerate}
\end{thm}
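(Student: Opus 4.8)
The plan is to pass to the $\Delta$-normalized generating series
\[
g_{k_1,\ldots,k_n}(q) := \Delta(q)\sum_{\beta}\Big\langle \prod_{i=1}^n\tau_{k_i}(\pt)\Big\rangle^S_{\beta}\,q^{\beta^2/2},
\]
the sum running over primitive classes and the summand depending only on $\beta^2$ by deformation invariance; write $g_k$ for the one-insertion series. By \cite{MPT} each $g_{k_1,\ldots,k_n}$ is quasi-modular, of weight $2\sum_i k_i + 4n$. In this language (ii) is the conjunction of the evaluation $g_k = B_k$ for every $k$ with the multiplicativity $g_{k_1,\ldots,k_n} = \prod_i g_{k_i}$, while (i) asserts that for each $m$ the quantity $\mathrm{Coeff}_{q^m}[g_k/\Delta]$ is polynomial of degree $2m$ in $k$ for $k\ge m$. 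I would organize the proof around the single identity $g_k = B_k$, establishing (ii)$\Rightarrow$(i)$\Rightarrow(g_k=B_k)\Rightarrow$(ii).

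For (ii)$\Rightarrow$(i) one specializes to $n=1$, so that $\langle\tau_k(\pt)\rangle^S_\beta = \mathrm{Coeff}_{q^{m}}[B_k/\Delta]$ with $m = \beta^2/2$, and is reduced to the purely analytic claim that this is polynomial of degree $2m$ in $k$ for $k\ge m$. Since $1/\Delta = q^{-1} + O(1)$, the coefficient $\mathrm{Coeff}_{q^m}[B_k/\Delta]$ is a finite combination of the $q^a$-coefficients of $B_k$ for $0\le a\le m+1$; moreover the $q^0$-coefficient of $B_k$ vanishes (equivalently $\langle\tau_k(\pt)\rangle^S_\beta=0$ when $\beta^2=-2$, the rational-curve case of Maulik \cite{Maulik}). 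The substance is to show each such $q^a$-coefficient is eventually polynomial in $k$ and to read off the total degree. I would prove this from the residue presentation \eqref{B}: substituting $t = (\wp-4G_2)^{-1/2}$ and inverting (Lagrange inversion) rewrites $B_k$ as a coefficient-extraction from a fixed $z$-series, exhibiting the $k$-dependence of each $q$-coefficient and making eventual polynomiality visible. The factorial normalization $(2k+3)!!$ is the only real nuisance, as it prevents summing the $k$-family as a naive geometric series.

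For (i)$\Rightarrow$(ii) the first and crucial step is to deduce $g_k = B_k$. Both sides are quasi-modular of weight $2k+4$, and by \cite{HAE} the Gromov--Witten family $g_k$ satisfies a holomorphic anomaly equation expressing $\partial g_k/\partial G_2$ through strictly lower data; one checks that the explicit family $B_k$ satisfies the very same equation (a computation with $\wp$, using that $\partial/\partial G_2$ of the residues in \eqref{B} is again of the prescribed shape). The equation then determines each family from its modular (i.e. $G_2$-independent) part, a form in the finite-dimensional space $M_{2k+4}$, and the polynomiality (i)—which, by the direction just proven, also holds for $B_k$—supplies exactly the constraints pinning down this ambiguity, forcing $g_k = B_k$. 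The second step is the multiplicativity $g_{k_1,\ldots,k_n} = \prod_i g_{k_i}$. The natural route is a degeneration/localization argument in the spirit of Bryan--Leung \cite{BL}: on an elliptically fibered K3 the point insertions localize to distinct fibers and their descendent contributions factor, exactly as in the $k_i=0$ case recorded in the Bryan--Leung theorem; alternatively one propagates multiplicativity inductively in $n$ from the splitting terms of the holomorphic anomaly equation. Combined with $g_k=B_k$ this yields (ii).

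The main obstacle is the identification $g_k = B_k$ from (i). The difficulty is that for a single fixed $k$ the polynomiality says nothing—it is a constraint on the whole $k$-family—so one cannot conclude by a dimension count in a single weight; the argument must use the joint dependence on $k$ and $q$, and I expect the cleanest implementation to encode the family $\{g_k\}_k$ in a two-variable generating function and match it with the closed form coming from \eqref{B} through the anomaly equation. The requisite multiplicativity is the secondary obstacle: although the Bryan--Leung fiberwise picture makes it geometrically compelling, verifying that the $\psi$-class contributions genuinely factor over fibers—or, in the inductive alternative, controlling the modular integration constants produced when integrating the holomorphic anomaly equation—will require care.
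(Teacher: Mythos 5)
Your proposal is correct and takes essentially the same route as the paper: the factorization over point insertions is exactly Lemma~\ref{lemma:splitting formula} (the standard degeneration argument), and your mechanism for pinning down $g_k=B_k$ --- weight, the holomorphic anomaly equation, eventual polynomiality of the $q$-coefficients in $k$, and the normalization --- is precisely the characterization packaged as Proposition~\ref{prop:Char B}, whose uniqueness is established by the interpolation-plus-modularity induction you gesture at. The only divergence is in how one verifies the polynomiality property for the explicit series $B_k$ (you suggest Lagrange inversion on the residue formula; the paper compares the formal residue with the $p^0$-Fourier coefficient via a contour and branch-cut analysis), but that is a sub-lemma the theorem's proof cites rather than part of the proof itself.
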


The proof relies on a characterization of the function $B_k(q)$ in terms of its (partial) polynomial behaviour. Similar characterizations exists for $A_k$ and $C_{k, \ell}$, see Section~\ref{sec:polynomial behaviour}. The partial polynomiality explains the shape of the formulas for $A,B,C$.

The characterization for the function $A_k$ is the simplest. It simply reads:
\begin{thm} \label{prop:Characterization}
The series of functions 
$A_k(q)$, $k \geq 0$ (defined in \eqref{A})
is the unique series of power series satisfying the following conditions:
\begin{enumerate}[itemsep=0pt]
\item[(a)] $A_k(q)$ is a quasi-modular form of weight $2k$ satisfying $\frac{d}{dG_2} A_k = 2 A_{k-1}$ (with $A_{-1}=0$),
\item[(b)] for every $n \geq 0$ there exists a polynomial $p_n(k)$ of degree $2n$ such that
for every $k \geq n$ we have
\[ \Big[ A_k(q) \Big]_{q^n} = 
\frac{1}{(-4)^k (2k+1)!!}
p_n(k), \]
\item[(c)] $A_0 = 1 + O(q)$.
\end{enumerate}
\end{thm}

\subsection{Plan of the paper}
In Section~\ref{sec:Background} we review what is known about the Gromov--Witten theory of K3 surface. In Section~\ref{sec:descendents of 1}
we give the recursion removing descendents of $1$.
In Section~\ref{sec:polynomial behaviour}
we give and prove the characterization of the functions $A_k$, $B_k$, $C_{k \ell}$.
The appendices give further examples of the polynomiality,
and a conjectural Virasoro-type constraint in a special case.

\subsection{Acknowledgements}
I would like to thank Jan-Willem van Ittersum, Aaron Pixton, and Maximilian Schimpf for useful discussions. I also thank the referee for useful suggestions which improved the paper.
The author was supported by the starting grant 'Correspondences in enumerative geometry: Hilbert schemes, K3 surfaces and modular forms', No 101041491
 of the European Research Council.

\section{Background} \label{sec:Background}
We state some background formulae on the Gromov--Witten invariants of the K3 surface.

\subsection{Quasi-modular forms} \label{subsec:quasi modular forms}
The algebra of quasi-modular forms is the free polynomial algebra
\[ \QMod = \BC[G_2, G_4, G_6]. \]
We have $G_k \in \QMod$ for all $k$, and $\QMod$ is graded by weight of the generators.
The differential operators
$D_q := q \frac{d}{dq}$ and $\frac{d}{dG_2}$ act on $\QMod$
and satisfy
\begin{equation} \left[ \frac{d}{dG_2}, D_q \right] = -2 \wt \label{commutation relation} \end{equation}
where the weight operator $\wt \in \End( \QMod)$ acts on the space $\QMod_k$ of quasi-modular forms of weight $k$ by multiplication by $k$.

\subsection{Gromov--Witten theory of K3 surfaces} \label{subsec:GW theory of K3 definitions}
Let $S$ be an algebraic  K3 surface. 
Let $\beta \in H_2(S,\BZ)$ be an effective curve class, i.e. there exists a non-empty algebraic curve $C \subset S$ with $[C] = \beta$.
Let $\Mbar_{g,n}^{\circ}(S,\beta)$ be the moduli space of $n$-marked genus $g$ degree $\beta$ stable maps $(f:C \to S, p_1, \ldots, p_n)$ where the domain $C$ is allowed to be disconnected but with the following assumption: For every connected component $C' \subset C$ we have that (1) the restriction $f|_{C'}$ is non-constant, or (2) the component $C'$ together with the markings incident to $C'$ is stable.

The usual virtual fundamental class of the moduli space $\Mbar_{g,n}^{\circ}(S,\beta)$ vanishes
because of the existence of a holomorphic $2$-form on $S$.
Instead, Gromov--Witten theory is defined by a
reduced virtual fundamental class \cite{KT}
\[
[ \Mbar_{g,n}^{\circ}(S,\beta) ]^{\red}  \in \mathrm{CH}_{g+n}(\Mbar_{g,n}^{\circ}(S,\beta)),
\]
where $\mathrm{CH}_k$ denotes the Chow groups.
The descendent invariants are defined by
\begin{equation} \label{GWinvariant_repeated}
\left\langle \tau_{k_1}(\gamma_1) \cdots \tau_{k_n}(\gamma_n) \right\rangle^{S}_{g,\beta}
=
\int_{[ \Mbar_{g,n}^{\circ}(S,\beta) ]^{\red}} \prod_i \ev_i^{\ast}(\gamma_i) \psi_i^{k_i}.
\end{equation}
where $\psi_i \in H^2(\Mbar_{g,n}^{\circ}(S,\beta))$ are the cotangent line classes,
and $\ev_i : \Mbar_{g,n}^{\circ}(S,\beta) \to S$ are the evaluation maps at the markings.

The integrals \eqref{GWinvariant_repeated} are invariant under deformations of $(S,\beta)$ which preserve the Hodge type of the class $\beta$.
This shows the following result:

\begin{thm} \label{thm:dependence}
The invariant $\left\langle \tau_{k_1}(\gamma_1) \cdots \tau_{k_n}(\gamma_n) \right\rangle^{S}_{g,\beta}$ depends upon $(S,\beta, \gamma_1, \ldots, \gamma_n)$, where $\gamma_i$ are homogeneous, only through following data:
\begin{enumerate}[itemsep=0pt]
\item[(i)] the divisibility $\beta$ and the square $\beta \cdot \beta$,
\item[(ii)] the cohomological degrees of $\gamma_i$,
\item[(iii)] for all $\gamma_i \in H^0(S)$ the degree $\int_S \pt \gamma_i$, and for all $\gamma_i \in H^4(S)$ the degree $\int_S \gamma_i$,
\item[(iv)] for all $i,j$ with $\deg_{\BC}(\gamma_i)=1$ the pairings
$\beta \cdot \gamma_i$ and $\gamma_i \cdot \gamma_j$.
\end{enumerate}
In other words,
if 
$(S',\beta',\gamma_1', \ldots, \gamma_n')$ has the same data as $(S,\beta, \gamma_1, \ldots, \gamma_n)$, then
\[
\left\langle \tau_{k_1}(\gamma_1) \cdots \tau_{k_n}(\gamma_n) \right\rangle^{S}_{g,\beta}
=\left\langle \tau_{k_1}(\gamma_1') \cdots \tau_{k_n}(\gamma_n') \right\rangle^{S'}_{g,\beta'}.
\]
\end{thm}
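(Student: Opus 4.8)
\emph{Proof proposal.} The plan is to deduce the theorem from the deformation invariance of the reduced virtual class \cite{KT}, combined with the surjectivity of the period map, the known largeness of the monodromy of K3 surfaces, and the first fundamental theorem of invariant theory for the orthogonal group. Since the bracket \eqref{GWinvariant_repeated} is multilinear in the insertions, I may assume each $\gamma_i$ is homogeneous. For $\gamma_i \in H^0(S)$ one has $\gamma_i = (\int_S \pt\, \gamma_i)\cdot 1$, and for $\gamma_i \in H^4(S)$ one has $\gamma_i = (\int_S \gamma_i)\cdot \pt$; since $H^1(S)=H^3(S)=0$ on a K3 surface, the only remaining insertions lie in $H^2(S)$. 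As monodromy acts trivially on $H^0\oplus H^4$, these insertions contribute only through the scalars recorded in (iii). It therefore suffices, after fixing the number and the $\psi$-powers of all insertions, to regard the bracket as a multilinear functional $\Phi_{S,\beta}\colon H^2(S,\BC)^{\otimes m}\to \BC$ in the $H^2$-insertions, and to show both that $\Phi_{S,\beta}$ depends on its arguments only through the pairings $\gamma_i\cdot\beta$ and $\gamma_i\cdot\gamma_j$, and that its coefficients depend on $(S,\beta)$ only through the divisibility of $\beta$ and $\beta\cdot\beta$.

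For the first point I would consider a smooth family $\pi\colon \CS\to B$ of K3 surfaces over a connected base carrying a flat (Gauss--Manin) section $\tilde\beta$ of $R^2\pi_\ast\BZ$ that restricts to a $(1,1)$-class on every fibre, so that the reduced class is defined fibrewise. Taking the insertions to be flat sections of $R^2\pi_\ast\BC$, deformation invariance of the reduced class shows that $\Phi_{S_b,\tilde\beta_b}$ is independent of $b$. In particular $\Phi_{S,\beta}$ is invariant under the image $\Gamma_\beta$ of the monodromy representation in $O(H^2(S,\BZ))$, for the family of K3 surfaces carrying a fixed $(1,1)$-class in the orbit of $\beta$. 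Here it is essential that parallel transport is topological, not holomorphic: the Hodge type of a flat section varies along $B$, so $\Gamma_\beta$ acts through the full integral orthogonal action and not merely through Hodge isometries.

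The heart of the argument, and the step I expect to be the main obstacle, is then purely group-theoretic. Assume first $\beta\cdot\beta\neq 0$ and split $H^2(S,\BC)=\BC\beta\oplus\beta^\perp$. By the known structure of the monodromy of K3 surfaces, $\Gamma_\beta$ is a finite-index, hence (by Borel density) Zariski-dense, subgroup of the stabiliser of $\beta$ in the real orthogonal group, and thus is Zariski-dense in $O(\beta^\perp\otimes\BC)$; the Picard--Lefschetz reflections lie in the monodromy, so one obtains the full orthogonal and not merely the special orthogonal group, ruling out any Pfaffian-type invariant. As $\Phi_{S,\beta}$ is algebraic and invariant under a Zariski-dense subgroup, it is invariant under all of $O(\beta^\perp\otimes\BC)$. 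By the first fundamental theorem of invariant theory, an invariant multilinear functional of the orthogonal projections $\gamma_i^\perp$ is a linear combination of products of pairings $\gamma_i^\perp\cdot\gamma_j^\perp$; rewriting $\gamma_i^\perp\cdot\gamma_j^\perp=\gamma_i\cdot\gamma_j-(\gamma_i\cdot\beta)(\gamma_j\cdot\beta)/(\beta\cdot\beta)$ and recording the $\BC\beta$-components through $\gamma_i\cdot\beta$ shows that $\Phi_{S,\beta}$ depends only on the pairings in (iv). The isotropic case $\beta\cdot\beta=0$ is handled by the analogous statement for the orthogonal group stabilising an isotropic line, whose invariant multilinear functionals are again generated by the pairings, so the conclusion is unchanged.

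It remains to compare different pairs $(S,\beta)$. By Eichler's criterion the $O(\Lambda)$-orbit of $\beta$ in the K3 lattice $\Lambda=H^2(S,\BZ)$ is determined by the divisibility of $\beta$ and by $\beta\cdot\beta$, so two pairs with matching data (i) become, after choosing markings $H^2(S,\BZ)\cong\Lambda$, instances of one marked lattice $(\Lambda,\beta)$. Their periods then lie in the same period domain $\{[\sigma]: \sigma\cdot\beta=0,\ \sigma\cdot\sigma=0,\ \sigma\cdot\bar\sigma>0\}$, and by surjectivity of the period map together with the global Torelli theorem any two such periods are joined by a path in a family of the above type, along which the $(1,1)$-condition on $\beta$ is preserved and the insertions are parallel transported. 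Choosing the periods generic enough and applying Weyl reflections in $(-2)$-classes (Riemann--Roch) keeps $\beta$ effective throughout, so the invariants are defined along the whole family. Deformation invariance then identifies the two brackets, and by the previous paragraphs the identification uses only the numerical data (i)--(iv), which proves the theorem.
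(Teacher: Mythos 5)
Your argument is correct and follows essentially the same route as the paper's own (sketched) proof: deformation invariance of the reduced class, Zariski density of the monodromy in the stabilizer $O(H^2(S,\BC))_\beta$, and the first fundamental theorem of invariant theory for the orthogonal group to reduce everything to the pairings in (i)--(iv). You supply more detail than the paper (Eichler's criterion, Borel density, the isotropic case), but the strategy is identical.
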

\begin{proof}
This was proven first by Buelles \cite{BuellesK3}, see also \cite[Sec.2.2]{GWNL_HK}. We sketch the argument:
By the moduli theory of K3 surfaces there exists a deformation from $(S',\beta')$ to $(S,\beta)$ that keeps $\beta'$ of Hodge type. Hence by deformation invariance, we can assume that $S'=S$ and $\beta'=\beta$.
Moreover, any isometry $m : H^{\ast}(S,\BZ) \to H^{\ast}(S,\BZ)$ which preserves $\beta$ and the K\"ahler cone, can be realized by a deformation of the K3 surface which preserves the Hodge type of $\beta$. Thus again by deformation invariance we have:
\[
\left\langle \tau_{k_1}(m(\gamma_1)) \cdots \tau_{k_n}(m(\gamma_n)) \right\rangle^{S}_{g,\beta}
=
\left\langle \tau_{k_1}(\gamma_1) \cdots \tau_{k_n}(\gamma_n) \right\rangle^{S}_{g,\beta}.
\]
The group of such isometries $m$ is Zariski dense in the group $O(H^2(S,\BC))_{\beta}$, by which we denote the stabilizer of $\beta$ in the complex orthogonal group $O(H^2(S,\BC))$. It follows:
\begin{equation} \label{Dfsdf0-9}
\ev_{\ast}\left( \prod_{i=1}^{n} \psi_i^{k_i} \cap [\Mbar^{\circ}_{g,n}(S,\beta)]^{\red} \right)
\in H^{\ast}(S^n)^{O(H^2(S,\BC))_{\beta}}.
\end{equation}
The invariants of the orthogonal group are well-understand. Concretely, one has that the ring
$H^{\ast}(S^n)^{O(H^2(S,\BC))_{\beta}}$ is generated by the pullbacks of the classes $\beta$ and $\pt$ from factors, and the big diagonals $\Delta_{ij}$.
It follows that if $(\gamma_1, \ldots, \gamma_n)$ and $(\gamma_1', \ldots, \gamma_n')$ have the same pairing data, their intersection with \eqref{Dfsdf0-9} is the same. This finishes the proof.
\end{proof}

For convenience we 
state the divisor, string and dilaton equation.

\begin{lemma} \label{lemma:divisor string dilaton}
For any effective $\beta \in H_2(S,\BZ)$ and class $D \in H^2(S)$ we have
\begin{align*}
& \left\langle \tau_{k_1}(\gamma_1) \cdots \tau_{k_n}(\gamma_n) \tau_0(D) \right\rangle^{S}_{g,\beta} \\
= & (\beta \cdot D) \left\langle \tau_{k_1}(\gamma_1) \cdots \tau_{k_n}(\gamma_n) \right\rangle^{S}_{g,\beta} \\
& + \sum_{i=1}^{n} \left\langle \tau_{k_1}(\gamma_1) \cdots \tau_{k_i-1}(\gamma_i D ) \cdots \tau_{k_n}(\gamma_n) \right\rangle^{S}_{g,\beta}  \\
& + \sum_{\substack{1 \leq i < j \leq n \\ k_i = k_j = 0}} 
\left( \int_S \gamma_i \gamma_j D \right) \left\langle \tau_{k_1}(\gamma_1) \cdots \widehat{\tau_{k_i}(\gamma_i)} \cdots \widehat{\tau_{k_j}(\gamma_j)} \cdots  \tau_{k_n}(\gamma_n)  \right\rangle^{S}_{g,\beta}
\end{align*}
and
\begin{gather*}
\left\langle \tau_{k_1}(\gamma_1) \cdots \tau_{k_n}(\gamma_n) \tau_0(1) \right\rangle^{S}_{g,\beta}
= \sum_{i=1}^{n} \left\langle \tau_{k_1}(\gamma_1) \cdots \tau_{k_i-1}(\gamma_i ) \cdots \tau_{k_n}(\gamma_n) \right\rangle^{S}_{g,\beta}  \\
+ \sum_{\substack{1 \leq i < j \leq n \\ k_i = k_j = 0}} 
\left( \int_S \gamma_i \gamma_j \right) \left\langle \tau_{k_1}(\gamma_1) \cdots \widehat{\tau_{k_i}(\gamma_i)} \cdots \widehat{\tau_{k_j}(\gamma_j)} \cdots  \tau_{k_n}(\gamma_n)  \right\rangle^{S}_{g,\beta} 
\end{gather*}
and
\[ \left\langle \tau_{k_1}(\gamma_1) \cdots \tau_{k_n}(\gamma_n) \tau_1(1) \right\rangle^{S}_{g,\beta}
=
(2g-1+n) \left\langle \tau_{k_1}(\gamma_1) \cdots \tau_{k_n}(\gamma_n) \right\rangle^{S}_{g,\beta}. \]
\end{lemma}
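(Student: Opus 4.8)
The plan is to deduce all three identities from the behaviour of the reduced theory under the forgetful morphism $\pi \colon \Mbar_{g,n+1}^{\circ}(S,\beta) \to \Mbar_{g,n}^{\circ}(S,\beta)$ that drops the last marked point (and stabilizes). Two inputs are needed. First, a compatibility of reduced classes $[\Mbar_{g,n+1}^{\circ}(S,\beta)]^{\red} = \pi^{!} [\Mbar_{g,n}^{\circ}(S,\beta)]^{\red}$: the ordinary perfect obstruction theories are compatible along $\pi$, and the reduction is carried out by the \emph{same} cosection $H^1(C, f^{\ast} T_S) \to \BC$ coming from the holomorphic symplectic form, a datum of $(C,f)$ alone and insensitive to the markings, so it pulls back along $\pi$. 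Second, the standard comparison of cotangent lines $\psi_i = \pi^{\ast}\psi_i + D_{i,n+1}$ for $i \leq n$, together with $\ev_{n+1} = \ev_i$ on the boundary section $D_{i,n+1} \cong \Mbar_{g,n}^{\circ}(S,\beta)$ and the vanishing $\psi_{n+1}\cdot D_{i,n+1} = 0$.

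For the divisor equation I insert $\ev_{n+1}^{\ast}(D)$ and push forward along $\pi$ by the projection formula. Away from the boundary the fibre of $\pi$ is the domain curve $C$ and $\pi_{\ast}\ev_{n+1}^{\ast}(D) = \int_C f^{\ast}D = \beta\cdot D$, producing the leading term $(\beta\cdot D)\langle\cdots\rangle$. The remaining contributions are supported on the sections $D_{i,n+1}$: expanding $\psi_i^{k_i} = (\pi^{\ast}\psi_i + D_{i,n+1})^{k_i}$ and restricting to $D_{i,n+1}$ turns $\psi_i^{k_i}\ev_i^{\ast}(\gamma_i)\ev_{n+1}^{\ast}(D)$ into $\psi_i^{k_i-1}\ev_i^{\ast}(\gamma_i D)$ whenever $k_i \geq 1$, giving the second sum. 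When $k_i = k_j = 0$ there is no cotangent line to absorb the incidence and the three points $i,j,n+1$ may instead sprout a common rational bubble; contracting it contributes the genus-$0$ three-point constant $\int_S \gamma_i\gamma_j D$, which is the third sum.

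The string equation is the special case $D = 1$: now $\pi_{\ast}\ev_{n+1}^{\ast}(1) = \pi_{\ast}(1) = 0$ because the fibre is positive-dimensional, so the leading term drops and only the boundary and bubble contributions remain. For the dilaton equation I insert $\psi_{n+1}$ instead. Since $\psi_{n+1}$ restricts to zero on each $D_{i,n+1}$, all the corrections coming from $\psi_i = \pi^{\ast}\psi_i + D_{i,n+1}$ disappear and the computation collapses to $\pi_{\ast}\big(\psi_{n+1}\cap [\Mbar_{g,n+1}^{\circ}(S,\beta)]^{\red}\big)$ applied to the pulled-back integrand.

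The heart of the matter — and the one place where the reduced theory genuinely departs from the classical story — is the evaluation of this last pushforward. Classically $\pi_{\ast}\psi_{n+1} = 2g-2+n$, which would give the usual dilaton factor; the assertion to be proved is instead $2g-1+n = 2g-2+(n+1)$, i.e.\ an extra unit. I expect this $+1$ to be the signature of the reduction: the reduced obstruction theory differs from the ordinary one precisely by the (Serre dual of the) trivial line $H^1(C,\omega_C)\cong \BC$, and tracing this through the relative obstruction theory of $\pi$ shifts the effective degree of $\psi_{n+1}$ by one, as if an additional marked point were present. Making this precise — either by a direct analysis of how cosection localization interacts with the Gysin pullback $\pi^{!}$, where an excess term is liable to appear, or by pinning the constant down through deformation invariance and comparison with an independently known evaluation such as the $\beta^2=-2$ case of Maulik~\cite{Maulik} — is the step I expect to be the main obstacle; the divisor and string equations, by contrast, follow from the standard forgetful-map bookkeeping above.
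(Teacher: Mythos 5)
Your forgetful-map setup is fine as far as it goes --- the compatibility of the reduced class with $\pi$ and the comparison $\psi_i = \pi^{\ast}\psi_i + D_{i,n+1}$ are exactly the ``usual arguments'' the paper invokes --- but you have misidentified the source of every non-standard term in the statement, and in the dilaton case this leaves a genuine gap. The point you are missing is that the bracket is defined on the moduli space $\Mbar^{\circ}_{g,n}(S,\beta)$ of stable maps with possibly \emph{disconnected} domain, subject to the condition that each connected component is either non-contracted or stable together with its markings. Adding the extra marking therefore opens up entirely new components of the moduli space: the new point may sit on a separate, contracted connected component of the domain which would be unstable (hence absent) after forgetting it, so the forgetful map does not even see these loci. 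They are evaluated by the degree-zero virtual class formula \eqref{vir class}, and it is these components --- not any feature of the reduction --- that produce all the correction terms.

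Concretely: the third sum in the divisor and string equations is the contribution of a disjoint contracted component $\Mbar_{0,3}(S,0)\cong \Mbar_{0,3}\times S$ carrying the markings $i$, $j$ and the new one, whence the factor $\int_S\gamma_i\gamma_j D$, the complete disappearance of $\tau_{k_i}(\gamma_i)\tau_{k_j}(\gamma_j)$ from the remaining bracket (there is no node connecting this piece to the rest), and the restriction $k_i=k_j=0$ since $\psi$-classes vanish on $\Mbar_{0,3}$. It is not a rational bubble attached to the main curve; an attached bubble is an ordinary boundary divisor already absorbed into the pushforward and contributes no separate term when $\beta\neq 0$. More importantly, the $+1$ in the dilaton factor $2g-1+n=(2g-2+n)+1$, which you flag as the main obstacle and attribute to an excess term in the cosection-localized Gysin pullback, has nothing to do with the reduced class: as you yourself observe, the reduced class pulls back under $\pi$, so the component with the marking on the main curve contributes the classical $2g-2+n$. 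The extra unit is the disjoint contracted genus-one component carrying only the dilaton marking, contributing $\int_{[\Mbar_{1,1}(S,0)]^{\vir}}\psi_1 = \int_{\Mbar_{1,1}}\psi_1\cdot\int_S c_2(S) = \tfrac{1}{24}\cdot 24 = 1$ times the original bracket (the genus bookkeeping works because the disjoint union of a genus-$g$ curve and a genus-$1$ curve has arithmetic genus $g$). Once the disconnected-domain convention is accounted for, the lemma follows from the standard equations applied componentwise, and no analysis of how the cosection interacts with $\pi^{!}$ is needed.
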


\begin{proof}
This follows by the usual arguments.
The non-standard formulation arises because we use moduli space of stable maps with disconnected domain and the marking that we consider can lie on a component which becomes unstable if we would forget the marking. This components yield the extra contributions. Then we use the evaluation of the (usual) virtual class
of the moduli space of connected stable maps $\Mbar_{g,n}(S,\beta)$ in degree zero:
\begin{equation} \label{vir class}
[ \Mbar_{g,n}(S,0) ]^{\text{vir}}
=
\begin{cases}
[\Mbar_{0,n} \times S] & \text{if } g= 0 \\
\mathrm{pr}_2^{\ast} c_2(S) \cap [\Mbar_{1,n}  \times S]
& \text{if } g= 1\\
0 & \text{if } g \geq 2
\end{cases}
\end{equation}
\end{proof}

\begin{rmk}{\em (On the relationship between connected and disconnected invariants; this remark may be skipped)}
We can also define {\em connected} Gromov--Witten invariants
\[
\left\langle \tau_{k_1}(\gamma_1) \cdots \tau_{k_n}(\gamma_n) \right\rangle^{S,\mathrm{connected}}_{g,\beta}
:=
\int_{[ \Mbar_{g,n}(S,\beta) ]^{\red}} \prod_i \ev_i^{\ast}(\gamma_i) \psi_i^{k_i},
\]
by integrating over the moduli space of stable maps $f: C \to S$
with connected domain curve $C$.
The relationship to the disconnected invariants is as follows:
\begin{lemma}
If $k_i \geq 2-\deg_{\BC}(\gamma_i)$ for all $i$ (that is, the divisor, string and dilaton equation can not be applied), then the connected and disconnected invariants coincide:
\[
\left\langle \tau_{k_1}(\gamma_1) \cdots \tau_{k_n}(\gamma_n) \right\rangle^{S}_{g,\beta}
=
\left\langle \tau_{k_1}(\gamma_1) \cdots \tau_{k_n}(\gamma_n) \right\rangle^{S,\mathrm{connected}}_{g,\beta}
\]
\end{lemma}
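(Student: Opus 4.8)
The plan is to compare the two integrals by stratifying the disconnected moduli space $\Mbar_{g,n}^{\circ}(S,\beta)$ according to the connected components of the domain curve, and to show that every stratum other than the ``fully connected'' one contributes zero. The essential structural input is the well-known shape of the reduced virtual class in a class $\beta \neq 0$: since the reduction removes a single trivial quotient of the obstruction sheaf, coming from the holomorphic symplectic form (see \cite{KT, MPT}), the reduced class is supported on the locus where exactly one connected component $C_0$ of the domain is non-contracted and carries the full class $\beta$, while every other component is contracted and, being a constant map, stable. On this locus the reduced class is the product of the reduced class of the moduli of maps from $C_0$ with the \emph{ordinary} degree-$0$ virtual classes of the contracted factors; strata with two or more non-contracted components carry a second, nowhere-vanishing cosection and hence contribute zero.

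First I would write the disconnected invariant as a sum over set-partitions of the markings $\{1,\dots,n\}$ into a distinguished block, sitting on the non-contracted component $C_0$, together with blocks $J_1, \ldots, J_s$ sitting on contracted components of genera $h_1, \ldots, h_s$. By the factorization above, each term of this sum is a product of a connected reduced invariant of $S$ in class $\beta$ with the markings of the distinguished block, times contracted factors of the form
\[
I(h,J) \;=\; \int_{[\Mbar_{h,m}(S,0)]^{\vir}} \prod_{j \in J} \ev_j^{\ast}(\gamma_j)\, \psi_j^{k_j}, \qquad m = |J|.
\]
The proof then reduces to showing that, under the hypothesis $k_i \geq 2 - \deg_{\BC}(\gamma_i)$, every contracted factor with $m \geq 1$, as well as every empty contracted component, vanishes, so that the only surviving term is the one with all $n$ markings on a single connected component carrying $\beta$; this term is by definition the connected invariant.

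The heart of the argument is a dimension count. The virtual dimension of $\Mbar_{h,m}(S,0)$ equals $h + m - 1$, so $I(h,J)$ can be nonzero only if the integrand has matching complex degree, i.e. $\sum_{j \in J}\big(k_j + \deg_{\BC}(\gamma_j)\big) = h + m - 1$. On the other hand the hypothesis gives $k_j + \deg_{\BC}(\gamma_j) \geq 2$ for each $j$, whence the left-hand side is at least $2m$; combining the two forces $m \leq h - 1$. For $h \geq 2$ the degree-$0$ virtual class vanishes by \eqref{vir class}, so $I(h,J) = 0$; this also disposes of the unmarked contracted components, which are stable only when $h \geq 2$. For $h \in \{0,1\}$ the stability of a constant-map component requires $2h - 2 + m > 0$, i.e. $m \geq 3$ when $h = 0$ and $m \geq 1$ when $h = 1$, each of which contradicts $m \leq h - 1$. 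Hence no nontrivial contracted factor survives, which completes the comparison.

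I expect the only delicate point to be the structural input invoked in the first paragraph, namely that the reduced class is genuinely supported on the single-non-contracted-component locus and factors there as a reduced class times ordinary degree-$0$ classes. Once this is granted, via the cosection/semiregularity mechanism behind the reduced theory, the remainder is the clean dimension estimate above.
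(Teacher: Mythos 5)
Your proof is correct and follows essentially the same route as the paper: the reduced class vanishes on strata where more than one connected component is non-contracted (one trivial obstruction summand too many), and on the remaining strata it factors as a reduced class times ordinary degree-zero virtual classes, after which the contracted factors are shown to vanish. Your explicit dimension count ($m\le h-1$ against the stability bounds, plus the vanishing for $h\ge 2$) just spells out what the paper compresses into ``by \eqref{vir class} one sees that this component contributes zero if $\ell\ge 1$''.
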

\begin{proof}
The reduced virtual class of $\Mbar_{g,n}^{\circ}(S,\beta)$ vanishes on all components which parametrize maps $f:C \to S$ where $f$ is non-constant on more than one connected component,
because then the standard obstruction theory has two trivial summands, but only one is removed by the reduction procedure.
Hence the only contributing components to the left side are
\[ \Mbar_{g',n'}(S,\beta) \times \prod_{i} \Mbar_{g_i, n_i}(S,0) \]
for some $g',n',g_i,n_i$. The reduced virtual class of this component is
\[ [ \Mbar_{g',n'}(S,\beta) ]^{\red} \times \prod_{i=1}^{\ell} [\Mbar_{g_i, n_i}(S,0)]^{\vir}. \]
By \eqref{vir class} one sees that this component contributes zero if $\ell \geq 1$.
\end{proof}
\end{rmk}

\subsection{Elliptic K3 surfaces and generating series} \label{subsec:elliptic K3}
As discussed in the last section, in order to evaluate primitive invariants we can specialize to any K3 surface that we like, as long as it has primitive curve classes of arbitrary square. The most useful K3 surface to choose is an elliptically fibered one.

Hence let $S \to \p^1$ be an elliptic K3 surface with section $B$ and fiber class $F$. Let also $W = B+F$. This choice is made so that $W,F$ span the standard hyperbolic lattice:
\[ W^2 = 0, \quad W \cdot F = 1, \quad F^2=0. \]
We define the multilinear bracket
\[ \left\langle \tau_{k_1}(\gamma_1) \cdots \tau_{k_n}(\gamma_n) \right\rangle^{\GW}
:=
\sum_{h=0}^{\infty} q^{h-1} \left\langle \tau_{k_1}(\gamma_1) \cdots \tau_{k_n}(\gamma_n) \right\rangle^S_{g, B+hF} \]
where on the right 
the genus is specified by the dimension constraint.

If $\gamma \in H^{\ast}(S)$ is an eigenvectors of the operator,
\[ [ B \cdot (-) , \pi^{\ast} \pi_{\ast} ] : H^{\ast}(S) \to H^{\ast}(S) \]
then we let $\wt(\gamma)$ be its eigenvalue. Concretely,
\[
\wt(\gamma) =
\begin{cases}
1 & \text{ if } \gamma \in \{ \pt, W \} \\
-1 & \text{ if } \gamma \in \{ 1, F \} \\
0 & \text{ if } \gamma \perp \{ \pt, 1, W, F \}.
\end{cases}
\]
We assume below that all $\gamma_i$ are homogeneous with respect to this grading.

Recall the following two basic results:

\begin{thm}[{Quasimodularity, \cite{MPT} \text{together with} \cite[Sec.4.6]{BOPY}}] 
\label{thm:quasimodularity}
We have that
\[
\left\langle \tau_{k_1}(\gamma_1) \cdots \tau_{k_n}(\gamma_n) \right\rangle^{\GW} \in \frac{1}{\Delta(q)} \mathsf{QMod}_{2g+n+\sum_i \wt(\gamma_i)}.
\]
\end{thm}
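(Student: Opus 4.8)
The plan is to take the case of primary insertions (all $k_i=0$) as the foundation — this is exactly the quasimodularity theorem of Maulik–Pandharipande–Thomas — and then to reduce the general descendent series to it by trading descendents for ancestors and analyzing the resulting boundary terms. Throughout, the key structural fact is that the reduced class on the elliptic K3 surface $S$ behaves well under degeneration: on any boundary stratum of the space of stable maps only one connected domain component can carry the reduced class in the nonzero fiber-translated class $B+hF$, while the remaining components must be contracted. This is the same mechanism as in the connected-versus-disconnected comparison of the Background section, coming from the two trivial summands in the obstruction theory, and it sharply constrains the combinatorics of the reduction.

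\textbf{The primary base case.} Here I would recall the Gromov–Witten/Noether–Lefschetz correspondence. Embed $S$ in a one-parameter family $\pi : X \to C$ whose fibers are elliptic K3 surfaces, and consider the Gromov–Witten invariants of the threefold $X$ in classes that are fiberwise of the form $B+hF$. The correspondence expresses these threefold invariants as a sum over Noether–Lefschetz loci of products (reduced K3 invariant) $\times$ (Noether–Lefschetz number). The Noether–Lefschetz numbers are Fourier coefficients of a vector-valued modular form by the theorems of Borcherds and Kudla–Millson, and a degeneration analysis of the threefold series then forces the generating series $\langle \cdots \rangle^{\GW}$ of reduced K3 invariants to be quasi-modular. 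The weight is fixed by dimension bookkeeping: each insertion contributes $\wt(\gamma_i)$, there is one unit of weight per marking and $2g$ from the genus, while the factor $\Delta(q)^{-1}$ accounts for the pole and lowers the weight by $12$; this produces the target $\tfrac{1}{\Delta}\QMod_{2g+n+\sum_i \wt(\gamma_i)}$. One checks the weight is self-consistent on the known evaluations, e.g. $(D_qG_2)^n/\Delta$ in the Bryan–Leung case has weight $4n = 2g+n+n$.

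\textbf{Incorporating descendents.} To handle the $\psi_i$ I would use the descendent–ancestor comparison on $\Mbar_{g,n}^{\circ}(S,\beta)$: each cotangent line $\psi_i$ equals the pullback $\bar\psi_i$ of the corresponding class from $\Mbar_{g,n}$ under the stabilization map $p$, up to boundary divisors on which the $i$-th marking sits on a contracted rational bubble. Integrating the $\bar\psi_i$ against the reduced class and pushing forward along $p$ expresses the ancestor part as a pairing on $\Mbar_{g,n}$ of tautological $\psi$-monomials with $p_\ast\big(\prod_i \ev_i^\ast\gamma_i \cap [\Mbar_{g,n}^{\circ}(S,\beta)]^{\red}\big)$. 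Splitting this pushforward along the boundary strata of $\Mbar_{g,n}$, and using the reduced-class constraint above, reduces every term to a product of a single primary reduced K3 series — quasi-modular by the base case — with contracted vertex contributions that are purely numerical. The boundary terms produced by the comparison carry strictly fewer descendents, so the argument closes by induction on $\sum_i k_i$, with the divisor, string and dilaton equations of Lemma~\ref{lemma:divisor string dilaton} available to clear the leftover $\tau_0(D)$, $\tau_0(1)$ and $\tau_1(1)$ insertions. At each step one checks that the net weight shift is $0$, so the final weight matches $2g+n+\sum_i \wt(\gamma_i)$.

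\textbf{Main obstacle.} The real difficulty is the interplay between the reduced virtual class and these reduction steps. Unlike the ordinary virtual class, the reduced class is not multiplicative under splitting, so the degeneration and pushforward formulas must be applied in the form adapted to the reduced theory; keeping track of exactly which component inherits the reduced class on each boundary stratum, and simultaneously verifying that the weight survives the descendent–ancestor exchange (where the bookkeeping of $g$, $n$ and $\sum_i \wt(\gamma_i)$ all shift), is where the work concentrates. The modularity input itself — that the Noether–Lefschetz numbers are modular Fourier coefficients — is deep but may be cited.
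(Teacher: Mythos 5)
The paper does not prove this statement; it is quoted from \cite{MPT} and \cite[Sec.~4.6]{BOPY}, so the only fair comparison is with the argument actually used there. Your sketch follows a genuinely different route, and it has two gaps that I do not see how to close. First, the ``primary base case via Gromov--Witten/Noether--Lefschetz'' is not available in the generality you need. The GW/NL correspondence expresses threefold invariants of a one-parameter family as a sum of (reduced K3 invariants) $\times$ (NL numbers); to extract modularity of the K3 side one must \emph{independently} know the threefold side and then invert the relation. This is feasible in genus $0$ (mirror symmetry for the STU model, as in Klemm--Maulik--Pandharipande--Scheidegger) and for the special $\lambda_g$ insertion (KKV), but the threefold invariants with arbitrary primary insertions in arbitrary genus are not known a priori, so the full primary quasimodularity does not follow this way. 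The actual proof in \cite{MPT} instead degenerates the elliptic K3 into two rational elliptic surfaces glued along an elliptic fiber and ultimately feeds on the Okounkov--Pandharipande quasimodularity of the Gromov--Witten theory of an elliptic curve, combined with an induction driven by tautological relations on $\Mbar_{g,n}$ (Pixton's appendix); \cite[Sec.~4.6]{BOPY} then supplies the refinement to the precise weight $2g+n+\sum_i \wt(\gamma_i)$ for all insertions.

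Second, the descendent-to-ancestor step does not reduce to your base case. After writing $\psi_i = p^{\ast}\bar\psi_i + D_i$ and pushing forward, what remains is the pairing of the Gromov--Witten \emph{class} $p_{\ast}\bigl(\prod_i \ev_i^{\ast}\gamma_i \cap [\Mbar^{\circ}_{g,n}(S,\beta)]^{\red}\bigr)$ with a monomial in the $\bar\psi_i$. The classes $\bar\psi_i$ are not supported on the boundary of $\Mbar_{g,n}$, so this pairing does not ``split along boundary strata'' into primary K3 series times contracted vertex contributions; ancestor invariants are genuinely new numbers, not covered by the primary case. (The disconnected-domain vanishing you invoke constrains which component carries the reduced class, but the correction divisors $D_i$ involve connected domains with contracted rational tails, which is a different mechanism.) This is precisely the point where \cite{MPT} must invoke nontrivial relations in the tautological ring to make the induction on descendents close --- and why their algorithm is as involved as the present paper remarks. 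As written, your induction on $\sum_i k_i$ terminates at ancestor invariants for which no quasimodularity statement has been established.
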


\begin{rmk}
Alternatively, by the dimension constraint the weight is given by
\[ 2g + n + \sum_i \wt(\gamma_i) = \sum_{i} (2 k_i + 2 \deg_{\BC}(\gamma_i) + \wt(\gamma_i) - 1). \]
\end{rmk}


\begin{thm}[{Holomorphic anomaly equation \cite{HAE}}] \label{thm:HAE}
\begin{align*}
\frac{d}{dG_2} \left\langle \tau_{k_1}(\gamma_1) \cdots \tau_{k_n}(\gamma_n) \right\rangle^{\GW}
& = 2 \left\langle \tau_{k_1}(\gamma_1) \cdots \tau_{k_n}(\gamma_n) \tau_0(1) \tau_0(F) \right\rangle^{\GW} \\
& -2 \sum_{i=1}^{n} \left\langle \tau_{k_1}(\gamma_1) \cdots \tau_{k_i+1}(\pi^{\ast} \pi_{\ast} \gamma_i ) \cdots \tau_{k_n}(\gamma_n) \right\rangle^{\GW} \\
& + 20 \sum_{i=1}^{n} (\gamma_i, F) \left\langle \tau_{k_1}(\gamma_1) \cdots \tau_{k_i}(F) \cdots \tau_{k_n}(\gamma_n) \right\rangle^{\GW} \\
& -2 \sum_{i<j} \left\langle \tau_{k_1}(\gamma_1) \cdots \tau_{k_i}(\sigma_1(\gamma_i, \gamma_j)) \cdots \tau_{k_j}( \sigma_2(\gamma_i, \gamma_j)) \cdots \tau_{k_n}(\gamma_n) \right\rangle^{\GW}
\end{align*}
where $\sigma: H^{\ast}(S^2) \to H^{\ast}(S^2)$ is defined by
\[
\sigma(\gamma \boxtimes \gamma') = 0
\ \text{ whenever } \gamma \text{ or } \gamma' \text{ lie in }
H^0(S) \oplus \BQ F  \oplus H^4(S),
\]
and by
\begin{alignat*}{2}
\sigma( W \boxtimes W ) & = \Delta_{V} , \quad \quad \quad
& \sigma( W \boxtimes \alpha ) & = - \alpha \boxtimes F, \\
\sigma( \alpha \boxtimes W ) & = - F \boxtimes \alpha,
& \sigma( \alpha, \alpha' ) & = ( \alpha, \alpha' ) F \boxtimes F
\end{alignat*}
for all $\alpha, \alpha' \in V := \{ W, F \}^{\perp} \subset H^2(S)$.
\end{thm}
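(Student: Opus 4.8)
The plan is to reduce the claimed identity to an equality of explicit quasi-modular forms and then to produce it geometrically by isolating the dependence on the single anomalous generator $G_2$. By Theorem~\ref{thm:quasimodularity} the left-hand side $\frac{d}{dG_2}\langle \tau_{k_1}(\gamma_1)\cdots\tau_{k_n}(\gamma_n)\rangle^{\GW}$ lies in $\frac{1}{\Delta}\QMod$ of weight two less than the bracket, and one checks that each term on the right also lies in $\frac{1}{\Delta}\QMod$ of exactly this weight: the extra pair $\tau_0(1)\tau_0(F)$ lowers the genus by one and contributes $\wt(1)+\wt(F)=-2$; the shift $\tau_{k_i}\mapsto\tau_{k_i+1}(\pi^{\ast}\pi_{\ast}\gamma_i)$ preserves the genus while $\pi^{\ast}\pi_{\ast}$ lowers $\wt$ by two; and similarly for the remaining two terms. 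Thus it suffices to establish an identity of quasi-modular forms, and I would work throughout on the elliptic K3 surface $\pi\colon S\to\mathbb{P}^1$ with $q$ tracking the fiber degree, so that $\langle\cdots\rangle^{\GW}$ is the full generating series in the fiber direction.

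The geometric heart I would attack through a degeneration off a smooth fiber: break the base $\mathbb{P}^1$ into two components meeting at a point, so that $S$ degenerates into two rational elliptic surfaces glued along a smooth fiber $E$ in the class $F$, and apply the degeneration formula. This expresses $\langle\cdots\rangle^{\GW}$ through relative invariants along $E$, whose gluing is governed by the Künneth decomposition of the diagonal of $E$ and, through it, of the hyperbolic summand $\langle W,F\rangle$ together with its orthogonal complement $V$. In this picture each term on the right acquires a meaning: the factor $\tau_0(1)\tau_0(F)$ records the two branches meeting the relative divisor; the shift $\tau_{k_i+1}(\pi^{\ast}\pi_{\ast}\gamma_i)$ reflects a $\psi$-class comparison at the node combined with the fiberwise projection $\pi^{\ast}\pi_{\ast}$ (which sends $\pt\mapsto F$, $W\mapsto 1$, and kills everything else); the term $20(\gamma_i,F)\tau_{k_i}(F)$ comes from the normal and Euler contributions of the fiber inside $S$; and the double-marking tensor $\sigma$ encodes how two insertions are routed through $E$. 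The appearance of $\frac{d}{dG_2}$ itself is explained by the fact that the relative theory of the elliptic fiber is only quasi-modular, its non-holomorphic completion $G_2\mapsto\widehat{G}_2$ being modular; differentiating in the completion parameter is, up to a constant, the operator $\frac{d}{dG_2}$, and its value is computed precisely by the boundary and gluing contributions produced by the degeneration.

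Because \cite{HAE} derives the equation using the MPT algorithm, I expect the cleanest rigorous route to be inductive rather than a single degeneration computation. Concretely, I would (i) verify the equation directly on a spanning set of base cases (few markings and low genus, where both sides are computable from the MPT solution), and (ii) show that both sides obey the same propagation rules under adding a marking, using the divisor, string and dilaton equations of Lemma~\ref{lemma:divisor string dilaton} together with the commutation relation $[\frac{d}{dG_2}, D_q]=-2\,\wt$ of \eqref{commutation relation}; this commutator is exactly what keeps the weight bookkeeping consistent once the insertion of the divisor $W$ is converted into the operator $D_q$. The equation then propagates from the base cases to every invariant the MPT algorithm reaches.

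The main obstacle will be pinning down the precise constants and the exact tensor $\sigma$. Matching the coefficient $20$ requires identifying it with the correct combination of the Euler number $c_2(S)=24$ and the reduced-theory correction, and the unit shift in $\tau_{k_i+1}$ must be traced through a careful $\psi$-class analysis at the gluing node. Hardest of all is the pairwise term: one must show that the correction attached to a pair of markings is governed exactly by $\sigma$, vanishing whenever an insertion lies in $H^0(S)\oplus\mathbb{Q}F\oplus H^4(S)$ and equal to $\Delta_V$, $-\alpha\boxtimes F$, $-F\boxtimes\alpha$, or $(\alpha,\alpha')F\boxtimes F$ on the remaining components. Establishing this demands tight control over how the hyperbolic summand $\langle W,F\rangle$ and its complement $V$ split across the relative divisor, which is the most delicate piece of the degeneration bookkeeping.
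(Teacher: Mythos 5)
You should first note that the paper contains no proof of this statement: Theorem~\ref{thm:HAE} is recalled verbatim from \cite{HAE}, where it is established as part of the proof of the Igusa cusp form conjecture by running the full induction scheme of \cite{MPT} and checking compatibility of the anomaly operator with every step of that algorithm. So the benchmark is the proof in \cite{HAE}, and measured against it your proposal is a plan rather than a proof, with two concrete gaps. First, the degeneration pillar: the standard degeneration formula does not apply as stated to the \emph{reduced} virtual class, and the careful bookkeeping of where the reduction goes when the elliptic K3 degenerates into rational elliptic surfaces glued along a fiber is precisely the delicate input from \cite{MPT}; your term-by-term readings (the $\tau_0(1)\tau_0(F)$ insertion from ``branches meeting the relative divisor'', the $20$ from ``Euler contributions of the fiber'') are heuristics, and you explicitly defer the determination of the coefficients $-2$, $20$ and of the tensor $\sigma$ as ``obstacles''. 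But those coefficients and $\sigma$ \emph{are} the content of the theorem; a derivation that stops before fixing them has proved nothing beyond the (easy) statement that both sides lie in the same weight-graded piece of $\frac{1}{\Delta}\QMod$, which indeed follows from Theorem~\ref{thm:quasimodularity} and \eqref{commutation relation}. Likewise, the remark that $\frac{d}{dG_2}$ is differentiation in the non-holomorphic completion $\widehat{G}_2$ is motivation only; in the paper's setting $\QMod=\BC[G_2,G_4,G_6]$ is a free polynomial algebra and the identity to be proven is one of formal $q$-series, so nothing is gained by invoking the completion.

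Second, the inductive pillar as you describe it would fail to span. The divisor, string and dilaton equations of Lemma~\ref{lemma:divisor string dilaton} only remove the insertions $\tau_0(D)$, $\tau_0(1)$ and $\tau_1(1)$; they give no propagation rule for adding a general descendent $\tau_k(\gamma)$ with $k \geq 2-\deg_{\BC}(\gamma)$, so ``base cases plus propagation under adding a marking'' does not reach the general invariant. The actual MPT algorithm proceeds by an intricate induction over genus and descendent level whose engine is a collection of tautological relations on $\Mbar_{g,n}$ (Pixton's appendix to \cite{MPT}) combined with degeneration; to prove the anomaly equation along these lines one must verify that the claimed right-hand side is compatible with \emph{each} relation and with the splitting under degeneration, and this compatibility check --- in particular the interaction of $\frac{d}{dG_2}$ with the diagonal splitting that produces the pairwise $\sigma$-term, including its vanishing on $H^0(S)\oplus\BQ F\oplus H^4(S)$ and the values $\Delta_V$, $-\alpha\boxtimes F$, $-F\boxtimes\alpha$, $(\alpha,\alpha')F\boxtimes F$ --- is the substantive work of \cite{HAE} and is entirely absent from your sketch. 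In short: your outline correctly identifies the ingredients (elliptic K3, degeneration along a fiber, MPT induction, weight bookkeeping), but every step at which the specific shape of the equation could be pinned down is deferred, so the proposal does not constitute a proof of the statement.
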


We also recall a basic splitting statement.
Define the normalized correlators:
\begin{align*}
\left\langle \tau_{k_1}(\gamma_1) \cdots \tau_{k_n}(\gamma_n) \right\rangle^{\GW, \prime}
& =
\frac{ \left\langle \tau_{k_1}(\gamma_1) \cdots \tau_{k_n}(\gamma_n) \right\rangle^{\GW} }{ \left\langle 1 \right\rangle^{\GW} } \\
& = \Delta(q) \left\langle \tau_{k_1}(\gamma_1) \cdots \tau_{k_n}(\gamma_n) \right\rangle^{\GW}.
\end{align*}

The standard degeneration argument given in \cite{MPT} yields:

\begin{lemma} \label{lemma:splitting formula}
Assume that $\gamma_i \in \{ F, \pt \}$ for all $i$, and $\alpha_j \in \{ B, F \}^{\perp} \subset H^2(S)$. Then
\[ \left\langle \prod_{i=1}^{n} \tau_{k_i}(\gamma_i) \prod_{j=1}^{n'} \tau_{\ell_j}(\alpha_j)  \right\rangle^{\GW, \prime}
=
\prod_{i=1}^{n} \left\langle \tau_{k_i}(\gamma_i) \right\rangle^{\GW, \prime} \cdot \left\langle \prod_{j=1}^{n'} \tau_{\ell_j}(\alpha_j)  \right\rangle^{\GW, \prime}. \]
\end{lemma}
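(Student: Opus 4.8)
The plan is to prove the splitting formula by the degeneration method of \cite{MPT}, adapted so as to carry the descendent insertions. Since by Theorem~\ref{thm:dependence} the invariants depend on $(S,\beta,\gamma_1,\dots)$ only through the stated intersection data, I am free to work on a convenient elliptic K3 surface $S \to \mathbb{P}^1$ and to place the insertion classes in generic geometric position. First I would set up the basic degeneration: degenerate the base $\mathbb{P}^1$ to a nodal union of two copies of $\mathbb{P}^1$, which induces a semistable degeneration of the total space into a union $R_1 \cup_E R_2$ of two rational elliptic surfaces glued transversally along a smooth elliptic fibre $E$. Under this, the section $B$ restricts to a section on each $R_i$ and the fibre class $F$ to the fibre class on either side. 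Because each $\alpha_j$ satisfies $\alpha_j \perp \{B,F\}$ and $\alpha_j \cdot \beta = 0$, I would represent it by a cycle disjoint from $E$ and from the marked fibres, supported away from the moving fibre components of the degenerate curves.

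The core step is the behaviour of the \emph{reduced} virtual class in this degeneration. Because the holomorphic symplectic form of $S$ degenerates along $E$, the reduced class does not obey the naive degeneration formula; instead, as established in \cite{MPT}, it degenerates to a sum of products of \emph{relative} Gromov--Witten classes of the pairs $(R_i,E)$ in which exactly one of the two factors carries the reduced (nontrivial) contribution while the other carries an ordinary relative class. I would invoke this degeneration of the reduced class verbatim from \cite{MPT}; it is the one genuinely non-formal ingredient. The descendents cause no additional trouble: the markings carrying $\tau_{k_i}(F)$, $\tau_{k_i}(\pt)$ and $\tau_{\ell_j}(\alpha_j)$ are interior (not relative) markings, and since $E$ is chosen disjoint from all the point and cycle representatives, the cotangent line classes $\psi_i$ restrict to the corresponding $\psi$ classes on the component carrying the $i$-th marking. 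Hence the descendent degeneration formula applies with no $\psi$-corrections at the node.

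With the degeneration formula in hand, I would analyse how the insertions route across the two sides and over the curve class. The orthogonality $\alpha_j \perp \{B,F\}$ together with $\alpha_j \cdot \beta = 0$ means the $\alpha_j$-constraints are invisible to the fibre components and attach only to the essential (section-type) component; combined with the support choice this collects all the $\alpha_j$ on a single factor, yielding $\langle \prod_j \tau_{\ell_j}(\alpha_j)\rangle^{\GW,\prime}$. Dually, each $\tau_{k_i}(F)$ or $\tau_{k_i}(\pt)$ forces its marked point onto a fibre, so by induction on $n$ (peeling off one marked fibre at a time into its own rational elliptic bubble) each such insertion is carried by an independent factor. Summing the generating series over $h$, the degeneration formula writes $\langle \cdots \rangle^{\GW}$ as a convolution of these single-fibre contributions with the $V$-contribution. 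The final bookkeeping step is to pass to the normalized correlators: the very definition of $\langle \cdots \rangle^{\GW,\prime}$ records $\langle 1 \rangle^{\GW} = \Delta(q)^{-1}$, so dividing each of the $n+1$ factors by its vacuum term removes the over-counted empty-fibre series and converts the convolution into the asserted product, with the $\{F,\pt\}$ insertions split individually and the $\alpha_j$ grouped.

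I expect the main obstacle to be the reduced-class degeneration invoked in the second paragraph: the statement that the reduced class splits into relative classes with the reducedness concentrated on exactly one side is the technical heart of \cite{MPT}, and I would rely on it rather than reprove it. Everything else reduces to the formal descendent degeneration formula and the linear-algebra bookkeeping by which $\alpha_j \perp \{B,F\}$ routes the insertions, together with the elementary normalization identity built into the definition of the primed correlators.
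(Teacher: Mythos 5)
Your proposal follows essentially the same route as the paper, whose entire proof of this lemma is the one-line citation of the standard degeneration argument of \cite{MPT}: your fleshed-out sketch (degeneration over a nodal base into rational elliptic surfaces glued along a fiber, the degeneration of the reduced class imported from \cite{MPT}, the observation that interior descendent markings introduce no corrections, the routing of the $\alpha_j$ versus the $F,\pt$ insertions by the intersection data, and the normalization by $\left\langle 1 \right\rangle^{\GW}$) is exactly the intended argument. The only loose point is your phrasing that ``exactly one of the two factors carries the reduced contribution'' --- the rational elliptic surfaces are not holomorphic symplectic and carry only ordinary relative classes, the correct bookkeeping being that the section class can lie on only one side --- but this is harmless since you explicitly defer to \cite{MPT} for the precise form of the reduced-class degeneration.
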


\subsection{Two more remarks}
We end this section with two more remarks, which will not be used later on.
\begin{rmk}
We can restate Conjecture~\ref{conj:GW stationary} in the language of elliptic K3 surfaces as follows. Define
\[ Z(\gamma_0, \gamma_1, \ldots ) 
:= \left\langle \exp\left( \sum_{k \geq 0} \tau_k(\gamma_k) \right) \right\rangle^{\GW}. \]
Then Conjecture~\ref{conj:GW stationary} is equivalent the statement: for all $\gamma_i$ with $\deg_{\BC}(\gamma_i)>0$ we have
\[ Z(\gamma_0, \gamma_1, \ldots ) = 
\frac{1}{\Delta(q)}
\exp\left( \sum_{k \geq 0} (\gamma_k, W + D_{q} F ) A_k(q)
+ \sum_{k \geq 0} (\gamma_k, 1) B_k(q)
+ \frac{1}{2} \sum_{k,\ell \geq 0} (\gamma_k \cdot \gamma_{\ell}) C_{k \ell}(q) \right)
\]
where the $D_{q}$'s in the formula above stand for commuting the operators to the left and
applying them to the full series. 
\end{rmk}

\begin{rmk}
By the degeneration argument of \cite{MPT} we obtain relatively nice expressions for the two most basic invariants
\[
\langle \tau_k(F) \rangle^{\GW}, \quad \langle \tau_k(\pt) \rangle^{\GW}
\]
This was already used in \cite[App.B]{MPT}. Explicit formulas can be found in \cite[Appendix]{K3xP1}.
Let us recall the expression in the first case:

Consider for $g \geq 0$ (with obvious notation) the following generating series of Gromov--Witten invariants of an elliptic curve $E$,
\[
L_{g,k}
= 
\begin{cases}
\left\langle \BE^{\vee}(1) \frac{\ev_1^{\ast}(\pt)}{1-\psi_1}  \ev_2^{\ast}(1) \psi_2^k \right\rangle^{E}_{g} & g \geq 1 \\
(-1)^k & g = 0,
\end{cases}
\]
where we used the inverse Hodge class
\[ \BE^{\vee}(1) = 1 - \lambda_1 + \lambda_2 + \ldots + (-1)^g \lambda_g. \]
Then one has:
\[ \blangle \tau_k(F) \brangle^{\GW} = \frac{1}{\Delta(q)} \sum_{g=0}^k L_{g,k} \frac{ (-G_2)^{k-g}}{(k-g)!}. \]
The right hand side can be effectively computed using the methods of \cite{OP}.
However, it is unclear to the author how to deduce the corresponding case of Conjecture~\ref{conj:GW stationary} from this.
\end{rmk}

\section{Descendents of $1$} \label{sec:descendents of 1}
We present here the new recursion that allows one to compute a large class of descendent invariants effectively in terms of the stationary theory.

\subsection{Removing insertions with $W$}
Let $S \to \p^1$ be an elliptic K3 surface as in Section~\ref{subsec:elliptic K3}
and set
\[ V = \{ x \in H^2(S,\BC)\, | \, x \cdot W = x \cdot F = 0 \}. \]
For $\delta_i \in V$ arbitrary classes and $d \geq 1$ consider a general descendent invariant
\[
\left\langle
\prod_{i=1}^{r} \tau_{k_i}(1) 
\prod_{i=1}^{s} \tau_{\ell_i}(F)
\prod_{i=1}^{s'} \tau_{\ell'_i}(W)
\prod_{i=1}^{t} \tau_{m_i}(\delta_i)
\prod_{i=1}^{u} \tau_{n_i}(\pt)
\right\rangle^S_{\beta=W+dF}.
\]

Our first observation is that whenever there are not too many $\delta_i$, one can get rid of the factors $\tau_{\ell}(W)$.
\begin{prop} \label{prop:Removing W}
Assume that there are $\alpha_1, \alpha_2 \in V$ such that
\[ \alpha_1^2 = \alpha_2^2 = 0, \quad \alpha_1 \cdot \alpha_2 = 1, \quad \forall i:\ \alpha_1 \cdot \delta_i = \alpha_2 \cdot \delta_i = 0. \]
Then we have
\begin{gather*}
\left\langle
\prod_{i=1}^{r} \tau_{k_i}(1) 
\prod_{i=1}^{s} \tau_{\ell_i}(F)
\prod_{i=1}^{s'} \tau_{\ell'_i}(W)
\prod_{i=1}^{t} \tau_{m_i}(\delta_i)
\prod_{i=1}^{u} \tau_{n_i}(\pt)
\right\rangle^S_{W+dF} \\
=
\left\langle
\prod_{i=1}^{r} \tau_{k_i}(1) 
\prod_{i=1}^{s} \tau_{\ell_i}(F+\alpha_2)
\prod_{i=1}^{s'} \tau_{\ell'_i}(dF+\alpha_1)
\prod_{i=1}^{t} \tau_{m_i}(\delta_i)
\prod_{i=1}^{u} \tau_{n_i}(\pt)
\right\rangle^S_{W+dF}.
\end{gather*}
\end{prop}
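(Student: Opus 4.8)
The plan is to deduce the identity directly from the deformation-invariance result Theorem~\ref{thm:dependence}: since the descendent orders $k_i, \ell_i, \ell'_i, m_i, n_i$ and the class $\beta = W + dF$ are identical on the two sides, it suffices to check that the two $n$-tuples of cohomology insertions carry the same invariant-determining data listed in (i)--(iv) of that theorem. The insertions $\tau_{k_i}(1)$ and $\tau_{n_i}(\pt)$ are literally unchanged, so they agree on their cohomological degrees and on the integrals $\int_S \pt \cdot 1$ and $\int_S \pt$; moreover their pairings against any class in $H^2(S)$ vanish for degree reasons, so they contribute nothing to the comparison of (iv). Thus the entire verification concerns the complex-degree-one insertions, where $F$ is replaced by $F + \alpha_2$ and $W$ by $dF + \alpha_1$. (The coefficients $\alpha_i$ may be complex, but this is harmless, since Theorem~\ref{thm:dependence} is proven over $H^{\ast}(S,\BC)$.)

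First I would record the $\beta$-pairings. Because $\alpha_1, \alpha_2 \in V$ are orthogonal to both $W$ and $F$, they are orthogonal to $\beta = W + dF$, whence
\[ \beta \cdot (F + \alpha_2) = \beta \cdot F = 1, \qquad \beta \cdot (dF + \alpha_1) = d\,(\beta \cdot F) = d = \beta \cdot W, \]
matching the values on the left. Next I would check the mutual pairings among the degree-one insertions. The self-pairings all vanish on both sides, since $(F + \alpha_2)^2 = F^2 + 2\, F\cdot\alpha_2 + \alpha_2^2 = 0 = W^2 = (dF + \alpha_1)^2$ using $F \cdot \alpha_i = 0$ and $\alpha_i^2 = 0$. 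The one nontrivial cross term is
\[ (F + \alpha_2)\cdot(dF + \alpha_1) = \alpha_2 \cdot \alpha_1 = 1 = F \cdot W, \]
which is precisely where the hypothesis $\alpha_1 \cdot \alpha_2 = 1$ enters. Finally, every pairing of an $F$- or $W$-type insertion against a $\delta_j$ vanishes on both sides, because $F, W \perp \delta_j$ (as $\delta_j \in V$) and $\alpha_1 \cdot \delta_j = \alpha_2 \cdot \delta_j = 0$ by assumption, while the $\delta_i \cdot \delta_j$ are untouched. Together with the unchanged divisibility and square of $\beta$, this exhausts the data (i)--(iv).

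With all of (i)--(iv) verified, Theorem~\ref{thm:dependence} gives the claimed equality. There is no real obstacle here: the proposition is an exact application of deformation invariance, and the content lies entirely in the choice of $\alpha_1, \alpha_2$. These are engineered so that the pair $\{F + \alpha_2,\, dF + \alpha_1\}$ reproduces the hyperbolic intersection pattern of $\{F, W\}$ both against $\beta$ and against each other, while remaining orthogonal to all the $\delta_i$; the only point requiring care is to confirm that no pairing has been accidentally altered, in particular the cross term above, which is the whole reason for the hypotheses on $\alpha_1, \alpha_2$.
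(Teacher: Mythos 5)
Your proposal is correct and follows exactly the paper's own argument: the paper likewise observes that the tuples $(W+dF, W, F, \delta_i)$ and $(W+dF, dF+\alpha_1, F+\alpha_2, \delta_i)$ have identical intersection pairings and invokes Theorem~\ref{thm:dependence}. Your write-up merely spells out the pairing checks in more detail than the paper does.
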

\begin{proof}
The set of classes
\[ (W+dF, W, F, \delta_i), \quad (W+dF, dF + \alpha_1, F+\alpha, \delta_i) \]
have the same intersection pairings. Hence the claim follows from Proposition~\ref{thm:dependence}.
(What is curious is that there is no isometry that can send $W+dF, W,F$ to $W+dF, dF+\alpha_1, F+\alpha_2$, since the second set of vectors is linearly independent, while the first one is not. Nevertheless, we still have an equality of Gromov--Witten invariants.)
\end{proof}

\subsection{The main recursion step}
Since we have Proposition~\ref{prop:Removing W},
let us consider an invariant where there are no $W$-factors:
\[
I = \left\langle
\prod_{i=1}^{r} \tau_{k_i}(1) 
\prod_{i=1}^{s} \tau_{\ell_i}(F)
\prod_{i=1}^{t} \tau_{m_i}(\delta_i)
\prod_{i=1}^{u} \tau_{n_i}(\pt)
\right\rangle^{\GW},
\]
where $\delta_i \in V$. By the string and dilaton equation we can assume that $k_i \geq 2$ for all $i$.

Assume that there are classes $\alpha_1, \alpha_2 \in H^2(S,\BC)$ such that
\[ \alpha_1^2 = \alpha_2^2 = 0, \quad \alpha_1 \cdot \alpha_2 = 1, \quad \forall i:\ \alpha_1 \cdot \delta_i = \alpha_2 \cdot \delta_i = 0. \]
We give a formula for $I$ that involves only $(r-1)$ many factors of $\tau_k(1)$.
Consider the modified invariant
\[
I_W = 
\left\langle
\tau_{k_1}(W)
\prod_{i=2}^{r} \tau_{k_i}(1) 
\prod_{i=1}^{s} \tau_{\ell_i}(F)
\prod_{i=1}^{t} \tau_{m_i}(\delta_i)
\prod_{i=1}^{u} \tau_{n_i}(\pt)
\right\rangle^{\GW}.
\]

\begin{lemma} \label{lemma:path 1} We have
\begin{equation} \label{fwe333_2}
\frac{d}{dG_2} I_W
=
-2 I + (...)
\end{equation}
where $(...)$ stands for terms involving invariants with $\leq (r-1)$ factors of $\tau_{k}(1)$.
\end{lemma}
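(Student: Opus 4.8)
\emph{The plan} is to apply the holomorphic anomaly equation (Theorem~\ref{thm:HAE}) directly to $I_W$ and then organize the four resulting lines according to how many descendent insertions $\tau_k(1)$ they contain. Since $I_W$ contains the $r-1$ insertions $\tau_{k_2}(1),\dots,\tau_{k_r}(1)$, the assertion is that exactly one term on the right-hand side converts the single $W$ back into a descendent of $1$, producing the main term $-2I$, while every other term carries at most $r-1$ factors of $\tau(1)$. The argument therefore reduces to evaluating the three operations appearing in the HAE --- $\pi^{\ast}\pi_{\ast}$, the pairing $(-,F)$, and $\sigma$ --- on the classes $W,1,F,\delta_i,\pt$ that occur as insertions.

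First I would record the action of $\pi^{\ast}\pi_{\ast}$. Because $B$ is a section of $\pi$, one has $\pi_{\ast}W=\pi_{\ast}(B+F)=1$, whereas $\pi_{\ast}1=\pi_{\ast}F=\pi_{\ast}\delta_i=0$ and $\pi_{\ast}\pt$ is the point class of $\p^1$; pulling back gives
\[
\pi^{\ast}\pi_{\ast}W=1,\qquad \pi^{\ast}\pi_{\ast}\pt=F,\qquad \pi^{\ast}\pi_{\ast}1=\pi^{\ast}\pi_{\ast}F=\pi^{\ast}\pi_{\ast}\delta_i=0.
\]
The identity $\pi^{\ast}\pi_{\ast}W=1$ is the engine of the recursion: in the second line of the HAE the $W$-insertion is sent to a $\tau(1)$-insertion, and this is the unique term in the whole formula in which the single $W$ reappears as a descendent of $1$. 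It is the only term carrying the full complement of $r$ descendents of $1$, and it contributes the main term $-2I$. The other contributions of the second line replace some $\tau_{n_i}(\pt)$ by $\tau_{n_i+1}(F)$ and leave the $W$ untouched, hence retain only $r-1$ factors of $\tau(1)$.

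Next I would dispose of the third and fourth lines. In the third line the coefficient $(\gamma_i,F)$ vanishes for every insertion except $W$, where $(W,F)=1$, so this line merely turns the $W$ into $F$ and lands in $(\ldots)$. In the fourth line $\sigma(\gamma\boxtimes\gamma')$ vanishes as soon as one of $\gamma,\gamma'$ lies in $H^0(S)\oplus\BQ F\oplus H^4(S)$, so the only surviving pairs of insertions are $(W,\delta_i)$ and $(\delta_i,\delta_j)$; by the rules $\sigma(W\boxtimes\alpha)=-\alpha\boxtimes F$ and $\sigma(\alpha,\alpha')=(\alpha,\alpha')F\boxtimes F$ both of these destroy the $W$ and create $F$'s, so each again carries at most $r-1$ descendents of $1$.

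The delicate point, and the one I expect to be the main obstacle, is the first line $2\langle I_W\,\tau_0(1)\,\tau_0(F)\rangle$, which introduces a fresh insertion $\tau_0(1)$ and so naively appears to carry $r$ descendents of $1$. Here I would invoke the string equation (Lemma~\ref{lemma:divisor string dilaton}) to eliminate this $\tau_0(1)$: distributing the cotangent-line lowering over the remaining insertions sends each $\tau_{k_i}(1)$ to $\tau_{k_i-1}(1)$ and $\tau_{k_1}(W)$ to $\tau_{k_1-1}(W)$ without ever creating a new descendent of $1$, while the degree-zero boundary terms pair $\tau_0(1)$ with another index-zero insertion $\tau_0(\gamma)$ weighted by $\int_S\gamma$, which is nonzero only for $\gamma=\pt$ and then likewise lowers the count. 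Hence every term produced by the first line lies in $(\ldots)$ as well. Assembling the four lines yields $\frac{d}{dG_2}I_W=-2I+(\ldots)$ with $(\ldots)$ involving only invariants having at most $r-1$ factors $\tau_k(1)$, as asserted. The only genuine work is the careful bookkeeping of this $\tau_0(1)$-elimination and of the descendent indices under $\pi^{\ast}\pi_{\ast}$.
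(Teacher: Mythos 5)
Your proof is correct and follows exactly the paper's argument: apply the holomorphic anomaly equation to $I_W$, identify the $i=1$ term of the second line (via $\pi^{\ast}\pi_{\ast}W=1$) as the source of $-2I$, reduce the first line by the string equation, and observe that the remaining terms never create a new descendent of $1$. Your write-up is in fact more detailed than the paper's two-line proof; the only nitpick is that the string-equation correction terms pair two of the \emph{other} index-zero insertions (weighted by $\int_S\gamma_i\gamma_j$) rather than pairing $\tau_0(1)$ with one of them, but this does not affect the conclusion.
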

\begin{proof}
This follows immediately from applying the holomorphic anomaly equation (Theorem~\ref{thm:HAE}) to $I_W$. Here the first term on the right side of the holomorphic anomaly equation can be reduced by the string equation (Lemma~\ref{lemma:divisor string dilaton}) to involve only $r-1$ factors of $\tau_k(1)$. The second term for $i=1$ yields the term $-2I$ since $\pi^{\ast} \pi_{\ast}(W) = 1$. The other terms do not create any new $\tau_k(1)$ factors.
\end{proof}

On the other hand, we can also first apply Proposition~\ref{prop:Removing W} to $I_W$ before applying the holomorphic anomaly equation.
First, by Proposition~\ref{prop:Removing W} we have:
\begin{align*}
I_W = &
D_{q}\left( \left\langle
\tau_{k_1}(F)
\prod_{i=2}^{r} \tau_{k_i}(1) 
\prod_{i=1}^{s} \tau_{\ell_i}(F)
\prod_{i=1}^{t} \tau_{m_i}(\delta_i)
\prod_{i=1}^{u} \tau_{n_i}(\pt)
\right\rangle^{\GW} \right) \\
& + \left\langle
\tau_{k_1}(\alpha_1)
\prod_{i=2}^{r} \tau_{k_i}(1) 
\prod_{i=1}^{s} \tau_{\ell_i}(F + \alpha_2)
\prod_{i=1}^{t} \tau_{m_i}(\delta_i)
\prod_{i=1}^{u} \tau_{n_i}(\pt)
\right\rangle^{\GW} 
\end{align*}
(where we don't have any $\alpha_2$ in the first line by applying Theorem~\ref{thm:dependence}).
Next we apply $\frac{d}{dG_2}$. Using the commutation relation \eqref{commutation relation}
we obtain three terms:
\begin{equation} \label{fwe333}
\begin{aligned}
\frac{d}{dG_2} I_W = &
-2 \wt \left( \left\langle
\tau_{k_1}(F)
\prod_{i=2}^{r} \tau_{k_i}(1) 
\prod_{i=1}^{s} \tau_{\ell_i}(F)
\prod_{i=1}^{t} \tau_{m_i}(\delta_i)
\prod_{i=1}^{u} \tau_{n_i}(\pt)
\right\rangle^{\GW} \right) \\
& + 
D_{q} \frac{d}{dG_2} \left\langle
\tau_{k_1}(F)
\prod_{i=2}^{r} \tau_{k_i}(1) 
\prod_{i=1}^{s} \tau_{\ell_i}(F)
\prod_{i=1}^{t} \tau_{m_i}(\delta_i)
\prod_{i=1}^{u} \tau_{n_i}(\pt)
\right\rangle^{\GW} \\
& +
\frac{d}{dG_2}
\left\langle
\tau_{k_1}(\alpha_1)
\prod_{i=2}^{r} \tau_{k_i}(1) 
\prod_{i=1}^{s} \tau_{\ell_i}(F + \alpha_2)
\prod_{i=1}^{t} \tau_{m_i}(\delta_i)
\prod_{i=1}^{u} \tau_{n_i}(\pt)
\right\rangle^{\GW}.
\end{aligned}
\end{equation}
The $\frac{d}{dG_2}$-derivative can be evaluated by the holomorphic anomaly equation plus the string equation. The outcome is that all terms only involve at most $(r-1)$ descendents of $1$. (Since the only way new descendents of $1$ appear is via the $\pi^{\ast} \pi_{\ast}(\gamma_i)$ term, hence from cohomology insertions $\gamma_i=W$).

We now simply equate \eqref{fwe333} with \eqref{fwe333_2} and solve for $I$. This gives:
\begin{prop} \label{prop:Removing 1}
Assume that there are classes $\alpha_1, \alpha_2 \in H^2(S,\BC)$ such that
\[ \alpha_1^2 = \alpha_2^2 = 0, \quad \alpha_1 \cdot \alpha_2 = 1, \quad \forall i:\ \alpha_1 \cdot \delta_i = \alpha_2 \cdot \delta_i = 0. \]
Let (...) stand for the same term as in Lemma~\ref{lemma:path 1}. Then
\[
-2 I \ =\  (\textup{Right hand side of } \eqref{fwe333})
\ +\  (...).
\]
In particular,
$I$ can be expressed as a sum of invariants
involving $\leq (r-1)$ factors $\tau_{k}(1)$.
\end{prop}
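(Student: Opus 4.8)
The plan is to compute the single series $\frac{d}{dG_2} I_W$ along two different routes and to equate the outcomes. The first route is Lemma~\ref{lemma:path 1}: applying the holomorphic anomaly equation (Theorem~\ref{thm:HAE}) directly to $I_W$ and simplifying the resulting $\tau_0(1)$ by the string equation gives equation~\eqref{fwe333_2},
\[
\frac{d}{dG_2} I_W = -2 I + (\ldots).
\]
The second route produces equation~\eqref{fwe333}: one first rewrites $I_W$ by Proposition~\ref{prop:Removing W}, trading the leading insertion $\tau_{k_1}(W)$ for a $D_q$-derivative term carrying $\tau_{k_1}(F)$ together with a correction term carrying $\tau_{k_1}(\alpha_1)$, and only afterwards applies $\frac{d}{dG_2}$, using the commutation relation~\eqref{commutation relation} to push $\frac{d}{dG_2}$ past $D_q$ (this produces the weight term $-2\wt(\cdots)$).

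Granting both computations, the identity asserted in the proposition is pure linear algebra. Since the two right-hand sides represent the same series $\frac{d}{dG_2} I_W$, I would set the right-hand side of~\eqref{fwe333} equal to $-2I + (\ldots)$ and solve for $I$, which gives
\[
-2 I = (\text{right-hand side of }\eqref{fwe333}) + (\ldots).
\]
The sign in front of $(\ldots)$ is immaterial, since this symbol only abbreviates a finite sum of invariants carrying at most $r-1$ descendents of $1$, a class of terms that is stable under negation.

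The substantive content — and the step I expect to be the main obstacle — is the ``in particular'' clause: one must check that \emph{every} term produced on the right of~\eqref{fwe333} really carries at most $r-1$ factors $\tau_k(1)$. The decisive structural point is that, after the substitution of Proposition~\ref{prop:Removing W}, no cohomology insertion equals $W$ any more: the surviving classes are $F$, $\alpha_1$, $F+\alpha_2$, the $\delta_i \in V$, and $\pt$. I would then go through the four terms of the holomorphic anomaly equation and locate every place where a fresh unit insertion could be created. New units can arise only through the operator $\pi^{\ast}\pi_{\ast}$ in the second term, and on $H^2(S)$ one has $\pi^{\ast}\pi_{\ast}(D) = (D \cdot F)\,1$, so a unit is produced only from a divisor pairing nontrivially with $F$; since all surviving divisor insertions are orthogonal to $F$ (here $\alpha_1, \alpha_2 \in V$, as already required to apply Proposition~\ref{prop:Removing W}) and $\pi^{\ast}\pi_{\ast}\pt = F$ yields only a $\tau(F)$, no new unit appears. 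The third (divisor-type) term merely replaces an insertion by $F$, and the fourth ($\sigma$-)term outputs solely $\Delta_V$, $\alpha \boxtimes F$, $F \boxtimes \alpha$ or $F \boxtimes F$ — never the unit — because $\sigma$ vanishes as soon as one argument lies in $H^0(S) \oplus \BQ F \oplus H^4(S)$. The only real subtlety is the first term $2\langle \cdots \tau_0(1)\tau_0(F) \rangle^{\GW}$, which momentarily introduces a single $\tau_0(1)$; this is removed by the string equation of Lemma~\ref{lemma:divisor string dilaton}, whose redistribution either lowers a surviving $\tau_k(1)$ to $\tau_{k-1}(1)$ or falls on a non-unit insertion, so in every case the number of units drops back to at most $r-1$. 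Collecting these observations shows that $I$ has been rewritten in terms of invariants with $\leq (r-1)$ factors $\tau_k(1)$, as asserted.
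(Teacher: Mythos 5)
Your proposal is correct and follows essentially the same route as the paper: the paper likewise obtains the identity by equating the two computations \eqref{fwe333_2} and \eqref{fwe333} of $\frac{d}{dG_2} I_W$ and solving for $I$, and justifies the ``in particular'' clause by the same observation that after Proposition~\ref{prop:Removing W} no insertion equals $W$, so the $\pi^{\ast}\pi_{\ast}$ term of the holomorphic anomaly equation cannot create new units (the paper states this in a one-line parenthetical, which you have usefully expanded, including the correct handling of the $\tau_0(1)\tau_0(F)$ term via the string equation and of the sign ambiguity in $(\ldots)$).
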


We get our general recursion by first applying Proposition~\ref{prop:Removing W}, then applying Proposition~\ref{prop:Removing 1}, then repeating the process. In each step we have one less factor of $\tau_k(1)$. After finitely many steps we are left with evaluating a stationary invariant.
This proves Theorem~\ref{thm:recursion} (to go from the general K3 surface to the elliptic one, one uses Theorem~\ref{thm:dependence}).

\subsection{An example of the recursion}
The above recursion can be made explicit in a very basic, but still somewhat general case.
We omit the somewhat tedious computation and just state the result.

Let $S \to \p^1$ be the elliptic K3 and consider a orthogonal decomposition
\[ H^2(S,\BZ) = U_1 \oplus U_2 \oplus L \]
where $U_1 = \mathrm{Span}(W,F)$ and $U_2 = \mathrm{Span}(\alpha_1, \alpha_2)$ with
\[ \alpha_1^2 = \alpha_2^2 = 0, \alpha_1 \cdot \alpha_2 = 1. \]

\begin{thm} \label{thm:more general example of recursion}
For any $\gamma_1, \ldots, \gamma_n \in \{ F, \pt, L \}$ and $k \geq 2$ we have
\begin{gather*}
\blangle \tau_{k}(1) \tau_{k_1}(\gamma_1) \cdots \tau_{k_n}(\gamma_n) \brangle^{\GW}
= \\
\left( 2k-4 -n + |\{ i : \gamma_i = \pt \}| + 2\sum_{i=1}^{n} (k_i + \deg_{\BC}(\gamma_i)) \right)
\blangle \tau_{k-1}(F) \tau_{k_1}(\gamma_1) \cdots \tau_{k_n}(\gamma_n) \brangle^{\GW} \\
+ \blangle \tau_{k-2}(\pt) \tau_0(1) \prod_{i=1}^{n} \tau_{k_i}(\gamma_i) \brangle^{\GW}  \\
- \sum_{i: \gamma_i = \pt} \blangle
\tau_{k-1}(\alpha_1) \tau_{k_i+1}(\alpha_2)
\prod_{j \neq i} \tau_{k_j}(\gamma_j) \brangle^{\GW} \\
+ \sum_{i: \gamma_i \in L} \blangle \tau_{k_i}(F) \tau_{k-1}(\gamma_i) \prod_{j \neq i} \tau_{k_j}(\gamma_j) \brangle^{\GW} \\
- \sum_{i \neq j : \gamma_i, \gamma_j \in L} (\gamma_i, \gamma_j) \blangle \tau_{k-1}(\alpha_1)
\tau_{k_i}(\alpha_2) \tau_{k_j}(F)
\prod_{\ell \neq i,j} \tau_{k_{\ell}}(\gamma_{\ell})
\brangle^{\GW}.
\end{gather*}
\end{thm}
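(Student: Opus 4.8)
The plan is to carry out explicitly the recursion of Proposition~\ref{prop:Removing 1} in the special case $r=1$, where a single application removes the unique descendent $\tau_k(1)$. By Theorem~\ref{thm:dependence} I may work on the elliptic K3 surface $S\to\p^1$ with the orthogonal splitting $H^2(S,\BZ)=U_1\oplus U_2\oplus L$, $U_1=\mathrm{Span}(W,F)$, $U_2=\mathrm{Span}(\alpha_1,\alpha_2)$, so that the classes $\alpha_1,\alpha_2$ demanded by Proposition~\ref{prop:Removing W} are available and orthogonal to every $\gamma_i\in\{F,\pt\}\cup L$. Writing $I=\blangle\tau_k(1)\prod_i\tau_{k_i}(\gamma_i)\brangle^{\GW}$, I introduce the auxiliary series $I_W=\blangle\tau_{k-1}(W)\prod_i\tau_{k_i}(\gamma_i)\brangle^{\GW}$, obtained by placing a single $W$ at descendent level $k-1$ so that the $\pi^{\ast}\pi_{\ast}$-term of the anomaly equation returns $\tau_k(1)$ and hence $-2I$ (Lemma~\ref{lemma:path 1}). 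The whole computation then rests on evaluating $\frac{d}{dG_2}I_W$ in two ways.

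First (Path 1), I apply the holomorphic anomaly equation (Theorem~\ref{thm:HAE}) directly to $I_W$. The values $\pi^{\ast}\pi_{\ast}(W)=1$, $\pi^{\ast}\pi_{\ast}(\pt)=F$ and $\pi^{\ast}\pi_{\ast}(F)=\pi^{\ast}\pi_{\ast}(\delta)=0$ show that the $\pi^{\ast}\pi_{\ast}$-term yields $-2I$ from the $W$-insertion together with $-2\sum_{\gamma_i=\pt}\blangle\tau_{k-1}(W)\cdots\tau_{k_i+1}(F)\cdots\brangle$. The pairing $(W,F)=1$ makes the third anomaly term equal to $20\,\blangle\tau_{k-1}(F)\prod_i\tau_{k_i}(\gamma_i)\brangle$, while the rules for $\sigma$ leave only $\sigma(W\boxtimes\delta)=-\delta\boxtimes F$ and $\sigma(\delta\boxtimes\delta')=(\delta,\delta')F\boxtimes F$ in the last term. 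The first anomaly term carries $\tau_0(1)\tau_0(F)$; I remove $\tau_0(F)$ by the divisor equation (using $\beta\cdot F=1$ and $W\cup F=\pt$, so that the point-class piece is exactly the future summand $\blangle\tau_{k-2}(\pt)\tau_0(1)\prod_i\tau_{k_i}(\gamma_i)\brangle$) and $\tau_0(1)$ by the string equation. After these reductions several terms still contain a $W$-insertion, which I rewrite by a second use of Proposition~\ref{prop:Removing W}, thereby producing terms with an operator $D_q$ in front.

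Second (Path 2), I first apply Proposition~\ref{prop:Removing W} to $I_W$ to write $I_W=D_qP+Q$, where $P=\blangle\tau_{k-1}(F)\prod_i\tau_{k_i}(\gamma_i)\brangle$ and $Q$ is the analogous series with $\tau_{k-1}(\alpha_1)$ and each $F$-insertion replaced by $F+\alpha_2$. Differentiating and using the commutation relation $[\frac{d}{dG_2},D_q]=-2\wt$ gives $\frac{d}{dG_2}I_W=-2\wt(P)+D_q\frac{d}{dG_2}P+\frac{d}{dG_2}Q$, exactly the shape of \eqref{fwe333}. I then evaluate $\frac{d}{dG_2}P$ and $\frac{d}{dG_2}Q$ by the holomorphic anomaly equation; since neither $P$ nor $Q$ contains a $W$, no new $\tau(1)$ appears, and the essential new contribution is the $\sigma$-term $\sigma(\alpha_1\boxtimes(F+\alpha_2))=F\boxtimes F$ inside $\frac{d}{dG_2}Q$, which (after discarding the now-harmless $\alpha_2$ by Theorem~\ref{thm:dependence}) reproduces a multiple of $P$ equal to the number of $F$-insertions.

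Finally I equate the two expressions for $\frac{d}{dG_2}I_W$ and solve for $I$, i.e. $I=\tfrac12(R_1-R_2)$ with $R_1$ the remaining Path 1 terms and $R_2$ the Path 2 expression. The coefficient $2k-4-n+|\{i:\gamma_i=\pt\}|+2\sum_i(k_i+\deg_{\BC}(\gamma_i))$ of $\blangle\tau_{k-1}(F)\prod_i\tau_{k_i}(\gamma_i)\brangle$ then assembles from three sources: the weight eigenvalue $\wt(P)$ (computed from the weight formula in Theorem~\ref{thm:quasimodularity}, remembering the shift $-12$ coming from the factor $1/\Delta$), the $20(W,F)$ of the third anomaly term, and the $F$-counting $\sigma(\alpha_1\boxtimes(F+\alpha_2))$ contribution; the other summands match the remaining terms of the statement, the $\alpha$-dependent ones arising from the $\alpha_1$-parts of Proposition~\ref{prop:Removing W} together with the expansion $\tau_{\ell}(F+\alpha_2)=\tau_{\ell}(F)+\tau_{\ell}(\alpha_2)$. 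The main obstacle is purely organizational: one must check that every $D_q$-term cancels, which occurs in matched pairs between the second use of Proposition~\ref{prop:Removing W} in Path 1 and the term $D_q\frac{d}{dG_2}P$ in Path 2, and one must treat the boundary cases $k_i\in\{0,1\}$ and the induced string/divisor collision terms consistently with the convention $\tau_k(\gamma)=\delta_{k+2,0}\int_S\gamma$ for $k<0$. This bookkeeping, rather than any new idea, is exactly what makes the computation tedious, which is why only the result is recorded.
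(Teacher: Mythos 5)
Your proposal is correct and follows essentially the same route as the paper's own (omitted) computation: one forms the auxiliary series with a single $W$-insertion, evaluates its $\frac{d}{dG_2}$-derivative once directly via the holomorphic anomaly equation and once after first applying Proposition~\ref{prop:Removing W} together with the commutator $[\frac{d}{dG_2},D_q]=-2\wt$, and solves for the $\tau_k(1)$-term; your identification of the three sources of the coefficient of $\blangle\tau_{k-1}(F)\cdots\brangle$ (the weight including the $-12$ from $1/\Delta$, the $20(W,F)$ term, and the $F$-count from $\sigma(\alpha_1\boxtimes(F+\alpha_2))=F\boxtimes F$) matches the paper's bookkeeping. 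The only cosmetic difference is organizational: the paper cancels the unexpanded $\tau(W)\cdots\tau_0(1)\tau_0(F)$ terms between the two paths before applying the divisor/string equations, whereas you expand both sides fully and cancel the $D_q$-terms in matched pairs, which yields the same result.
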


\section{Polynomial behaviour} \label{sec:polynomial behaviour}
The main goal of this section is to characterize the functions $A_k, B_k, C_{k \ell}$ defined in \eqref{A}, \eqref{B}, \eqref{C} in terms of some basic qualitative properties.
This will provide the connection between Conjecture~\ref{conj:GW stationary} (which evaluates explicitly the stationary theory)
and Conjecture~\ref{conj:polynomiality} (which describes the polynomial structure).

\subsection{Characterization of the $A$ series}
Recall the definition of the $A$-series:
\begin{equation} \label{AA} A_k(q) = \frac{(-1)^k}{(2k+1)!!} \mathrm{Res}_{z=0}\left[ (\wp(z) - 4 G_2)^{k+\frac{1}{2}} \right]. \end{equation}
We want to prove here the following characterization:
\begin{thm} \label{thm:Characterization2}
The series of functions 
$A_k(q)$, $k \geq 0$ (defined in \eqref{A})
is the unique series of power series satisfying the following conditions:
\begin{enumerate}[itemsep=0pt]
\item[(a)] $A_k(q)$ is a quasi-modular form of weight $2k$ satisfying $\frac{d}{dG_2} A_k = 2 A_{k-1}$ (with $A_{-1}=0$),
\item[(b)] For every $n \geq 0$ there exists a polynomial $p_n(k)$ of degree $2n$ such that
for every $k \geq n$ we have
\[ \Big[ A_k(q) \Big]_{q^n} = 
\frac{1}{(-4)^k (2k+1)!!}
p_n(k), \]
\item[(c)] $A_0 = 1 + O(q)$
\end{enumerate}
\end{thm}

We will first prove the uniqueness part of Theorem~\ref{thm:Characterization2}:

\begin{lemma} \label{lemma:uniqueness}
There is at most one series $A_k(q)$ for $k \geq 0$ satisfying conditions (a-c) of
Theorem~\ref{thm:Characterization2}.
\end{lemma}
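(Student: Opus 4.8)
The plan is to argue by comparing two hypothetical solutions and reducing everything to their ``$G_2$-constant parts'', after which the whole problem collapses to a degree count that pits the degree bound $2n$ in (b) against the slow growth of $\dim M_{2m}$.

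First I would exploit the structure $\QMod = \BC[G_4,G_6][G_2]$: every quasi-modular form of weight $2k$ is uniquely a polynomial in $G_2$ with modular coefficients, so I write $A_k = \sum_{j\ge 0} b_{k,j}\,G_2^{\,j}$ with $b_{k,j}\in M_{2(k-j)}$. Since $\frac{d}{dG_2}$ is the formal $G_2$-derivative, condition (a) becomes $(j+1)\,b_{k,j+1} = 2\,b_{k-1,j}$, which solves to $b_{k,j} = \frac{2^{\,j}}{j!}\,b_{k-j}$, where $b_m := b_{m,0}\in M_{2m}$ is the $G_2$-free part. Thus the entire family is recovered from the sequence of modular forms $(b_m)_{m\ge 0}$ via $A_k = \sum_{j\ge 0}\frac{2^{\,j}}{j!}\,b_{k-j}\,G_2^{\,j}$, and condition (c) pins down $b_0 = 1$ (a weight-zero modular form is a constant). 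So proving uniqueness of $(A_k)$ is the same as proving uniqueness of $(b_m)$.

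Now suppose $(A_k)$ and $(A_k')$ both satisfy (a)--(c); set $c_m := b_m - b_m' \in M_{2m}$ and $E_k := A_k - A_k' = \sum_{j\ge 0}\frac{2^{\,j}}{j!}\,c_{k-j}\,G_2^{\,j}$, with $c_0 = 0$. I would show $c_m = 0$ for all $m$ by strong induction. Assuming $c_0 = \cdots = c_{m-1} = 0$, every term of $E_k$ vanishes for $k \le m-1$, so $E_k = 0$ there, while $E_m = c_m$. Subtracting the two instances of condition (b) for a fixed $n$ produces a polynomial $q_n$ of degree $\le 2n$ with $(-4)^k(2k+1)!!\,[E_k]_{q^n} = q_n(k)$ for all $k \ge n$. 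The vanishing of $E_k$ on $n \le k \le m-1$ then exhibits $m-n$ distinct roots $k = n,\dots,m-1$ of $q_n$. The point is to run this over exactly the range $0 \le n \le \dim M_{2m}-1$ needed to determine $c_m$: a level-one modular form in $M_{2m}$ is determined by its first $d_m := \dim M_{2m}$ Fourier coefficients, and one has $d_m - 1 \le m/6$, so for every such $n$ the count $m - n \ge 5m/6$ exceeds the degree bound $2n \le m/3$. Hence each $q_n$ has more roots than its degree and vanishes identically; evaluating at $k = m$ gives $[c_m]_{q^n} = 0$ for all $n < d_m$, whence $c_m = 0$.

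The main obstacle is precisely this last degree count, and it is the only step where the hypotheses must be used quantitatively rather than formally: one must confirm that the number $m-n$ of forced roots beats the bound $2n$ uniformly across $0 \le n \le \dim M_{2m}-1$. This rests on the estimate $\dim M_{2m} - 1 \le m/6$ (which I would read off the standard dimension formula for $M_{2m}$) together with the standard consequence of the valence formula that a nonzero weight-$2m$ level-one modular form vanishes to order at most $\dim M_{2m}-1$ in $q$, so that finitely many Fourier coefficients determine the form. Once these two facts are in hand, the induction closes and everything else is formal bookkeeping.
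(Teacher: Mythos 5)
Your proof is correct and follows essentially the same strategy as the paper's: induct, use condition (a) to reduce the unknown at each stage to a single modular form, and then pit the degree bound $2n$ from (b) against the number of already-determined values of the interpolating polynomial, closing the loop with the fact that a level-one modular form is determined by its first $\dim M_{2m}$ Fourier coefficients (the paper's Lemma~\ref{Modconstraint}). The only difference is presentational — you make the reduction explicit via the expansion $A_k=\sum_j \frac{2^j}{j!}b_{k-j}G_2^j$ and induct on the $G_2$-free parts $b_m$, whereas the paper inducts on $A_k$ directly — and your bookkeeping of the numerical bounds ($n\le m/6$, $m-n>2n$) is, if anything, slightly more careful than the paper's.
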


To prove the lemma we will use the following well-known fact:
\begin{lemma} \label{Modconstraint}
Let $f \in \Mod_k$.
If $[f(q)]_{q^{\ell}} = 0$ for all
$\ell \leq \lfloor \frac{k}{12} \rfloor$,
then $f(q) = 0$.
\end{lemma}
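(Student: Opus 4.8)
The plan is to deduce this from the valence formula (the ``$k/12$ formula'') for holomorphic modular forms on $\SL_2(\BZ)$. First I would dispose of the trivial cases: if $k$ is odd or $k<0$ then $\Mod_k = 0$ and there is nothing to prove, so I may assume $k\ge 0$ is even and, arguing by contradiction, that $f\neq 0$.

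Next I would recall the valence formula: for a nonzero $f\in \Mod_k$,
\[
\nu_{\infty}(f) + \tfrac{1}{2}\nu_{i}(f) + \tfrac{1}{3}\nu_{\rho}(f) + \sum_{P}\nu_P(f) = \frac{k}{12},
\]
where $\nu_\infty(f)$ is the order of vanishing of $f$ at the cusp (that is, the least $\ell$ with $[f]_{q^{\ell}}\neq 0$), where $\nu_i(f),\nu_\rho(f)$ are the orders of vanishing at the elliptic points $i$ and $\rho=e^{2\pi i/3}$, and where the remaining sum runs over the other $\SL_2(\BZ)$-orbits in $\BH$. Since $f$ is holomorphic (including at the cusp), every term on the left is non-negative.

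The key step is then purely numerical. The hypothesis says $[f]_{q^{\ell}}=0$ for all $\ell\le \lfloor k/12\rfloor$, which means the first possibly nonzero coefficient sits in degree $\ge \lfloor k/12\rfloor+1$; that is, $\nu_\infty(f)\ge \lfloor k/12\rfloor+1$. Because $\lfloor k/12\rfloor+1 > k/12$, non-negativity of the other terms forces the left-hand side of the valence formula to be strictly larger than $k/12$, contradicting the equality. Hence no nonzero $f$ can satisfy the hypothesis, and $f=0$.

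I expect no real obstacle beyond invoking the valence formula, which is classical (e.g.\ Serre, \emph{A Course in Arithmetic}); the only point requiring care is pairing the strict inequality $\lfloor k/12\rfloor+1 > k/12$ with the non-negativity of all orders of vanishing, which is exactly what upgrades ``vanishing one step past $k/12$'' into an outright contradiction. As a self-contained alternative avoiding the full valence formula, I would instead divide by the discriminant: setting $N:=\lfloor k/12\rfloor+1$, the hypothesis gives $\nu_\infty(f)\ge N$, so $f/\Delta^{N}$ is a holomorphic modular form of weight $k-12N<0$ (using that $\Delta$ is nowhere vanishing on $\BH$ and has a simple zero at the cusp); since there are no nonzero holomorphic modular forms of negative weight, $f/\Delta^{N}=0$ and therefore $f=0$.
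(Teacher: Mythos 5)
Your argument is correct: the paper states this lemma as a ``well-known fact'' and gives no proof at all, so there is nothing to compare against. Both of your routes --- the valence formula combined with $\nu_\infty(f)\ge \lfloor k/12\rfloor+1 > k/12$, and the division by $\Delta^{\lfloor k/12\rfloor+1}$ to land in a space of negative-weight holomorphic modular forms --- are the standard proofs and are complete as written.
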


\begin{proof}[Proof of Lemma~\ref{lemma:uniqueness}]
We argue by induction on $k$ that $A_0, \ldots, A_{k-1}$ uniquely determine $A_k$.
By (a) and (c) we have $A_0=1$.
Hence assume that $A_{i}$ is known for $i \leq k-1$.
By condition (a) and Lemma~\ref{Modconstraint} it is enough to know the first $\lfloor k/12 \rfloor$ Fourier-coefficients of $A_k$.
Hence let $n \leq k/12$ and let us find $[A_k(q)]_{q^n}$.
By (b) and since $k \geq n$ we have 
\[ [A_k(q)]_{q^n} = \frac{1}{(-4)^k (2k+1)!!} p_n(k). \]
The polynomial $p_n(k)$ is of degree $2n \leq k/6$.
By induction we know the value of $p_n$ for all $k'$ where
$n \leq k' < k$, in particular for all $k/12 \leq k' < k$.
Hence $p_n$ is uniquely determined already by these values, and hence so is $p_n(k)$.
%
\end{proof}

Next we prove that $A_k(q)$ defined by \eqref{AA}
satisfies the conditions (a-c). For (a) and (c) this is easy by considering the $z$-expansion of $\wp(z)$, and left to the reader. 
We need to prove (b).
We do this in two parts. The first part is also not difficult:

Let $p=e^{z}$ and consider the Fourier expansion of the Weierstrass elliptic function:\footnote{To distinguish between the Fourier expansion $\wp(p,q)$ and the $z$-series $\wp(z)$ we will always include the $q$-dependence in the former in the notation.}
\[
\wp(p,q) = \frac{1}{12} + \frac{p}{(1-p)^2} + \sum_{d \geq 1} \sum_{k | d} k (p^k - 2 + p^{-k}) q^{d}.
\] 
We obtain that:
\[
\wp(p,q) - 4 G_2(q) = \frac{1}{4} \frac{(p+1)^2}{ (p-1)^2 } + (p - 6 + p^{-1}) q + \ldots .
\]
Consider now the square root, taken formally as a power series in $q$ with coefficients Laurent series
(this is possible since the $q^0$-coefficient is a square):
\begin{equation} \sqrt{ \wp(p,q) - 4G_2(q) } = \frac{1}{2} \frac{(p+1)}{(p-1)}
+ \frac{(p-1) (p-6+p^{-1})}{(p+1)} q + a_2(p) q^2 + a_3(p) q^3 + \ldots \label{expansion_g} \end{equation}
where $a_n(p) = p^{-n} b_n(p)/(p+1)^{2n-1}$ for some polynomial $b_n(p) \in \BQ[p]$.

\begin{prop} \label{lemma:p0 coefficient}
There exists polynomials $p_n(k)$ of degree $2n$ such that for all $k$:
\[ \left[ \sqrt{ \wp(p,q) - 4G_2(q) }^{2k+1} \right]_{p^0} = \frac{1}{2^{2k+1}}\sum_{n \geq 0} p_n(k) q^n. \]
\end{prop}
\begin{proof}
The series $F(p,q) = 4 (\wp(p,w) - 4 G_2(q))$ and $H(p,q) = 2\sqrt{\wp(p,q) - 4 G_2(q)}$ are both of the form
\[ (1 + a_{01} p + a_{02} p^2 + \ldots ) + (a_{1,-1} p^{-1} + a_{10} + \ldots ) q + ( a_{2,-2} p^{-2} + a_{2,-1} p^{-1} + a_{20} + \ldots ) q^2 + \ldots \]
for some $a_{ij} \in \BQ$.
Hence $F(p,qp)$ and $H(p,qp)$ lie in $\BC[[p,q]]$ and the result follows from
Lemma~\ref{lemma:combinatorics}(c) below.
\end{proof}
\begin{lemma} \label{lemma:combinatorics}
\begin{enumerate}
	\item[(a)] Let $f \in 1+R[[q]]$ be a power series with constant term one and coefficients in a ring $R$.
	Then the $q^n$-coefficient of $f^k$ is polynomial of degree $\leq n$ in $k$ with coefficients in $R$.
	\item[(b)] Let $f=\sum_{i,j \geq 0} a_{ij} p^i q^j$ and $h=\sum_{i,j \geq 0} h_{ij} p^i q^j$ be power series in $p,q$ with $a_{00}=1$. Then the coefficient
	$[ f^k h ]_{p^m q^n}$ is a polynomial in $k$ of degree $\leq m+n$. If $h_{00}=0$, then it is of degree $\leq m+n-1$.
\end{enumerate}
\end{lemma}
\begin{proof}
For (a) by the binomial theorem we can write $f^k = \sum_{i \geq 0} \binom{k}{i} q^i (1 + a_2 q^1 + a_3 q^2 + \ldots )^i$. Taking the $q^n$-coefficient the claim follows since only terms with $i \leq n$ contribute.
Part (b) in case $h=1$ reduces to part (a) by factoring out $f_0 = 1 + \sum_{i \geq 1} a_{i0} p^i$
and applying (a) twice. The case of general $h$ reduces to case $h=1$ by multiplying out.
\end{proof}

The second, upcoming step is to compare the $[ - ]_{p^0}$ and $[ - ]_{z^{-1}}$ coefficient.
Together with Prop~\ref{lemma:p0 coefficient} it immediately implies property (b) of Theorem~\ref{thm:Characterization2}.

\begin{prop}
For any $k \geq 0$ we have
\[
\mathrm{Res}_{z=0} (\wp - 4 G_2)^{k+\frac{1}{2}} = 2 \left[ \sqrt{ \wp(p,q) - 4G_2(q) }^{2k+1} \right]_{p^0} + O(q^{k+1})
\]
\end{prop}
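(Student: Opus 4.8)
The plan is to pass to the $q$-expansion, in which the \emph{formal} square root becomes a genuine rational function of $p$, and then to read off both sides as residues of a single meromorphic $1$-form on $\p^1$. Write
\[
\Phi := \sqrt{ \wp(p,q) - 4G_2 }^{\,2k+1} = \sum_{n \geq 0} G_n(p)\, q^n,
\]
where, by the Fourier expansion of $\wp$ and the shape $a_n(p) = p^{-n} b_n(p)/(p+1)^{2n-1}$ recorded in \eqref{expansion_g}, each $G_n(p)$ is rational with poles only at $p \in \{0,1,-1\}$. Set $\omega_n := G_n(p)\,\frac{dp}{p}$. The whole argument then becomes a comparison, coefficient by coefficient in $q$, of two ways of extracting information from $\omega_n$.

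First I would identify each side as a residue. For the right-hand side, I expand $G_n$ in the region $|p|>1$ (the region compatible with the branch $\sqrt{\wp-4G_2} = \tfrac12 \frac{p+1}{p-1} + \cdots$ fixed in \eqref{expansion_g}); since all finite poles lie in $\{0,1,-1\}$ this gives
\[
\big[ G_n \big]_{p^0} = \frac{1}{2\pi i} \oint_{|p|=\rho} \omega_n = \mathrm{Res}_{p=0}\,\omega_n + \mathrm{Res}_{p=1}\,\omega_n + \mathrm{Res}_{p=-1}\,\omega_n .
\]
For the left-hand side, I read $\mathrm{Res}_{z=0}$ as $\frac{1}{2\pi i}\oint_{|z|=\epsilon}$, substitute $p = e^z$ so that $dz = dp/p$, and expand in $q$ under the integral sign (uniformly convergent on the fixed contour for $|q|$ small). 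As $z=0$ corresponds to $p=1$, this yields
\[
\Big[ \mathrm{Res}_{z=0}(\wp-4G_2)^{k+\frac12} \Big]_{q^n} = \mathrm{Res}_{p=1}\,\omega_n .
\]

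The comparison is then forced by the symmetry $\iota : p \mapsto 1/p$. Since $\wp$ is even, $\sqrt{\wp-4G_2}$ is odd under inversion, and as $2k+1$ is odd we get $G_n(1/p) = -G_n(p)$; a direct check shows $\iota^{\ast}\omega_n = \omega_n$. Hence $\mathrm{Res}_{p=0}\,\omega_n = \mathrm{Res}_{p=\infty}\,\omega_n$, and the residue theorem on $\p^1$ gives $2\,\mathrm{Res}_{p=0}\,\omega_n + \mathrm{Res}_{p=1}\,\omega_n + \mathrm{Res}_{p=-1}\,\omega_n = 0$. Substituting into the two expressions above,
\[
\Big[ \mathrm{Res}_{z=0}(\wp-4G_2)^{k+\frac12} \Big]_{q^n} - 2\big[ G_n \big]_{p^0} = -\,\mathrm{Res}_{p=-1}\,\omega_n ,
\]
so the whole statement reduces to showing that $\mathrm{Res}_{p=-1}\,\omega_n$ vanishes for $n \leq k$ (note how the symmetry simultaneously produces the factor $2$). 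For this I would analyse $\Phi$ locally at $p=-1$: there $\wp - 4G_2 = \tfrac{1}{16}\big((p+1)^2 - 128\,q\big) + \cdots$, so expanding $\Phi \approx \big(\tfrac{p+1}{4}\big)^{2k+1}\big(1 - \tfrac{128 q}{(p+1)^2}\big)^{k+\frac12}$ shows that $G_n$ vanishes to order $2k+1-2n$ at $p=-1$. For $n \leq k$ this order is positive, so $\omega_n$ is regular at $p=-1$ and its residue there is $0$; thus the $q^n$-coefficients agree for all $n \leq k$, which is exactly the asserted equality modulo $O(q^{k+1})$.

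The steps I expect to require the most care are: (i) justifying the interchange of the $z$-Laurent expansion (with quasimodular coefficients, defining the left-hand side) with the $q$-expansion (with $p$-rational coefficients), i.e.\ that both compute the same analytic residue of $\Phi$, together with pinning down the region/sign convention for $[\,\cdot\,]_{p^0}$ so that it really equals the displayed sum of residues; and (ii) the local computation at $p=-1$ establishing the vanishing order $2k+1-2n$, since this single estimate is what simultaneously delivers both the $O(q^{k+1})$ error bound and the fact that the identity is sharp at order $q^{k+1}$.
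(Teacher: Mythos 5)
Your argument is correct, and it takes a genuinely different route from the paper. The paper works analytically on the elliptic curve: it integrates $g(x)^{2k+1}\,dx$ over the boundary of a fundamental domain, establishes the anti-periodicity $g(x+\tau)=-g(x)$ by a winding-number argument (this is what produces the factor $2$ and identifies the boundary integral with $2[\,\cdot\,]_{p^0}$), and then bounds the leftover branch-cut contribution between the two zeros $x_i=\tfrac12\pm\tfrac{4}{\pi}q^{1/2}+\cdots$ of $\wp-4G_2$ by an asymptotic estimate of an integral over a shrinking interval. You instead work $q$-coefficient by $q$-coefficient on $\p^1$, where each $G_n$ is a rational function: both sides become residues of $\omega_n=G_n\,dp/p$, the factor $2$ comes from the inversion symmetry $p\mapsto 1/p$ (giving $\mathrm{Res}_0=\mathrm{Res}_\infty$), the residue theorem replaces the contour deformation, and the error term is exactly $-\mathrm{Res}_{p=-1}\omega_n$, killed for $n\le k$ by a pole-order count. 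Note that the two proofs localize the obstruction at the same point ($x=1/2\leftrightarrow p=-1$) and use parallel symmetries ($\tau$-translation vs.\ $p\mapsto 1/p$); what your version buys is the elimination of all convergence, branch-cut and asymptotic analysis, together with a structural explanation of why $O(q^{k+1})$ is sharp. Two points deserve the care you already flag: first, $[\,\cdot\,]_{p^0}$ must be taken in the region $|p|>1$ (your choice is the right one --- the $|p|<1$ expansion would give $-1$ instead of $+1$ at $k=n=0$ --- and it matches the paper's contour, whose relevant edge lies below the real axis); second, the vanishing of $G_n$ to order $2k+1-2n$ at $p=-1$ should not rest only on the leading local model $\tfrac{1}{16}\bigl((p+1)^2-128q\bigr)$, but it follows rigorously by induction on $n$ from $\Phi^2=(\wp-4G_2)^{2k+1}$ together with the observation that, assigning weight $1$ to $(p+1)$ and weight $2$ to $q$, every $q^n$-coefficient of $(\wp-4G_2)^{2k+1}$ vanishes to order at least $4k+2-2n$ at $p=-1$. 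With those two points spelled out, your proof is complete.
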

\begin{proof}
We let $z = 2\pi i x$ and define
\[ f(x) = \wp(2 \pi i x) - 4 G_2, \quad g(x) := \sqrt{\wp(2 \pi i x) - 4 G_2}.  \]
(In our convention $\wp(z)$ is double-periodic under $z \mapsto z + m + n \cdot 2 \pi i \tau$ for $m,n \in \BZ$. The scaled function $\wp(2 \pi i x)$ is then double-periodic under $x \mapsto x+m + n \tau$ and is the usual convention for the Weierstra{\ss} elliptic function found in the literature.)
The function $g(x)$ is a priori multivalued.
However, in a neighbourhood of $x=0$ we can choose the unique branch such that the Laurent expansion of $g(x)$ starts with $1/(2 \pi i x)$.
Recall that the Weierstrass function $\wp(2 \pi i x)$ and hence also $f(x)$ has two zeros $x_1,x_2$ in each fundamental domain, counted with multiplicities.
(Since $f(-x) = f(x)$ is even, we have $x_2 = -x_1$ up to translation by an element of the lattice $\BZ \tau + \BZ$.)
When extending $g(x)$ to the whole plane $\BC_{x}$ we hence run into the sign ambiguity when extending beyond the zeros.
The possible solution is to extend the function away from an appropriate branch cut of these two zeros (and its translates).

In the limit $|q| \ll 1$ the situation is as follows: Observe that
\[ f(x)|_{q=0} = \frac{1}{4} \left( \frac{e^{2 \pi i x} + 1}{ e^{2 \pi i x} - 1} \right)^2 \]
has a double zero at $x=\frac{1}{2}$ and its $\BZ$-translates,
so for small $q$ the two zeros of $f(x)$ are close to $x=1/2$ (modulo $\BZ$-translates)
and so we can choose the branch cut between these two adjacent zeros.
We now integrate over the boundary $\gamma$ of a fundamental domain with left corner at $a_0$,
for some $|q| \ll 1$, as in Figure~\ref{FigureBranchcut}.

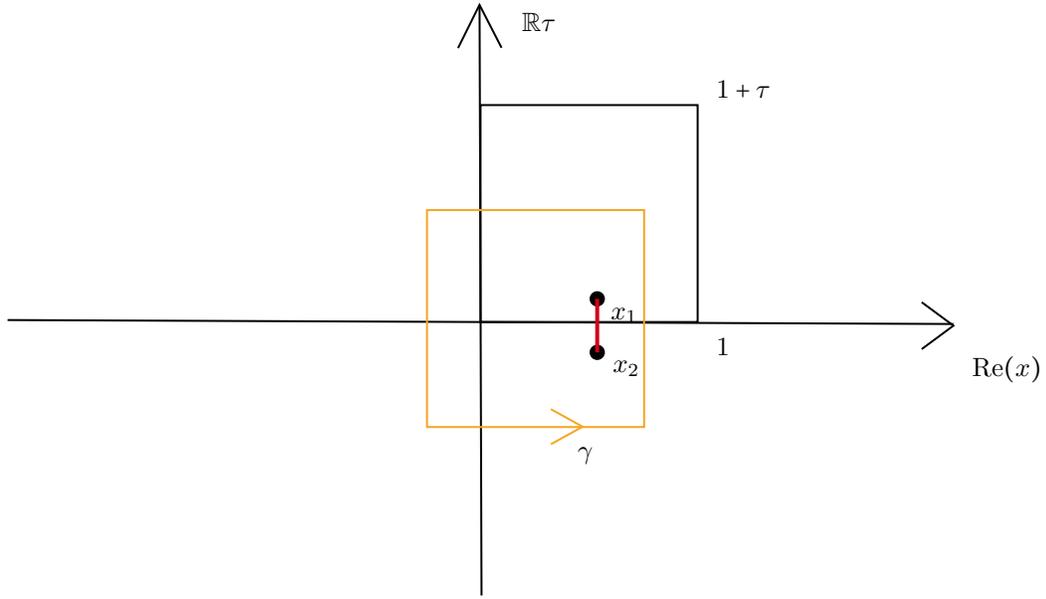
\begin{figure}
\centering


\tikzset{every picture/.style={line width=0.75pt}} 

\begin{tikzpicture}[x=0.75pt,y=0.75pt,yscale=-1,xscale=1]

\draw    (321,1.32) -- (322,299.32) ;
\draw    (83,160.32) -- (560,162.32) ;
\draw   (544,151.32) -- (560,163.16) -- (544,175) ;
\draw   (310.16,23.05) -- (320.98,1.24) -- (332.11,22.89) ;
\draw   (321.5,51.82) -- (431,51.82) -- (431,161.32) -- (321.5,161.32) -- cycle ;
\draw  [fill={rgb, 255:red, 0; green, 0; blue, 0 }  ,fill opacity=1 ] (376.99,176.68) .. controls (376.96,174.82) and (378.45,173.29) .. (380.31,173.27) .. controls (382.16,173.24) and (383.69,174.73) .. (383.72,176.58) .. controls (383.74,178.44) and (382.26,179.97) .. (380.4,179.99) .. controls (378.54,180.02) and (377.02,178.53) .. (376.99,176.68) -- cycle ;
\draw  [fill={rgb, 255:red, 0; green, 0; blue, 0 }  ,fill opacity=1 ] (376.99,149.68) .. controls (376.96,147.82) and (378.45,146.29) .. (380.31,146.27) .. controls (382.16,146.24) and (383.69,147.73) .. (383.72,149.58) .. controls (383.74,151.44) and (382.26,152.97) .. (380.4,152.99) .. controls (378.54,153.02) and (377.02,151.53) .. (376.99,149.68) -- cycle ;
\draw  [color={rgb, 255:red, 245; green, 166; blue, 35 }  ,draw opacity=1 ] (294.5,104.82) -- (404,104.82) -- (404,214.32) -- (294.5,214.32) -- cycle ;
\draw  [color={rgb, 255:red, 245; green, 166; blue, 35 }  ,draw opacity=1 ] (357,205.32) -- (373,214.16) -- (357,223) ;
\draw [color={rgb, 255:red, 208; green, 2; blue, 27 }  ,draw opacity=1 ][line width=1.5]    (380.35,149.63) -- (380.35,176.63) ;

\draw (439,38) node [anchor=north west][inner sep=0.75pt]   [align=left] {$1+\tau$};
\draw (385.72,152.58) node [anchor=north west][inner sep=0.75pt]   [align=left] {$x_1$};
\draw (369,223) node [anchor=north west][inner sep=0.75pt]   [align=left] {$\gamma$};
\draw (568,177) node [anchor=north west][inner sep=0.75pt]   [align=left] {$\mathrm{Re}(x)$};
\draw (341,4) node [anchor=north west][inner sep=0.75pt]   [align=left] {$\BR \tau$};
\draw (386.72,179.58) node [anchor=north west][inner sep=0.75pt]   [align=left] {$x_2$};
\draw (439,168) node [anchor=north west][inner sep=0.75pt]   [align=left] {$1$};

\end{tikzpicture}
\caption{We integrate over the orange path $\gamma$. We assume $|q| \ll 1$, so that the roots $x_1, x_2$ of $f(x)$ are close to $x=1/2$.
 The function $g(x) = \sqrt{f(x)}$ is defined away from the branch cut (the red line).
\label{FigureBranchcut}}
\end{figure}

Consider the winding number\footnote{The winding number along a closed path of $f$ along $\gamma$ is defined by $\int_{f \circ \gamma} \frac{dz}{z} = \int_{\gamma} \frac{f'(x)}{f(x)} dx$. It is the number of times that $f(\gamma(t))$ goes in counterclockwise direction around the origin.}
 $w(a) \in \BZ$ of $f(x)$
along the straight path from $a \in \BR$ to $a + \tau$.
We claim that $w(a) = -1$ for $0 < a \ll 1$, and $w(a)=1$ for some $a<1$ sufficiently close to $1$.
(To see this, note that $w(a)$ is locally constant in $a$ and only jumps when the vertical line crosses either a zero or a pole of $f$, where it jumps by $1$ when we cross a zero from left to right, and by $-1$ when we cross a pole. Since $f$ is $1$-periodic, $w(a)$ is $1$-periodic in $a$,
and since $f$ is even, one has that $w(-a) = -w(a)$. Hence for $\epsilon$ sufficiently small we obtain $w(1-\epsilon) = w(-\epsilon) = w(\epsilon) + 2$ and $w(1-\epsilon) = -w(\epsilon-1) = -w(\epsilon)$, and hence $w(\epsilon) + 2 = - w(\epsilon)$, so $w(\epsilon)=-1$.)
Further, let
$w'(b)$ be the winding number of $f$ when moving from $ b\tau$ to $b \tau + 1$ along a straight horizontal line.
For $q=0$ one easily sees that $w'(b) = 0$ for $b>>0$.
Hence, for $|q| \ll 1$, we have that $w'(b) = 0$ for $b$ close to $1/2$.

We conclude that for $|q| \ll 1$, the function $g(x)$ (as defined away from its branch cut)
satisfies $g(x+1) = g(x)$ but $g(x+\tau) = -g(x)$.
Hence we find the evaluation
\begin{equation} \label{dadf}
\int_{\gamma} g(x)^{2k+1} dx = 2 \int_{a_0}^{a_0 + 1} g(x)^{2k+1} dx = 2 \left[ \sqrt{ \wp(p,q) - 4G_2(q) }^{2k+1} \right]_{p^0}.
\end{equation}

On the other hand, we may apply the Cauchy integral formula to the integral $\int_{\gamma} g(x)^{2k+1} dx$.
The contribution from the pole at $x=0$ is the residue
\[ \frac{1}{2 \pi i} \mathrm{Res}_{x=0} g(x)^{2k+1} = \mathrm{Res}_{z=0} (\wp - 4 G_2)^{k+\frac{1}{2}}. \]
The contribution from the branch cut is given by
\[ I = \pm 2 \int_{x_1}^{x_2} g(x)^{2k+1} dx \]
where we choose one of the two branches of $g(x)$ in a neighbourhood of the line between the zeros $x_1, x_2$.
We will show that for $|q| \ll 1$ we have $I = O(q^{k+1})$.

Consider the asymptotic expansion of $f(x)$ near $x=1/2$ in $q$:
\[ f(x) = \frac{1}{4} \frac{(p+1)^2}{(p-1)^2} + (p - 6 + p^{-1}) q + \ldots \]
where $p=e^{2 \pi i x}$. 
We find the asymptotic expansion of the zeros $x_1, x_2$ to be
\[ x_i = \frac{1}{2} \pm \frac{4 i \sqrt{2}}{\pi} q^{1/2} + \ldots \]
where the higher order terms are multiples of $q^{3/2}, q^{5/2}$ etc.
(Note that since $f(-x) = f(x)$ and $f(x+1) = f(x)$, we have $f(1/2 - x) = f(1/2+x)$.)
The function $g(x)$ has the asymptotic expansion \eqref{expansion_g}.
Hence if we set $x=\frac{1}{2} + 2 \pi i T$ so that $p=- e^{T}$ we obtain the expansion
\[ g\left(\frac{1}{2} + T \right) = T h_0(T) + \frac{h_1(T)}{T} q + \frac{ h_2(T) }{T^3} q^2 + \ldots \]
where $h(T)$ are power series in $T$.
Hence
\[ g\left(\frac{1}{2} + T \right)^{2k+1} = T^{2k+1} \tilde{h}_0(T) + T^{2k-1} \tilde{h}_1(T) q + T^{2k-3} \tilde{h}_2(T) q^2 + \ldots \]
for some power series $\tilde{h}_i(T)$.
We obtain that
\[
\pm 2 \int_{x_1}^{x_2} g(x)^{2k+1} dx = \int_{-\frac{4i \sqrt{2}}{\pi} \sqrt{q}}^{\frac{4i \sqrt{2}}{\pi} \sqrt{q}} 
\left( T^{2k+1} \tilde{h}_0(T) + T^{2k-1} \tilde{h}_1(T) q + T^{2k-3} \tilde{h}_2(T) q^2 + \ldots \right)  dT + \ldots
= O(q^{k+1}),
\]
where $\ldots$ stands for terms of higher order in $q$.
Combining \eqref{dadf} with the Cauchy integral formula gives
\[
\int_{\gamma} g(x)^{2k+1} dx = \mathrm{Res}_{z=0} (\wp - 4 G_2)^{k+\frac{1}{2}} + \pm 2 \int_{x_1}^{x_2} g(x)^{2k+1} dx \]
and hence completes the claim.
\end{proof}

\subsection{Characterization of the $B$ and $C$ series}
The characterization of $B_k(q)$ is similar:
\begin{prop} \label{prop:Char B}
The series $B_k(q)$, $k\geq 0$ defined by \eqref{B} is the unique series of power series which satisfies the following conditions:
\begin{enumerate}[itemsep=0pt]
\item[(a)] $B_k(q)$ is a quasi-modular form of weight $2k+4$ satisfying $\frac{d}{dG_2} B_k = 2 B_{k-1} - 2 A_{k+1}$.
\item[(b)] For every $n \geq 0$ there exists a polynomial $q_n(k)$ of degree $2n-2$ such that
for every $k \geq n-1$ we have
\[ \Big[ B_k(q) \Big]_{q^n} = \frac{k!}{(2k+1)! (-2)^k} q_n(k). \]
\item[(c)] $B_0 = q + O(q^2)$.
\end{enumerate}
\end{prop}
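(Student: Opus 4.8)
The plan is to reduce everything to the already-established characterization of the $A$-series (Theorem~\ref{thm:Characterization2}) by means of a single closed-form identity. Writing $\wp+2G_2 = (\wp-4G_2) + 6G_2$ inside the residue \eqref{B}, using the $z$-linearity of $\mathrm{Res}_{z=0}$, and substituting $A_j = \frac{(-1)^j}{(2j+1)!!}\mathrm{Res}_{z=0}(\wp-4G_2)^{j+\frac12}$, the term $(\wp-4G_2)^{k+\frac52}$ produces $(2k+5)A_{k+2}$ and the term $6G_2(\wp-4G_2)^{k+\frac32}$ produces $-6G_2 A_{k+1}$, after the double-factorial and sign bookkeeping collapses. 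This yields
\[ B_k = (2k+5)\,A_{k+2} - 6\,G_2\,A_{k+1}, \]
which one checks against $B_0 = 5A_2 - 6G_2 A_1 = -2G_2^2 + \tfrac56 G_4$. All three properties will be read off from this identity.

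Properties (a) and (c) are then immediate. The summands $(2k+5)A_{k+2}$ and $G_2 A_{k+1}$ both have weight $2k+4$, giving the weight claim. For the recursion I differentiate using $\frac{d}{dG_2}A_j = 2A_{j-1}$ and the product rule, obtaining $\frac{d}{dG_2}B_k = (4k+4)A_{k+1} - 12 G_2 A_k$; expanding $B_{k-1} = (2k+3)A_{k+1} - 6G_2 A_k$ shows this equals $2B_{k-1} - 2A_{k+1}$. Property (c) follows by reading $B_0 = -2G_2^2 + \tfrac56 G_4 = q + O(q^2)$ off the $q$-expansions of $G_2, G_4$.

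The substance is property (b). Let $p_n(k)$ be the degree-$2n$ polynomials of Theorem~\ref{thm:Characterization2}(b), so $[A_j]_{q^n} = \frac{1}{(-4)^j(2j+1)!!}p_n(j)$ for $j\ge n$. Substituting the identity and expanding $[G_2 A_{k+1}]_{q^n}$ via $G_2 = -\tfrac1{24} + \sum_{m\ge1}\sigma_1(m)q^m$, and collecting the common prefactor $\frac{1}{(-4)^k(2k+3)!!} = \frac{1}{2k+3}\cdot\frac{k!}{(2k+1)!(-2)^k}$, I would obtain for $k\ge n-1$ (the binding constraint being the term $-\tfrac1{24}[A_{k+1}]_{q^n}$)
\[ [B_k]_{q^n} = \frac{k!}{(2k+1)!(-2)^k}\cdot\frac{R_n(k)}{2k+3}, \]
where
\[ R_n(k) = \tfrac1{16}\bigl(p_n(k+2)-p_n(k+1)\bigr) + \tfrac32\sum_{m=1}^{n}\sigma_1(m)\,p_{n-m}(k+1). \]
Since $p_n$ has degree $2n$, the difference has degree $2n-1$ and the sum degree $2n-2$, so $R_n$ has degree $2n-1$; thus it suffices to prove $(2k+3)\mid R_n(k)$, whereupon $q_n(k) := R_n(k)/(2k+3)$ is automatically a polynomial of degree $2n-2$, which is exactly (b). The divisibility amounts to the single identity $R_n(-\tfrac32)=0$. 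Here the key input is that the polynomials $p_n$ are evaluable at half-integers through Lemma~\ref{lemma:p0 coefficient}: one has $p_n(k) = 2^{2k+1}\bigl[\sqrt{\wp(p,q)-4G_2}^{\,2k+1}\bigr]_{p^0 q^n}$ as an identity of polynomials in $k$ (the $p_n$ of the lemma agreeing with those of the characterization, by the Proposition), so specializing the exponent to the even integers $2k+1\in\{0,2\}$ gives $p_n(-\tfrac12) = [1]_{p^0 q^n} = \delta_{n,0}$ and $p_n(\tfrac12) = 4[\wp-4G_2]_{p^0 q^n}$. From the Fourier expansion of $\wp-4G_2$ one computes $[\wp-4G_2]_{p^0 q^n} = -6\sigma_1(n)$ for $n\ge1$, hence $p_n(\tfrac12) = -24\sigma_1(n)$. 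Since only the $m=n$ term of the sum survives at $k+1=-\tfrac12$, this gives for $n\ge1$
\[ R_n(-\tfrac32) = \tfrac1{16}\bigl(p_n(\tfrac12)-p_n(-\tfrac12)\bigr) + \tfrac32\sigma_1(n) = -\tfrac{24}{16}\sigma_1(n) + \tfrac32\sigma_1(n) = 0, \]
while $R_0\equiv0$ is trivial. I expect this special-value computation to be the main obstacle, as it is the only step requiring one to leave the integer regime in which the $p_n$ are geometrically defined.

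Finally, uniqueness follows verbatim from the argument of Lemma~\ref{lemma:uniqueness}. Since the $A_k$ are already uniquely determined, the recursion in (a) fixes $\frac{d}{dG_2}B_k$ in terms of $B_{k-1}$, so $B_k$ is determined up to a genuine modular form in $\Mod_{2k+4}$. By Lemma~\ref{Modconstraint} this ambiguity is pinned down by the coefficients $[B_k]_{q^n}$ for $n\le\lfloor(2k+4)/12\rfloor$, and these are supplied by (b) upon interpolating the degree-$(2n-2)$ polynomial $q_n$ from its known values $q_n(k')$, $n-1\le k'<k$; there are $k-n+1$ such values against the $2n-1$ needed, and $k\ge 3n-2$ holds throughout the relevant range $n\le\lfloor(2k+4)/12\rfloor$, so the interpolation is determined. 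The base case $B_0$ is fixed by (c).
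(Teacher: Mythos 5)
Your proof is correct, but it takes a genuinely different route from the one the paper intends. The paper dismisses the proof as ``similar to the proof of Theorem~\ref{thm:Characterization2}'', i.e.\ one is supposed to rerun the contour-integral comparison between $\mathrm{Res}_{z=0}$ and $[\,\cdot\,]_{p^0}$ directly on the more complicated residue $(\wp-4G_2)^{k+\frac32}(\wp+2G_2)$, including the asymptotic analysis near the branch cut. You instead prove the closed identity $B_k=(2k+5)A_{k+2}-6G_2A_{k+1}$ (which I checked: it reproduces $B_0=-2G_2^2+\tfrac56G_4$ and the stated recursion $\tfrac{d}{dG_2}B_k=2B_{k-1}-2A_{k+1}$) and then deduce (a)--(c) entirely from the already-proved characterization of the $A_k$. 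This buys a cleaner argument for (a) and (c), avoids a second contour computation, and exposes why the range $k\ge n-1$ and the denominator $2k+3$ appear. The price is the half-integer evaluation $p_n(-\tfrac12)=\delta_{n,0}$, $p_n(\tfrac12)=-24\sigma_1(n)$, which is the one step you should spell out more carefully: it rests on the fact that the multinomial argument in Lemma~\ref{lemma:p0 coefficient} actually yields a polynomial identity $[h^K]_{p^0q^n}=P_n(K)$ for \emph{all} non-negative integer exponents $K$, not only the odd ones $K=2k+1$, so that the interpolating polynomial may legitimately be evaluated at $K=0,2$. Moreover the sign of $p_n(-\tfrac12)$ depends on which region ($|p|<1$ versus $|p|>1$, equivalently which branch of $\tfrac{p+1}{p-1}=\mp(1+2p^{\pm1}+\cdots)$) is used to define $[\,\cdot\,]_{p^0}$; the paper is ambiguous here, and the wrong choice would give $p_n(-\tfrac12)=-\delta_{n,0}$ and break the cancellation $R_n(-\tfrac32)=0$. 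Your values are the correct ones --- e.g.\ from $[A_k]_{q^1}$ for $k=1,2,3$ one finds $p_1(k)=8(2k+1)(k-2)$, so $p_1(-\tfrac12)=0$ and $p_1(\tfrac12)=-24=-24\sigma_1(1)$ --- but the justification deserves a sentence pinning down the convention. The uniqueness argument and the degree bookkeeping ($R_n$ of degree $2n-1$ divisible by $2k+3$, hence $q_n$ of degree $2n-2$) are fine.
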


\begin{proof}
The proof follows along the lines of the proof of Theorem~\ref{thm:Characterization2}. The uniqueness is completely parallel. To check that $B_k$ satisfies (a) and (c) is again straightforward. To show (b) one uses the same residue argument to show that
\[
\mathrm{Res}_{z=0} \left( (\wp - 4 G_2)^{k+\frac{3}{2}} (\wp + 2 G_2)\right) = 2 \left[ \sqrt{ \wp(p,q) - 4G_2(q) }^{2k+3} (\wp(p,q)+2 G_2(q)) \right]_{p^0} + O(q^{k+2})
\]
where $O(q^{k+2})$ appears instead of $O(q^{k+1})$ here because we integrate $\wp -4 G_2$ with exponent $k+3/2$ instead of $k+1/2$.
It remains to show the desired polynomiality of the first term on the right. Since
$\wp(p,q)+2 G_2(q)$ has constant term zero,
the application of Lemma~\ref{lemma:combinatorics}(b) shows that there exists polynomials $p_n(k)$ of degree $2n-1$ such that for all $k$:
\[ \left[ (\wp(p,q) - 4G_2(q) )^{k+\frac{3}{2}} (\wp(p,q) + 2 G_2(q)) \right]_{p^0} = \frac{1}{2^{2k}}\sum_{n \geq 0} p_n(k) q^n. \]
Moreover, since we have $[\wp(p,q)+2 G_2]_{p^0}=0$ this expression vanishes whenever $k=-3/2$, so the polynomial $p_n(k)$ is divisible by $2k+3$. The claim follows.
\end{proof}

\begin{proof}[Proof of Theorem~\ref{thm:Point insertions}]
By Lemma~\ref{lemma:splitting formula} we know that
\[
\left\langle \prod_{i=1}^{n} \tau_{k_i}(\pt) \right\rangle^{\GW}
=
\frac{\widetilde{B}_{k_1}(q) \cdots \widetilde{B}_{k_n}(q)}{\Delta(q)},
\]
for some power series $\widetilde{B}_k(q)$.
If we know (i) then it follows that $\widetilde{B}_k(q)$ satisfies property (b) of Proposition~\ref{prop:Char B}.
Propoerty (a) follows immediately from \eqref{thm:HAE}.
Property (c) is clear. Hence one gets $\widetilde{B}_k = B_k$.
The converse direction is also by Proposition~\ref{prop:Char B}.
\end{proof}

We also record the properties of $C_{k \ell}$.
Since by definition we have $C_{k0} = C_{0k}= B_{k-1}$ we will assume $k, \ell \geq 1$.
\begin{prop} \label{prop:Char C}
The series $C_{k \ell}(q)$, $k, \ell\geq 1$ defined by \eqref{C} satisfies the following conditions:
\begin{enumerate}[itemsep=0pt]
\item[(a)] $C_{k \ell}$ is a quasi-modular form of weight $2k+2 \ell +2$ satisfying
\[
\frac{d}{dG_2} C_{k \ell} = 2 C_{k-1,\ell} + 2 C_{k, \ell-1} - 2 A_k A_\ell.
\]
\item[(b)] For every $n \geq 0$ there exists a polynomial $P_n(k,\ell)$ of degree $2n-2$ such that for all $k,\ell \geq n-1$:
\[ [ C_{k,\ell} ]_{q^n} = \frac{1}{(-4)^{k} (2k-1)!! (-4)^{\ell} (2 \ell-1)!!} P_n(k,\ell). \]
\item[(c)] For every $\ell \geq 1$ and every $n \geq 0$ there exists a polynomial $p_n(k)$ of degree $2n-2$ such that for all $k \geq n-1$:
\[ [ C_{k,\ell} ]_{q^n} = \frac{1}{(-4)^{k} (2k-1)!!}p_n(k). \]
\item[(d)] $C_{k \ell} = O(q)$. 
\item[(e)] $C_{k \ell} = C_{\ell k}$ for all $k,\ell$.
\end{enumerate}
The $C_{k\ell}$ are uniquely determined by properties (a,c,e).
\end{prop}
\begin{proof}
The uniqueness is straightforward (determine first $C_{k1}$ for all $k$, then $C_{k2}$ for all $k$, and so on).
Conversely, (a,e) is clear and (d) follows from (b) using $n=0$.
To show (b) one argues as in the proof of Proposition~\ref{prop:Char B} (in particular, using that for fixed $z_2$, the $p^0$-coefficient of $\wp(z_1-z_2) + 2 G_2$ vanishes).
For part (c) one argues similarly using Lemma~\ref{lemma:combinatorics2} below, which is an analogue of Lemma~\ref{lemma:combinatorics} and whose proof is left to the reader.
\end{proof}
\begin{lemma} \label{lemma:combinatorics2}
\begin{enumerate}
\item[(a)] Let $f,g \in 1+qR[[q]]$ power series with constant term one and coefficients in a ring $R$.
Then $[f^k g^l]_{q^n}$ is a polynomial in $k,l$ of degree $\leq n$.
\item[(b)] Let $f(p_1,q) \in \BC[[p_1,q]]$ and $g(p_2,q) \in \BC[[p_2,q]]$ be power series, both with constant term one, and let $h(p_1,p_2,q) \in \CC[[p_1,p_2,q]]$ be any power series. Then the coefficient $[f^k g^l h]_{p_1^{m_1} p_2^{m_2} q^n}$ is a polynomial in $k,\ell$ of degree $\leq m_1+m_2+n$, and of degree $\leq m_1+m_2+n-1$ if $h$ has zero constant term.
\end{enumerate}
\end{lemma}

\begin{rmk}
Using the above characterizations of the functions $A,B,C$
one can show that
Conjecture~\ref{conj:GW stationary}
implies Conjecture~\ref{conj:polynomiality}
in the stationary case.
\end{rmk}

\subsection{An upgrade of Conjecture~\ref{conj:polynomiality}}
\label{subsec:upgraded polynomiality}
We record here the following conjectural strengthening of the polynomiality property:

\begin{conj}[Upgrade of Conjecture~\ref{conj:polynomiality}]
For any subsets $I_x \subset \{ 1, \ldots, x \}$ for $x \in (r, s ,t ,u )$,
and for $k_i, \ell_i, m_i, n_i$ fixed whenever $i$ does not lie in $I_r, I_s, I_t, I_u$ respectively (and satisfying $k_i, m_i \geq 1$),
there exists a polynomial $p$ of degree $\beta^2+2-2u-t+r$ such that
for all $k_i, \ell_i, m_i, n_i$ satisfying \eqref{polynomial range} for $i$ in $I_r,I_s,I_t, I_u$ we have
\[
\left\llangle 
\prod_{i=1}^{r} \tau_{k_i}(1) 
\prod_{i=1}^{s} \tau_{\ell_i}(\beta)
\prod_{i=1}^{t} \tau_{m_i}(\delta_i)
\prod_{i=1}^{u} \tau_{n_i}(\pt)
\right\rrangle^S_{\beta}
=
p\left( (k_{i} )_{i \in I_r}, ( \ell_i )_{i \in I_{s}}, (m_i)_{i \in I_t}, (n_i)_{i \in I_u} \right).
\]
%
\end{conj}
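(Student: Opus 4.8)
The plan is to derive the upgraded polynomiality from Conjecture~\ref{conj:GW stationary} together with the recursion of Section~\ref{sec:descendents of 1}, upgrading the observation that Conjecture~\ref{conj:GW stationary} implies Conjecture~\ref{conj:polynomiality} in the stationary case in two directions: to accommodate the descendent-of-$1$ factors, and to the partial setting in which only the indices lying in $I_r, I_s, I_t, I_u$ are allowed to vary. By Theorem~\ref{thm:dependence} I pass to the elliptic K3 surface of Section~\ref{subsec:elliptic K3}, taking $\beta = W + dF$ with $d = \beta^2/2$ and $\delta_i \in V$, and I fix auxiliary isotropic classes $\alpha_1, \alpha_2 \in V$ as required by Theorem~\ref{thm:recursion}. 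I first use Proposition~\ref{prop:Removing W} to trade each $\tau_{\ell_i}(\beta)$ (equivalently each $W$-insertion, since $\beta = W + dF$) for insertions supported on $F$ and the $\alpha$-classes, reducing to invariants of the form treated by the recursion.

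In the stationary case $r=0$, extracting the bracket from the elliptic form of Conjecture~\ref{conj:GW stationary} writes the un-normalized invariant as $\mathrm{Coeff}_{q^{\beta^2/2}}$ of $\Delta(q)^{-1}$ times a fixed polynomial in the series $A_{\ell_i}$, $B_{n_i}$ and $C_{m_i m_j}$. Since $\Delta^{-1}$ begins in degree $q^{-1}$, only the coefficients $[\,\cdot\,]_{q^n}$ with $n \leq \beta^2/2 + 1$ occur, and the normalizing factors in the definition of $\llangle\,\cdot\,\rrangle$ are exactly those cancelling the denominators in the partial polynomiality statements (b) of Theorems~\ref{thm:Characterization2}, \ref{prop:Char B} and~\ref{prop:Char C}. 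Each such coefficient therefore becomes a polynomial in its index (or pair of indices), provided that index exceeds the $q$-degree it carries. The decisive feature for the \emph{partial} statement is that these polynomiality properties hold separately in each index: a factor kept at a fixed (possibly small) index contributes only a definite scalar to each $[\,\cdot\,]_{q^n}$, whereas a factor at a varying index in the range \eqref{polynomial range} contributes a polynomial. One checks that the largest $q$-degree a given varying factor can carry is attained when every other factor carries its minimal degree (each $B$- and each $C$-factor at least $q^1$, each $A$-factor at least $q^0$), and that the resulting threshold matches \eqref{polynomial range} on the nose; this is the source of the numbers in \eqref{polynomial range}.

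To eliminate the factors $\tau_{k_i}(1)$ I would apply the recursion of Theorem~\ref{thm:more general example of recursion} (in general Proposition~\ref{prop:Removing 1}) one insertion at a time. Each removal rewrites the bracket as a finite $\BQ$-linear combination of brackets with one fewer genuine descendent of $1$, in which the removed index $k$ enters either through a descendent shifted by $1$, $2$ or $3$ (as in the $\tau_{k-1}(F)$, $\tau_{k-2}(\pt)$, $\tau_{k-3}(\pt)$ terms of Theorems~\ref{thm:Example 1} and~\ref{thm:Example 2}) or linearly inside a combinatorial coefficient; any reintroduced $\tau_0(1)$ is cleared by the string equation of Lemma~\ref{lemma:divisor string dilaton}. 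If $i \in I_r$ this produces a coefficient linear in $k_i$ multiplying a stationary bracket whose indices remain polynomially controlled, so polynomiality in $k_i$ is preserved; if $i \notin I_r$ the index is held fixed and only numerical coefficients appear. The maximal shift by $3$ in the $\tau_{k-3}(\pt)$ term is precisely what forces the extra $+3$ in the range \eqref{polynomial range} for the $k_i$, and the linear coefficient generated at each removal of a varying $\tau(1)$ contributes a $+1$ to the degree.

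Assembling the pieces, the normalized bracket becomes a finite sum of products of polynomials in the varying indices, hence a polynomial $p$. For the degree I would track the total $q$-budget $\beta^2/2 + 1$ supplied by $\Delta^{-1}$: an $A$-factor carrying $q$-degree $a$ contributes degree $2a$, a $B$- or $C$-factor contributes $2a-2$, and a former varying $\tau(1)$-factor contributes $2a+1$; summing under the constraint $\sum a = \beta^2/2+1$ gives total degree $\beta^2 + 2 - 2u - t + |I_r|$, which equals the asserted $\beta^2 + 2 - 2u - t + r$ when every $\tau(1)$ varies and is bounded by it in general. The main obstacle is the uniform bookkeeping of the recursion across its bounded but coupled iterations: one must verify that the off-diagonal classes $\alpha_1, \alpha_2$ generated along the way pair only into $C$-series, so that their two-variable partial polynomiality (Proposition~\ref{prop:Char C}) suffices; that no varying index is ever shifted out of the range in which the relevant $A$-, $B$- or $C$-coefficient is polynomial; and that the cross-terms coupling distinct varying indices remain jointly polynomial. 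Securing this simultaneous control, rather than any individual residue or modular computation, is where the real work lies.
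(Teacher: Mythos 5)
The statement you are trying to prove is a \emph{conjecture} in the paper: the author offers no proof of it, only numerical evidence (the examples in Section~\ref{subsec:upgraded polynomiality} and the appendix) and, in a remark after Proposition~\ref{prop:Char C}, the much weaker assertion that Conjecture~\ref{conj:GW stationary} implies the original Conjecture~\ref{conj:polynomiality} \emph{in the stationary case only}. So there is no proof in the paper to match your argument against, and your proposal cannot be a proof of the statement as posed: at minimum it is conditional on Conjecture~\ref{conj:GW stationary}, which is itself open, and the polynomiality of the Gromov--Witten side is exactly what is being conjectured, not something derivable from the paper's theorems.

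Even read as a conditional derivation (assuming Conjecture~\ref{conj:GW stationary}), your outline has concrete gaps beyond the bookkeeping you flag yourself. First, Theorem~\ref{thm:recursion} carries a nontrivial hypothesis: the existence of $2r$ pairs of isotropic classes $\alpha_1^{(i)},\alpha_2^{(i)}$ orthogonal to $\beta$, to all the $\delta_i$, and to each other. Since $H^2(S)$ has rank $22$, this fails once $r$ or $t$ is large (the paper notes the hypothesis holds for $r\le 5$ when $\gamma_i\in\{1,\beta,\pt\}$), whereas the upgraded conjecture has no such restriction; your route therefore cannot cover all cases of the statement. Second, the partial-variation aspect is not actually delivered by the characterization results you invoke: Proposition~\ref{prop:Char C}(b) gives polynomiality of $[C_{k\ell}]_{q^n}$ only when \emph{both} $k,\ell\ge\max(1,n-1)$, but the upgraded conjecture allows, say, $m_j$ fixed at a small value (down to $m_j=1$) while $m_i$ varies, which requires one-variable polynomiality of $[C_{k\ell}]_{q^n}$ in $k$ with $\ell$ fixed small --- a property nowhere established in the paper (and the paper's own example $\langle\tau_k(\pt)\tau_0(\pt)\rangle$ shows the polynomial genuinely changes when an index leaves the range, so this needs a real argument, not continuity of the same polynomial). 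Third, the recursion's side terms (the $\tau_0(1)$ cleared by the string equation, and the shifts $\tau_{k-1},\tau_{k-2},\tau_{k-3}$) lower \emph{other} insertions' indices as well, so a varying index of a different factor can be pushed below its threshold in \eqref{polynomial range}; your claim that the ``$+3$'' exactly absorbs this is plausible numerology but unverified, and it is precisely the uniform control whose absence presumably kept this statement at the level of a conjecture.
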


\begin{example} For $k,\ell \geq \beta^2/2 + 3$ we have:
\begin{align*} \llangle \tau_{k}(1) \tau_{\ell}(1) \rrangle^S_{\beta^2/2=-1}
& = 2 {\left(k + l - 3\right)} {\left(2  k + 2  l - 5\right)} \\
\llangle \tau_{k}(1) \tau_{\ell}(1) \rrangle^S_{\beta^2/2=0}
& = 16 {\left(k + l - 3\right)} \big(4  k^{3} + 4  k^{2} l + 4  k l^{2} + 4  l^{3} \\
& \quad - 16  k^{2} - 12  k l - 16  l^{2} + 29  k + 29  l - 29\big) \\
\llangle \tau_{k}(1) \tau_{\ell}(1) \rrangle^S_{\beta^2/2=1}
& =
8 {\left(k + l - 3\right)} \big(32  k^{5} + 32  k^{4} l + 128  k^{3} l^{2} + 128  k^{2} l^{3} + 32  k l^{4} + 32  l^{5} \\
& - 336  k^{4} - 448  k^{3} l - 704  k^{2} l^{2} - 448  k l^{3} - 336  l^{4} + 1392  k^{3} + 1328  k^{2} l \\
& + 1328  k l^{2} + 1392  l^{3} - 2236  k^{2} - 1624  k l - 2236  l^{2} + 1780  k + 1780  l - 1049\big)
\end{align*}
On the other hand, for $\ell < 3$ we have polynomiality only in $k$ for $k \geq \beta^2/2+3$.
For example,
\begin{align*}
\llangle \tau_{k}(1) \tau_{1}(1) \rrangle^S_{\beta^2/2=0}
& = 32 \, {\left(k^{2} - 2 \, k + 3\right)} {\left(2 \, k - 3\right)} {\left(k - 1\right)} 
&& = P(k,1) + 32 \, k^{3} - 112 \, k^{2} + 192 \, k - 96 \\
\llangle \tau_{k}(1) \tau_{2}(1) \rrangle^S_{\beta^2/2=0}
& = 16 \, {\left(2 \, k^{3} - 5 \, k^{2} + 12 \, k - 6\right)} {\left(2 \, k - 1\right)}
&& = P(k,2) + 48 \\
\llangle \tau_{k}(1) \tau_{3}(1) \rrangle^S_{\beta^2/2=0}
& = 16 \, {\left(4 \, k^{3} - 4 \, k^{2} + 29 \, k + 22\right)} k 
&& = P(k,3)
\end{align*}
where
\[ P(k,l) = 16 {\left(k + l - 3\right)} \big(4  k^{3} + 4  k^{2} l + 4  k l^{2} + 4  l^{3} 
- 16  k^{2} - 12  k l - 16  l^{2} + 29  k + 29  l - 29\big) \]
is the polynomial answer for $k, \ell \geq 3$.
By definition we have excluded here the case $\ell=0$.
%
\end{example}

\appendix

\section{Further examples}
We list some more computations for Gromov--Witten invariants
in the polynomial range.
This assumes Conjecture~\ref{conj:GW stationary} (in order that we can apply our algorithm of Theorem~\ref{thm:recursion}),
and Conjecture~\ref{conj:polynomiality} to get a bound on the degree of the polynomial.
However, the computations are also always a check on the polynomiality since we computed more terms than was required to fix the degree of the polynomial.

\begin{example} For $k,\ell \geq \beta^2/2 - 1$ we have:
\begin{align*} 
\llangle \tau_{k}(\pt) \tau_{\ell}(\pt) \rrangle^S_{\beta^2/2=1} & = 1 \\
\llangle \tau_{k}(\pt) \tau_{\ell}(\pt) \rrangle^S_{\beta^2/2=2} & = 
8  k^{2} + 8  l^{2} - 12  k - 12  l + 20 \\
\llangle \tau_{k}(\pt) \tau_{\ell}(\pt) \rrangle^S_{\beta^2/2=3} 
& = \frac{64}{3}  k^{4} + 64  k^{2} l^{2} + \frac{64}{3}  l^{4} - \frac{512}{3}  k^{3} - 96  k^{2} l - 96  k l^{2} - \frac{512}{3}  l^{3} + \frac{1712}{3}  k^{2} \\
& \quad + 144  k l + \frac{1712}{3}  l^{2} - \frac{1024}{3}  k - \frac{1024}{3}  l - 64
\end{align*}
\end{example}

\begin{example} 
Let $\alpha_1, \alpha_2 \perp \beta$ with $\alpha_1^2 = \alpha_2^2=0$ and $\alpha_1 \cdot \alpha_2=1$. For $k,\ell \geq \max(\beta^2/2,1)$ we have:
\begin{align*}
\llangle \tau_{k}(\alpha_1) \tau_{\ell}(\alpha_2) \rrangle^S_{\beta^2/2=0} & = -1/4 \\
\llangle \tau_{k}(\alpha_1) \tau_{\ell}(\alpha_2) \rrangle^S_{\beta^2/2=1} & = 
-2 \, k^{2} - 2 \, k l - 2 \, l^{2} + 7 \, k + 7 \, l - \frac{29}{2} \\
\llangle \tau_{k}(\alpha_1) \tau_{\ell}(\alpha_2) \rrangle^S_{\beta^2/2=2} & =  
-\frac{16}{3} \, k^{4} - \frac{32}{3} \, k^{3} l - \frac{64}{3} \, k^{2} l^{2} - \frac{32}{3} \, k l^{3} - \frac{16}{3} \, l^{4} + 64 \, k^{3} + \frac{368}{3} \, k^{2} l \\
& + \frac{368}{3} \, k l^{2} + 64 \, l^{3} - \frac{1016}{3} \, k^{2} - 432 \, k l - \frac{1016}{3} \, l^{2} + 678 \, k + 678 \, l - 606
\end{align*}
\end{example}

\begin{example} For $k, \ell \geq \beta^2/2$ we have:
\begin{align*}
\llangle \tau_{k}(\pt) \tau_{\ell}(F) \rrangle^S_{\beta^2/2=0} & = 1 \\
\llangle \tau_{k}(\pt) \tau_{\ell}(F) \rrangle^S_{\beta^2/2=1} & = 8  k^{2} + 16  l^{2} - 12  k - 24  l + 6 \\
\llangle \tau_{k}(\pt) \tau_{\ell}(F) \rrangle^S_{\beta^2/2=2} & =
\frac{64}{3}  k^{4} + 128  k^{2} l^{2} + 64  l^{4} - \frac{512}{3}  k^{3} - 192  k^{2} l - 192  k l^{2} - 512  l^{3} \\
& \quad + \frac{1376}{3}  k^{2} + 288  k l + 1344  l^{2} - \frac{520}{3}  k - 472  l - 512
\end{align*}
\end{example}

\begin{example}
For $k_1, k_2, k_3 \geq \beta^2/2+3$ we have
\begin{align*}
\llangle \tau_{k_1}(1) \tau_{k_2}(1) \tau_{k_3}(1) \rrangle^S_{\beta^2/2=-1}
& = 4  {\left(k_{1} + k_{2} + k_{3} - 4\right)} {\left(2  k_{1} + 2  k_{2} + 2  k_{3} - 7\right)} {\left(k_{1} + k_{2} + k_{3} - 3\right)} \\
\llangle \tau_{k_1}(1) \tau_{k_2}(1) \tau_{k_3}(1) \rrangle^S_{\beta^2/2=0}
& = 
32  \left(k_{1} + k_{2} + k_{3} - 4\right) \left(2  k_{1} + 2  k_{2} + 2  k_{3} - 7\right) (2  k_{1}^{3} + 2  k_{1}^{2} k_{2} + 2  k_{1} k_{2}^{2} \\
& + 2  k_{2}^{3} + 2  k_{1}^{2} k_{3} + 2  k_{2}^{2} k_{3} + 2  k_{1} k_{3}^{2} + 2  k_{2} k_{3}^{2} + 2  k_{3}^{3} - 9  k_{1}^{2} - 6  k_{1} k_{2} \\
& - 9  k_{2}^{2} - 6  k_{1} k_{3} - 6  k_{2} k_{3} - 9  k_{3}^{2} + 17  k_{1} + 17  k_{2} + 17  k_{3} - 21)  \\
\llangle \tau_{k_1}(1) \tau_{k_2}(1) \tau_{k_3}(1) \rrangle^S_{\beta^2/2=1}
& = 16  {\left(k_{1} + k_{2} + k_{3} - 4\right)} {\left(2  k_{1} + 2  k_{2} + 2  k_{3} - 7\right)} (16  k_{1}^{5} + 16  k_{1}^{4} k_{2} + 64  k_{1}^{3} k_{2}^{2} \\
& + 64  k_{1}^{2} k_{2}^{3} + 16  k_{1} k_{2}^{4} + 16  k_{2}^{5} + 16  k_{1}^{4} k_{3} + 64  k_{1}^{2} k_{2}^{2} k_{3} + 16  k_{2}^{4} k_{3} + 64  k_{1}^{3} k_{3}^{2} \\
& + 64  k_{1}^{2} k_{2} k_{3}^{2} + 64  k_{1} k_{2}^{2} k_{3}^{2} + 64  k_{2}^{3} k_{3}^{2} + 64  k_{1}^{2} k_{3}^{3} + 64  k_{2}^{2} k_{3}^{3} + 16  k_{1} k_{3}^{4} + 16  k_{2} k_{3}^{4} \\
& + 16  k_{3}^{5} - 176  k_{1}^{4} - 224  k_{1}^{3} k_{2} - 384  k_{1}^{2} k_{2}^{2} - 224  k_{1} k_{2}^{3} - 176  k_{2}^{4} - 224  k_{1}^{3} k_{3} \\
& - 192  k_{1}^{2} k_{2} k_{3} - 192  k_{1} k_{2}^{2} k_{3} - 224  k_{2}^{3} k_{3} - 384  k_{1}^{2} k_{3}^{2} - 192  k_{1} k_{2} k_{3}^{2} - 384  k_{2}^{2} k_{3}^{2}\\
&  - 224  k_{1} k_{3}^{3} - 224  k_{2} k_{3}^{3} - 176  k_{3}^{4} + 792  k_{1}^{3} + 744  k_{1}^{2} k_{2} + 744  k_{1} k_{2}^{2} + 792  k_{2}^{3} \\
& + 744  k_{1}^{2} k_{3} + 432  k_{1} k_{2} k_{3} + 744  k_{2}^{2} k_{3} + 744  k_{1} k_{3}^{2} + 744  k_{2} k_{3}^{2} + 792  k_{3}^{3} - 1402  k_{1}^{2}\\
&  - 980  k_{1} k_{2} - 1402  k_{2}^{2} - 980  k_{1} k_{3} - 980  k_{2} k_{3} - 1402  k_{3}^{2} + 1221  k_{1} \\
& + 1221  k_{2} + 1221  k_{3} - 873)
\end{align*}
\end{example}

Mathematisches Institut, Universit\"at Heidelberg

georgo@uni-heidelberg.de

    \end{document}